\newtheorem{theorem}{Theorem}[section]
\newtheorem{remark}{Remark}[section]
\newtheorem{definition}{Definition}[section]
\newtheorem{lemma}{Lemma}[section]
\newtheorem{corollary}{Corollary}[section]
\newtheorem{proposition}{Proposition}[section]
\DeclareMathOperator{\loc}{loc}
\title{ Global strong solutions for non-isothermal compressible nematic liquid crystal flows under a scaling-invariant smallness condition
\thanks{Xu's research was partially supported by Postgraduate Research an Innovation Project of Southwest University (No. SWUB25035). Zhong's research was partially supported by National Natural Science Foundation of China (No. 12371227) and Fundamental Research Funds for the Central Universities (No. SWU--KU24001).}
}
\author{Lin Xu,\ Xin Zhong{\thanks{E-mail addresses: mathxu@email.swu.edu.cn (L. Xu), xzhong1014@amss.ac.cn (X. Zhong).}}
\date{}\\
\footnotesize School of Mathematics and Statistics, Southwest University, Chongqing 400715, P. R. China}
\begin{document}
\maketitle
\begin{abstract}
	We study the three-dimensional Cauchy problem for a non-isothermal compressible nematic liquid crystal system with far-field vacuum. By deriving refined energy estimates and exploiting the coupled structure of the equations, we establish the global existence and uniqueness of strong solutions, provided that the following scaling-invariant quantity is sufficiently small:
	$$
	\big(1+\bar{\rho}+\tfrac{1}{\bar{\rho}}\big)
	\big[\|\rho_{0}\|_{L^{3}}+(\bar{\rho}^{2}+\bar{\rho})\big(\|\sqrt{\rho_{0}}u_{0}\|_{L^{2}}^{2}+\|\nabla d_{0}\|_{L^{2}}^{2}\big)\big]
	\big[\|\nabla u_{0}\|_{L^{2}}^{2}+(\bar{\rho}+1)\|\sqrt{\rho_{0}}\theta_{0}\|_{L^{2}}^{2}
	+\|\nabla^{2} d_{0}\|_{L^{2}}^{2}+\|\nabla d_{0}\|_{L^{4}}^{4}\big].
	$$
	In particular, our result identifies a new scaling-invariant quantity and does not impose additional restrictions on the viscosity coefficients, which improves previous work (Commun. Math. Sci. 21 (2023), 1455--1486).
\end{abstract}

\textit{Key words and phrases}. Compressible nematic liquid crystal flows; global strong solutions; scaling invariance; vacuum.

2020 \textit{Mathematics Subject Classification}. 35Q35; 76N10; 76A15.

%\tableofcontents

\section{Introduction}
Liquid crystals are materials whose mechanical behavior lies between that of ordinary liquids and solid crystals, with elongated molecules giving rise to a preferred local orientation. Among various phases, nematic liquid crystals are of particular interest and can be described at the macroscopic level by the continuum theory of Ericksen and Leslie \cite{E62,L68}, which couples the fluid velocity with the molecular orientation. In the static regime, this theory reduces to the classical Oseen-Frank elastic model \cite{O,F}. From the original equations in the theory of nematic liquid crystals, Lin \cite{L89} derived a simplified Ericksen-Lesile model. Since then there have been many results about mathematical analysis in this model and related models.

Under suitable simplifications, the Ericksen-Leslie framework leads to a nonlinear system coupling the compressible Navier--Stokes equations with an evolution equation for the director field. The latter can be interpreted as a transported heat flow of harmonic maps into the unit sphere (see, e.g., \cite{L95}). The resulting model is referred to as the non-isothermal compressible nematic liquid crystal (NLC) system
\begin{align}\label{1}
\begin{cases}
\rho_{t}+\operatorname{div}(\rho u)=0, \\
\rho u_{t}+\rho u\cdot\nabla u-\mu \Delta u-(\lambda+\mu)\nabla \operatorname{div} u+\nabla P=-\nabla d \cdot \Delta d, \\
c_{v}\rho\big(\theta_{t}+u\cdot \nabla \theta\big)+P\operatorname{div} u-\kappa \Delta \theta
=\mathcal{Q}(\nabla u)+\big|\Delta d+|\nabla d|^{2} d\big|^{2}, \\
d_{t}+u\cdot \nabla d=\Delta d+|\nabla d|^{2} d, \\
|d|=1.
\end{cases}
\end{align}
Here $\rho\ge 0$, $u\in\mathbb{R}^{3}$, $\theta\ge 0$, and $d\in\mathbb{S}^{2}$ denote the density, velocity, absolute temperature, and orientation field, respectively. The pressure is given by
$P=R\rho\theta$ with $R$ being a positive constant. Moreover,
\begin{align*}
\mathcal{Q}(\nabla u)=\frac{\mu}{2}\,|\nabla u+(\nabla u)^{\top}|^{2}+\lambda\,(\operatorname{div}u)^{2},
\end{align*}
where $(\nabla u)^{\top}$ denotes the transpose of $\nabla u$. The shear viscosity coefficient $\mu$ and bulk viscosity coefficient $\lambda$ satisfy the physical conditions
\begin{align*}
\mu>0,\quad 2\mu+3\lambda\ge 0.
\end{align*}
The positive constants $c_{v}$ and $\kappa$ represent the heat capacity and the heat conductivity coefficient, respectively. We consider the Cauchy problem for \eqref{1} with initial data
\begin{align}\label{2}
(\rho,u,\theta,d)\big|_{t=0}=(\rho_{0},u_{0},\theta_{0},d_{0}),\quad x\in\mathbb{R}^{3},
\end{align}
and the far-field behavior
\begin{align}\label{4}
(\rho,u,\theta,d)(x,t)\to(0,0,0,\mathbf{1}) \quad \text{as } |x|\to\infty,\ t>0,
\end{align}
where $\mathbf{1}$ is a fixed unit vector.

Temperature plays a fundamental role in the macroscopic behavior of nematic liquid crystals, as it affects both molecular orientation and its coupling with the surrounding flow. In non-isothermal models, it couples the velocity, director, and energy equations through heat conduction and viscous dissipation, and may also influence the pressure and material coefficients. For background on isothermal and non-isothermal liquid crystals, refer to \cite{FRSZ14,HP18} and references contained therein. In 2011, Feireisl--Rocca--Schimperna \cite{FRS} proposed  a non-isothermal incompressible NLC model, where the constraint $|d|=1$ is relaxed by a penalization term. Later, accounting for stretching of the director, the authors \cite{FFRS} introduced a new modeling framework and proved global-in-time existence of weak solutions with a Ginzburg--Landau type director equation. More recently, increasing attention has been devoted to the compressible non-isothermal NLC system, where the analysis becomes more involved due to density variations and their impact on the fluid mechanics. In this regards, Hieber and Pr\"uss \cite{HP16} derived a general Ericksen--Leslie model. Later on, they \cite{HP19} further established the local well-posedness in an $L^{q}$ framework and obtained global-in-time solutions near equilibria. Meanwhile, De Anna and Liu \cite{DL} proposed a thermally driven model and established the global well-posedness with small data in Besov spaces.

There are various contributions to simplified non-isothermal compressible NLC systems, including the low-Mach-number limit \cite{FL}, biaxial NLC flows \cite{LWZ}, decay rates \cite{GXX17}, blow-up criteria \cite{Z19,Z22}, and so on. For strong solutions to the Cauchy problem \eqref{1}--\eqref{4}, Liu and the second author \cite{LZ21} generalized the local well-posedness in \cite{FLN} to be a global one as long as $\|\rho_{0}\|_{L^{\infty}}+\|\nabla d_{0}\|_{L^{3}}$ is sufficiently small. Subsequently, they \cite{LZ26} obtained global strong solutions and algebraic decay rates provided that the total initial energy is small enough. Meanwhile, Shou and the second author \cite{SZsub} established global strong solutions once $\|\rho_{0}\|_{L^{1}}^{2}+\|\nabla d_{0}\|_{L^{2}}^{2}$ is properly small. It should be pointed that the smallness assumptions in \cite{SZsub,LZ21,LZ26} {\it depend on the initial data}. Recently, under a smallness condition that is \emph{independent of the initial data}, it was shown in \cite{LT23} the global existence and uniqueness of strong solutions to \eqref{1}--\eqref{4} provided that the viscosity coefficients satisfy an additional condition $2\mu>\lambda$. The smallness condition in \cite{LT23} is imposed on the quantity
\begin{align*}
N_{0}\triangleq \hat{\rho}\big[\|\rho_{0}\|_{L^{3}}+\hat{\rho}^{2}\big(\|\sqrt{\rho_{0}}u_{0}\|_{L^{2}}^{2}+\|\nabla d_{0}\|_{L^{2}}^{2}\big)\big]
\big[\|\nabla u_{0}\|_{L^{2}}^{2}+\hat{\rho}\big(\|\sqrt{\rho_{0}}E_{0}\|_{L^{2}}^{2}
+\|\nabla^{2}d_{0}\|_{L^{2}}^{2}\big)\big],
\end{align*}
where $E_{0}=\frac{|u_{0}|^{2}}{2}+c_{v}\theta_{0}$ and $\hat{\rho}=\|\rho_{0}\|_{L^{\infty}}+1$. Actually,
the quantity $N_{0}$ is scaling-invariant under the parabolic scaling
\begin{align}\label{sca}
\rho_{k}(x,t)\triangleq \rho(kx,k^{2}t),\quad
u_{k}(x,t)\triangleq k u(kx,k^{2}t),\quad
\theta_{k}(x,t)\triangleq k^{2}\theta(kx,k^{2}t),\quad
d_{k}(x,t)\triangleq d(kx,k^{2}t),
\end{align}
for any $k\neq 0$. More precisely, if $N_{0}\le \delta_{0}$ holds for the initial data $(\rho_{0},u_{0},\theta_{0},d_{0})$, then the rescaled initial data $(\rho_{k}(x,0),u_{k}(x,0),\theta_{k}(x,0),d_{k}(x,0))$ defined by \eqref{sca} also satisfy $N_{0}\le \delta_{0}$. From a physical viewpoint, the viscosity restriction $2\mu>\lambda$ is not natural, since for many fluids the bulk viscosity $\lambda$ can be hundreds or even thousands of times larger than the shear viscosity $\mu$ (see, e.g., \cite{C11,GA}). Therefore, removing this constraint is not only mathematically desirable but also physically relevant. The main goal of this paper is to establish a global well-posedness theory without imposing any additional condition on the viscosity coefficients. In particular, we introduce a new scaling-invariant quantity compatible with \eqref{sca}, and our analysis completely removes the assumption $2\mu>\lambda$ required in \cite{LT23}. These features constitute the main novelties of the present work.

Before stating our main result precisely, we describe the notation throughout. The material derivative of the velocity and the vorticity are defined by
\begin{align*}
\dot{u} \triangleq u_{t}+u \cdot \nabla u, \quad  \operatorname{curl} u\triangleq\nabla \times u.
\end{align*}
For an integer $k \geq 0$ and $1 \leq q \leq \infty$, the standard Sobolev spaces are defined as
\begin{align*}
&D^{k,q}(\mathbb{R}^{3}) = \{ u \in L^1_{\loc}(\mathbb{R}^{3}) \mid \|\nabla^k u\|_{L^q} < \infty \}, \quad \|u\|_{D^{k,q}} = \|\nabla^k u\|_{L^q},\quad \|  u \|_{L^{q}}=\|u \|_{L^{q}(\mathbb{R}^{3})}, \\
&W^{k,q}(\mathbb{R}^{3}) = L^q(\mathbb{R}^{3}) \cap D^{k,q}(\mathbb{R}^{3}), \quad D^k(\mathbb{R}^{3}) = D^{k,2}(\mathbb{R}^{3}), \quad H^k(\mathbb{R}^{3}) = W^{k,2}(\mathbb{R}^{3}).
\end{align*}
For simplicity, we set
\begin{align}\label{z1}
\int \cdot \, d x \triangleq \int_{\mathbb{R}^{3}} \cdot \, d x, \quad \kappa = c_{v} = R = 1.
\end{align}

Next, we give the definition of the strong solution to \eqref{1}--\eqref{4} throughout this paper.
\begin{definition}
(Strong solution) For  $T>0$, $(\rho, u, \theta, d)$  is called a strong solution to \eqref{1}--\eqref{4} in  $\mathbb{R}^{3} \times[0, T]$, if for some  $q \in(3,6]$,
\begin{align*}
&\rho \in C ([0, T] ; H^{1} \cap W^{1, q} ), \quad(u, \theta) \in C ([0, T] ; D_{0}^{1} \cap D^{2} ) \cap L^{2} (0, T ; D^{2, q} ),\\
&\nabla d \in C ([0, T] ; H^{2} ) \cap L^{2} (0, T ; H^{3} ), \quad \rho_{t} \in C ([0, T] ; L^{2} \cap L^{q} ), \quad (u_{t}, \theta_{t} ) \in L^{2} (0, T ; D_{0}^{1} ), \\
& (\sqrt{\rho} u_{t}, \sqrt{\rho} \theta_{t} ) \in L^{\infty} (0, T ; L^{2} ), \quad d_{t} \in C ([0, T] ; H^{1} ) \cap L^{2} (0, T ; H^{2} ), \quad|d|=1,
\end{align*}
and $(\rho, u, \theta, d)$ satisfies system \eqref{1} a.e. in $\mathbb{R}^{3} \times(0, T]$.
\end{definition}

The main result  in this paper can be stated as follows.

\begin{theorem}\label{thm1}
Let $q\in(3,6]$ and assume that the initial data $(\rho_{0}\ge 0,u_{0},\theta_{0}\ge 0,d_{0})$ satisfy
\begin{align}\label{01}
\rho_{0} \in H^{1} \cap W^{1,q},~
(\sqrt{\rho_{0}} u_{0}, \sqrt{\rho_{0}} \theta_{0} ) \in L^{2},~
(u_{0}, \theta_{0}) \in D_{0}^{1} \cap D^{2},~
\nabla d_{0} \in H^{2},~ |d_{0}|=1,
\end{align}
and the compatibility condition
\begin{align}\label{com}
\begin{cases}
-\mu \Delta u_{0}-(\lambda+\mu) \nabla \operatorname{div} u_{0}
+\nabla  ( \rho_{0} \theta_{0} )
+\nabla d_{0} \cdot \Delta d_{0}
=\sqrt{\rho_{0}} g_{1}, \\[1mm]
-  \Delta \theta_{0}
-\mathcal{Q}(\nabla u_{0})
-|\Delta d_{0}+|\nabla d_{0}|^{2} d_{0} |^{2}
=\sqrt{\rho_{0}}  g_{2},
\end{cases}
\end{align}
for some $g_{1},g_{2} \in L^{2}(\mathbb{R}^{3})$. There exists a small positive constant $\varepsilon_{0}>0$ depending only on the parameters $R, \mu, \lambda, \kappa$, and $c_{v}$ but independent of the initial data, such that if
\begin{align}\label{sca1}
S_{0}\triangleq \big(1+\bar{\rho}+\tfrac{1}{\bar{\rho}}\big)\,S_{0}^{\prime}S_{0}^{\prime\prime} \leq \varepsilon_{0},
\end{align}
then the Cauchy problem \eqref{1}--\eqref{4} admits a unique global strong solution $(\rho,u,\theta,d)$. Here $\bar{\rho}\triangleq \|\rho_{0} \|_{L^{\infty}}$,
\begin{gather*}
	S_{0}^{\prime}\triangleq \|\rho_{0}\|_{L^{3}}+(\bar{\rho}^{2}+\bar{\rho}) (\|\sqrt{\rho_{0}}u_{0}\|_{L^{2}}^{2}+\|\nabla d_{0}\|_{L^{2}}^{2}),\\[2mm]
	S_{0}^{\prime\prime}\triangleq \|\nabla u_{0}\|_{L^{2}}^{2}+(\bar{\rho}+1) \| \sqrt{\rho_{0}}\theta_{0}\|_{L^{2}}^{2}+\| \nabla^{2} d_{0}\|_{L^{2}}^{2}+\| \nabla d_{0}\|_{L^{4}}^{4}.
\end{gather*}
\end{theorem}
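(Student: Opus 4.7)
The plan is to combine the local well-posedness theorem from \cite{FLN} with time-uniform a priori estimates controlled solely by the scaling-invariant quantity $S_0$, and then extend the local solution by a continuation argument. I would fix candidate uniform bounds $2M_1$ for the running version of $S_0'$ and $2M_2$ for the running version of $S_0''$ (with appropriate time-integral supplements), together with $\|\rho(t)\|_{L^{\infty}}\le 2\bar\rho$; define $T^{*}$ as the maximal time on which these bounds hold; and aim to show that under $S_0\le\varepsilon_0$ they can actually be improved by a factor $\tfrac{3}{2}$, forcing $T^{*}=\infty$ by contradiction.

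\textbf{Key estimates, in order.} On $[0,T^{*}]$ I would proceed as follows. First, the basic energy identity for $(u,d)$, obtained by testing the momentum equation with $u$ and the director equation with $-(\Delta d+|\nabla d|^{2} d)$; the constraint $|d|=1$ converts the cross terms $\pm\int u^{j}\partial_{j} d^{i}\Delta d^{i}\,dx$ into an exact cancellation when the two are summed. Second, an $L^{\infty}$-bound on $\rho$ along the characteristics of the continuity equation via a Zlotnik-type ODE for $\log\rho$, using the effective viscous flux $F\triangleq(2\mu+\lambda)\operatorname{div}u-P$, together with an $L^{3}$-bound on $\rho$ obtained by multiplying $\rho_t+\operatorname{div}(\rho u)=0$ by $3\rho^{2}$ and estimating $\int\rho^{3}\operatorname{div}u\,dx$ through $F$ and the running bounds on $\|\rho\|_{L^{\infty}}$, $\|\rho\|_{L^{3}}$. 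Third, the first-order estimate for $\|\nabla u\|_{L^{2}}^{2}+(\bar\rho+1)\|\sqrt\rho\theta\|_{L^{2}}^{2}+\|\nabla^{2} d\|_{L^{2}}^{2}+\|\nabla d\|_{L^{4}}^{4}$, by combining the test of the momentum equation with $\dot u$, of the director equation with $-\Delta(\Delta d+|\nabla d|^{2} d)$, and of the temperature equation with $\theta$; the pressure work $\int\rho\theta\operatorname{div}u\,dx$ is absorbed by writing $\operatorname{div}u=(F+P)/(2\mu+\lambda)$ and using the smallness of $\|\sqrt\rho u\|_{L^{2}}$ and $\|\rho\|_{L^{3}}$. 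Fourth, higher-order estimates for $\|\sqrt\rho\dot u\|_{L^{2}}^{2}$ and $\|\sqrt\rho\theta_{t}\|_{L^{2}}^{2}$ by testing the $t$-differentiated momentum and temperature equations against $\dot u$ and $\theta_{t}$, with parabolic regularity for the director equation upgrading $\nabla d$ to $L^{\infty}_{t}H^{2}\cap L^{2}_{t}H^{3}$.

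\textbf{Main obstacle.} The delicate point, which forces the auxiliary hypothesis $2\mu>\lambda$ in \cite{LT23}, is the simultaneous control of the bulk-viscous term $\lambda(\operatorname{div}u)^{2}$ in $\mathcal{Q}(\nabla u)$ and of the pressure work $P\operatorname{div}u$ in the temperature equation. I would avoid any pointwise comparison between $\mu$ and $\lambda$ by working systematically with $F$ and with the Hoff--Helmholtz splitting $u=u_{F}+u_{\omega}$, where $-(2\mu+\lambda)\Delta u_{F}=-\nabla F-\nabla P-\nabla d\cdot\Delta d$ and $-\mu\Delta u_{\omega}=\mu\nabla\times\operatorname{curl}u$. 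This yields Calder\'on--Zygmund control of $\nabla u$ by $F$, $P$, $\operatorname{curl}u$ and the director coupling, independently of the ratio $\lambda/\mu$. Combined with the parabolic gains
\begin{equation*}
\Delta F=\operatorname{div}(\rho\dot u+\nabla d\cdot\Delta d),\qquad \mu\Delta\operatorname{curl}u=\operatorname{curl}(\rho\dot u+\nabla d\cdot\Delta d),
\end{equation*}
and the positivity of $\mu$ alone in the vorticity equation, all estimates close under the natural physical conditions $\mu>0$ and $2\mu+3\lambda\ge 0$. The smallness of $S_0$ is then used to absorb every nonlinear coupling---including the quadratic source $|\Delta d+|\nabla d|^{2}d|^{2}$ feeding the temperature and the convective term $\rho u\cdot\nabla u$ rebuilt from $\rho\dot u$---into the dissipation, producing the improved bootstrap bounds and hence global existence. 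Uniqueness follows from a standard $L^{2}$-stability argument in the obtained regularity class.
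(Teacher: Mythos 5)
Your overall framework (local well-posedness plus bootstrapped a priori estimates plus continuation) matches the paper's, and the use of the effective viscous flux $F=(2\mu+\lambda)\operatorname{div}u-P$ together with the elliptic system $\Delta F=\operatorname{div}(\rho\dot u+\nabla d\cdot\Delta d)$, $\mu\Delta\operatorname{curl}u=\operatorname{curl}(\rho\dot u+\nabla d\cdot\Delta d)$ is indeed the paper's machinery for controlling $\nabla u$. But there is a genuine gap in your account of why the extra viscosity condition $2\mu>\lambda$ from \cite{LT23} is avoided. You attribute it to a pointwise comparison between $\lambda(\operatorname{div}u)^{2}$ and $P\operatorname{div}u$ that the Hoff--Helmholtz splitting circumvents, but that splitting is standard and was already available to \cite{LT23}; it does not, by itself, remove the constraint. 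The paper's stated reason for $2\mu>\lambda$ in \cite{LT23} is that it enters through a higher-integrability estimate for $u$ needed in the \emph{total-energy} formulation, and the decisive change here is to work with the \emph{temperature equation} directly (Lemma~\ref{lem3}), which yields $\|\nabla\theta\|_{L^2_tL^2_x}$ in exchange for $\|\sqrt{\rho}\dot u\|_{L^2_tL^2_x}$ rather than a higher $L^p$ bound on $u$. Your proposal never makes this switch and so does not actually explain how the condition is dropped.

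Two further mismatches with the paper's proof are worth noting. First, your $L^3$ bound on $\rho$ --- multiplying the continuity equation by $3\rho^{2}$ and estimating $\int\rho^{3}\operatorname{div}u\,dx$ via $F$ --- would leave a term $\int\rho^{3}F\,dx$ that does not close without an $L^\infty$ or high-integrability bound on $F$ in time. The paper instead multiplies the \emph{nonlocal} form $(-\Delta)^{-1}\operatorname{div}$ of the momentum equation by $\rho^{3}$ and uses the renormalized continuity identity $\partial_t\rho^3+\operatorname{div}(u\rho^3)+2\rho^3\operatorname{div}u=0$, so that the $(2\mu+\lambda)\operatorname{div}u\cdot\rho^{3}$ contribution exactly cancels and $(-\Delta)^{-1}\operatorname{div}(\rho u)$ is absorbed into the time derivative (Lemma~\ref{lem2}). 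Second, you propose additional higher-order estimates (testing the $t$-differentiated momentum and temperature equations against $\dot u$ and $\theta_t$ to bound $\|\sqrt\rho\dot u\|_{L^\infty_tL^2_x}$ and $\|\sqrt\rho\theta_t\|_{L^\infty_tL^2_x}$); the paper never needs these, since after establishing the uniform bound $\rho\le\tfrac32\bar\rho$ and the $L^4_tL^6_x$ bounds for $u$ and $\nabla d$ it closes via the Serrin-type blow-up criterion \eqref{up}. So while your sketch is not unreasonable, it omits the central insight (temperature vs.\ total energy) that the paper identifies as the reason $2\mu>\lambda$ can be removed, and as written your $L^3$-estimate step would not close.
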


\begin{remark}
It is not hard to check that the quantity $S_{0}$ in \eqref{sca1} is invariant under the scaling \eqref{sca}. Compared with \cite{LT23}, our result removes the additional viscosity constraint $2\mu>\lambda$ and also reveals that the scaling invariant quantity is not unique even under the same scaling transformation \eqref{sca}.
\end{remark}

\begin{remark}
The scaling-invariant quantity in \eqref{sca1} is rather complicated, and whether there exists a simpler scaling-invariant quantity remains an interesting question for further investigation.
\end{remark}

\begin{remark}
The nematic liquid crystal orientation field plays a significant role in the present analysis. Let $k>0$ and consider a general rescaling of space, time and unknowns of the form
\begin{align*}
\rho^{k}(x,t) \triangleq k^{a}\rho(k x,k^{\gamma} t),~
u^{k}(x,t) \triangleq k^{b}u(k x,k^{\gamma} t), ~
\theta^{k}(x,t) \triangleq k^{c}\theta(k x,k^{\gamma} t), ~
d^{k}(x,t) \triangleq k^{d}d(k x,k^{\gamma} t),
\end{align*}
where $a$, $b$, $c$, $d$, $\gamma\in\mathbb{R}$ are to be determined. A straightforward computation shows that the system \eqref{1} admits a unique nontrivial scaling-invariant transformation given by \eqref{sca}. In contrast, the non-isentropic Navier--Stokes equations \cite{L20,W25} admit a family of scaling-invariant transformations. Here, the presence of the orientation field rules out all other scaling structures.
\end{remark}

We mainly use a bootstrap argument and the blow-up criterion \eqref{up} to prove Theorem \ref{thm1}. It should be emphasized that the approaches in \cite{LZ21,LZ26,SZsub} do not apply in the present framework, since their smallness assumptions are imposed directly on the initial data and are not compatible with the scaling-invariant framework. In addition, the analysis in \cite{LT23} relies crucially on the extra viscosity condition $2\mu>\lambda$. This restriction is mainly used to derive a higher-integrability estimate for the velocity field, which is crucial in their argument. Hence, in order to derive the required \emph{a priori} estimates under the scaling-invariant assumption \eqref{sca1}, without any additional restriction on the viscosity coefficients, several new observations are needed.

First, the basic energy estimate does not provide the dissipation bound
\begin{align*}
\int_{0}^{T}\big(\|\nabla u\|_{L^{2}}^{2}
+\|\nabla^{2} d\|_{L^{2}}^{2}\big)\,dt,
\end{align*}
and we have to deal with $\|\rho\|_{L_t^\infty L_x^{3}}$ and
$\|\nabla \theta\|_{L_t^{2} L_x^{2}}$
(see Lemma \ref{lem1}).
Motivated by \cite{W25,L20}, we derive an $L_t^\infty L_x^{3}$ estimate for $\rho$ (see Lemma \ref{lem2}) and close the bound for $\|\rho\|_{L_t^\infty L_x^{3}}$ (see Corollary \ref{co3}).
In particular, we introduce the quantity
\begin{align*}
(\bar{\rho}^{2}+\bar{\rho})
\sup_{t\in[0,T]}\|\rho(t)\|_{L^{3}}
\int_{0}^{T}\|\nabla\theta\|_{L^{2}}^{2}\,dt,
\end{align*}
and assume that it is sufficiently small; see \eqref{p1}. The next step is to estimate $\|\nabla\theta\|_{L_t^{2} L_x^{2}}$.  Here, we work directly with the temperature equation, rather than the total energy equation used in \cite{LT23}. This strategy requires control of $\sqrt{\rho}\dot{u}$ in $L_t^{2} L_x^{2}$ (see Lemma \ref{lem3}).
By multiplying \eqref{1}$_2$ by $u_t$ and performing delicate energy estimates, we obtain the desired estimate for $\sqrt{\rho}\dot{u}$.
We emphasize that the coupling term $\nabla d\cdot \Delta d$ generates a trouble term, denoted by $K_{2}(t)$; see \eqref{xxxx0}. To handle the effects of the orientation field, we derive suitable estimates from \eqref{1}$_4$ (see Lemma \ref{lem5}). To close the bounds for
$\|\nabla u\|_{L_t^\infty L_x^{2}}$,
$\|\nabla^{2} d\|_{L_t^\infty L_x^{2}}$,
and $\|\nabla\theta\|_{L_t^{2} L_x^{2}}$,
we further introduce the quantity
\begin{align*}
\sup_{t\in[0,T]}\big(\|\nabla u(t)\|_{L^{2}}^{2}+(1+\bar{\rho}+\bar{\rho}^{2})\|\nabla^{2} d(t)\|_{L^{2}}^{2}\big)\int_{0}^{T}\big(\|\nabla u\|_{L^{2}}^{2}+\|\nabla^{2} d\|_{L^{2}}^{2}\big)\,dt,
\end{align*}
and assume that it is sufficiently small; see \eqref{p1}.
This allows us to close the estimates (see Corollary \ref{co4}).
Finally, we complete the bootstrap argument.
Combining the estimates in Lemmas \ref{lem1}--\ref{lem5}, we obtain a uniform upper bound for the density and the desired control of $S_t$, provided that the initial scaling-invariant quantity $N_{0}$ is sufficiently small (see Lemmas \ref{co1} and \ref{co2}).
In particular, in order to construct a complete scaling-invariant functional compatible with both the \emph{a priori} estimates and the bootstrap scheme, we multiply \eqref{c9} by $\big(1+\bar{\rho}+\tfrac{1}{\bar{\rho}}\big)$.

The rest of the paper is organized as follows.  Section \ref{sec2} is devoted to establishing \emph{a priori} estimates, which are necessary for the proof of Theorem \ref{thm1}. The proof of Theorem \ref{thm1} will be done in Section \ref{sec3}.

\section{\emph{A priori} estimates}\label{sec2}

In this section we establish the necessary \emph{a priori} estimates and collect several auxiliary lemmas needed to prove the global existence and uniqueness of strong solutions to \eqref{1}--\eqref{4}.
We begin with the local existence and uniqueness of strong solutions in \cite{FLN} and a Serrin-type blow-up criterion in \cite{SZ25}.

\begin{lemma}\label{local}
(Local well-posedness and blow-up criterion) Assume that the initial data
$(\rho_{0} \geq 0, u_{0}, \theta_{0} \geq 0, d_{0})$ satisfies \eqref{01} and \eqref{com} in Theorem \ref{thm1}. Then there exists a positive time $T_{0}>0$ and  problem \eqref{1}--\eqref{4} admits a unique strong solution $(  \rho, u, \theta, d)$ in  $\mathbb{R}^{3} \times\left(0, T_{0}\right)$. Furthermore, if $T_{\max}$  is the maximal time of existence for local strong solution, then either $T_{\max}=\infty$ or
	\begin{align}\label{up}
		\lim _{T \rightarrow T_{\max}}\left(\|\rho\|_{L^{\infty}\left(0, T ; L^{\infty}\right)}+\|u\|_{L^{s_{1}}\left(0, T ; L^{r_{1}}\right)}+\|\nabla d\|_{L^{s_{2}}\left(0, T ; L^{r_{2}}\right)}\right)=\infty,
	\end{align}
	with  $r_{i}$  and  $s_{i}$  satisfying
	\begin{align*}
		\frac{2}{s_{i}}+\frac{3}{r_{i}} \leq 1, \quad s_{i}>1, \quad 3<r_{i} \leq \infty, \quad i=1,2.
	\end{align*}
\end{lemma}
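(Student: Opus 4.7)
The statement combines a local well-posedness result adapted from \cite{FLN} and a Serrin-type blow-up criterion adapted from \cite{SZ25}, so my plan is to outline both halves. For the local part, I would first regularize the initial density by $\rho_{0}^{\delta}=\rho_{0}+\delta$ for small $\delta>0$ so as to remove the vacuum degeneracy in the momentum and temperature equations, while keeping $u_{0},\theta_{0},d_{0}$ unchanged (after a further mollification if needed). Then I would set up a linearized iteration: given $(v,\tilde{\theta},\tilde{d})$ in a suitable Banach space, I would first solve the transport equation to obtain $\rho$, then the linear parabolic systems for $u$, $\theta$, and $d$, treating the nonlinear terms $\rho v\cdot\nabla v$, $\mathcal{Q}(\nabla v)$, $|\Delta \tilde{d}+|\nabla \tilde{d}|^{2}\tilde{d}|^{2}$ and $\nabla \tilde{d}\cdot\Delta \tilde{d}$ as forcings; the unit-sphere constraint is preserved a posteriori by using $|d_{0}|=1$ together with the identity $\tfrac{1}{2}(\partial_{t}+u\cdot\nabla)|d|^{2}=d\cdot\Delta d+|\nabla d|^{2}|d|^{2}$, which forces $|d|\equiv 1$ whenever the original equation \eqref{1}$_{4}$ is satisfied.

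Uniform-in-$\delta$ estimates in the norms of the definition would be derived by standard energy methods; the compatibility condition \eqref{com} is exactly what is needed to bound $\sqrt{\rho_{0}^{\delta}}\,u_{t}(0)$ and $\sqrt{\rho_{0}^{\delta}}\,\theta_{t}(0)$ in $L^{2}$ via $g_{1},g_{2}$, a bound that then propagates in time. A contraction argument in a weaker norm (e.g.\ $L^{\infty}_{t}L^{2}_{x}$ for the differences) on a sufficiently short interval $[0,T_{0}]$, with $T_{0}$ independent of $\delta$, produces a fixed point; sending $\delta\to 0^{+}$ via Aubin--Lions compactness yields the desired local strong solution, and uniqueness follows from an $L^{2}$ energy estimate for the difference of two solutions that share the same initial data.

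For the blow-up criterion I would argue by contradiction: if $T_{\max}<\infty$ and the quantity in \eqref{up} remains bounded on $[0,T_{\max})$, then all norms appearing in the definition of a strong solution can be controlled in terms of it, so one can invoke the local existence result at time $T_{\max}-\varepsilon$ with a uniform lifespan and extend past $T_{\max}$, a contradiction. The key estimates are: (i) $\|\nabla u\|_{L^{\infty}_{t}L^{2}_{x}}+\|\sqrt{\rho}\,\dot{u}\|_{L^{2}_{t,x}}$ by testing \eqref{1}$_{2}$ against $u_{t}$ and absorbing the convective and director-forcing terms via the Serrin hypotheses on $u$ and $\nabla d$; (ii) $\|\nabla^{2}d\|_{L^{\infty}_{t}L^{2}_{x}}$ from \eqref{1}$_{4}$ together with the Serrin bound on $\nabla d$; (iii) $\|\sqrt{\rho}\,\theta_{t}\|_{L^{\infty}_{t}L^{2}_{x}}+\|\nabla\theta\|_{L^{2}_{t}H^{1}_{x}}$ from the temperature equation, with the quadratic sources dominated by (i)--(ii) together with the $L^{\infty}$ bound on $\rho$; and (iv) higher regularity of $\rho$ in $H^{1}\cap W^{1,q}$ via a logarithmic Gronwall argument on $\nabla\rho$.

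The main obstacle in step (iv) is closing the estimate for $\|\nabla\rho\|_{L^{q}}$, since the transport equation produces a factor $\|\nabla u\|_{L^{\infty}}$ which is not directly controlled by the Serrin quantity; the standard remedy is to split $\operatorname{div}u$ through the effective viscous flux $F=(2\mu+\lambda)\operatorname{div}u-P$ and combine elliptic regularity with a Beale--Kato--Majda-type inequality to absorb the logarithmic blow-up. A secondary difficulty is keeping the constraint $|d|=1$ intact throughout the iteration, which rules out elementary maximum-principle arguments and requires the algebraic identity $d\cdot(\Delta d+|\nabla d|^{2}d)=\tfrac{1}{2}\Delta|d|^{2}$, valid whenever $|d|=1$, to propagate the sphere constraint from the initial time.
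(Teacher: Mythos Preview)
The paper does not actually prove this lemma: it is stated without proof and attributed to the references \cite{FLN} (for local existence) and \cite{SZ25} (for the blow-up criterion). Your outline is a reasonable sketch of the standard machinery behind those cited results, but there is no ``paper's own proof'' to compare against here.
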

In what follows, let $(\rho,u,\theta,d)$ be a local strong solution to \eqref{1}--\eqref{4} in $\mathbb{R}^{3}\times(0,T]$ for some $T>0$.
For later use, we introduce
\begin{align*}
	S_{t} \triangleq \bigg(1+\bar{\rho}+\frac{1}{\bar{\rho}}\bigg)S_{t}^{\prime}S_{t}^{\prime\prime},
\end{align*}
where
\begin{gather*}
S_{t}^{\prime}\triangleq \|\rho(t)\|_{L^{3}}+( \bar{\rho}^{2}+\bar{\rho}) ( \| \sqrt{\rho}u(t)\|_{L^{2}}^{2}+\|\nabla d(t)\|_{L^{2}}^{2} ), \\[2mm]
S_{t}^{\prime\prime}\triangleq \|\nabla u(t)\|_{L^{2}}^{2}+(\bar{\rho}+1)\|\sqrt{\rho} \theta(t)\|_{L^{2}}^{2}+\| \nabla^{2} d(t)\|_{L^{2}}^{2}+\| \nabla d(t)\|_{L^{4}}^{4}.
\end{gather*}

The following bootstrap argument (see, e.g., \cite[Proposition 1.21]{T06}) plays a crucial role in the proof of Theorem \ref{thm1}.
\begin{proposition}\label{pro}
	Under the hypotheses of Theorem \ref{thm1}, let $\varepsilon_{0}>0$ be the small constant appearing in \eqref{sca1}.
	Then there exist positive constants $C_{1}$ and $\varepsilon_{2}$ such that:
	(i) $C_{1}$ and $\varepsilon_{2}$ depend only on the physical parameters, but are independent of the initial data, $\bar{\rho}$, and $T$;
	(ii) $\varepsilon_{2}$ is further determined by \eqref{smm1}, \eqref{sm1}, \eqref{sm2}, \eqref{sm4}, \eqref{sm5}, \eqref{sm6}, \eqref{sm11}, \eqref{sm7}, \eqref{sm9}, and \eqref{s4}.
Moreover, if for some $\varepsilon$ satisfying $C_{1}\varepsilon_{0}<\varepsilon \le \varepsilon_{2}$, one has, for all $t\in[0,T]$,
\begin{align}\label{p1}
\rho(t) \leq 2\bar{\rho},\quad \mathcal{B}(t)\leq 2\varepsilon,
\end{align}
then
\begin{align}\label{p2}
\rho(t) \leq \frac{3}{2}\bar{\rho},\quad \mathcal{B}(t)\leq \frac{3}{2}\varepsilon,
\end{align}
where
\begin{align*}
\mathcal{B}(t) &\triangleq S_{t} + (\bar{\rho}^{2}+\bar{\rho})
\int_{0}^{T} \|\nabla \theta\|_{L^{2}}^{2}\, dt
\sup_{t\in[0,T]} \|\rho(t)\|_{L^{3}}\\
&\quad+ \sup_{t\in[0,T]}\big[\|\nabla u(t)\|_{L^{2}}^{2}+\big(1+\bar{\rho}+\bar{\rho}^{2}\big)\|\nabla^{2} d(s) \|_{L^{2}}^{2} \big]  \int_{0}^{T}\big(\|\nabla u\|_{L^{2}}^{2} +\|\nabla^{2} d \|_{L^{2}}^{2}\big)\, d t.
\end{align*}
\end{proposition}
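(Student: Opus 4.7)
\medskip

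\noindent\textbf{Proof proposal.} The plan is a standard continuation/bootstrap scheme in which the hypothesis \eqref{p1} is plugged into the a priori estimates collected in Lemmas \ref{lem1}--\ref{lem5} (and their corollaries \ref{co3}, \ref{co4}, \ref{co1}, \ref{co2} referenced in the introduction), and the smallness of $\varepsilon$ is used to absorb all higher-order terms and recover the improved bound \eqref{p2}. Concretely, I would fix $T\in(0,T_{\max})$ on which \eqref{p1} holds, and derive each of the a priori estimates using only the upper bound $\rho\le 2\bar\rho$ and the bound $\mathcal{B}(t)\le 2\varepsilon$. Since every constant in those lemmas is designed to be independent of $\bar\rho$ and the initial data (this is exactly why the factor $(1+\bar\rho+\tfrac{1}{\bar\rho})$ is inserted in the definition of $S_t$), the resulting estimate for each piece of $\mathcal{B}(t)$ will take the schematic form
\begin{equation*}
\text{(piece of } \mathcal{B}(t)\text{)} \;\le\; C\,S_0 + C\,P(\varepsilon),
\end{equation*}
where $P(\varepsilon)$ is a polynomial in $\varepsilon$ with vanishing constant term, collected from the nonlinear couplings (the $\rho\theta$-pressure term, the Ericksen stress $\nabla d\cdot\Delta d$, $\mathcal{Q}(\nabla u)$, $|\Delta d+|\nabla d|^2 d|^2$, and the various cross terms controlled in Lemmas \ref{lem3}--\ref{lem5}).

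Next, I would add the individual estimates to produce the full functional $\mathcal{B}(t)$. By the hypothesis $S_0\le\varepsilon_0$ and $C_1\varepsilon_0<\varepsilon$, the first contribution obeys $CS_0\le C\varepsilon_0<\varepsilon$ provided $C_1$ is chosen $\ge C$. The nonlinear remainder $CP(\varepsilon)$ is then absorbed into $\tfrac12\varepsilon$ by shrinking $\varepsilon\le\varepsilon_2$ where $\varepsilon_2$ is the minimum of the finitely many smallness thresholds that arise (these are exactly \eqref{smm1}, \eqref{sm1}, \eqref{sm2}, \eqref{sm4}, \eqref{sm5}, \eqref{sm6}, \eqref{sm11}, \eqref{sm7}, \eqref{sm9}, \eqref{s4} as listed in the statement). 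Summing, I obtain
\begin{equation*}
\mathcal{B}(t)\;\le\;\tfrac12\varepsilon+\tfrac12\varepsilon\;=\;\varepsilon\;<\;\tfrac32\varepsilon,
\end{equation*}
which is the second half of \eqref{p2}. The auxiliary terms in $\mathcal{B}(t)$ that are not part of $S_t$ (the temperature dissipation term and the dissipation of $\nabla u$ and $\nabla^2 d$) are precisely the two extra quantities highlighted in the introduction, so the estimates in Lemma \ref{lem3} and Corollary \ref{co4} close them in a parallel way.

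For the pointwise density bound, I would invoke the standard Hoff-type argument: express $\ln\rho$ along particle trajectories in terms of the effective viscous flux
\begin{equation*}
F\;=\;(\lambda+2\mu)\divv u-P,
\end{equation*}
use the transport equation $\rho_t+u\cdot\nabla\rho+\rho\,\divv u=0$, and control $\|F\|_{L^\infty_t L^\infty_x}$ and $\|\divv u\|_{L^1_t L^\infty_x}$ by interpolation between the bounds already obtained for $\|\nabla u\|_{L^\infty_t L^2_x}$, $\|\sqrt\rho\dot u\|_{L^2_t L^2_x}$ and $\|\nabla d\|_{H^2}$; each such quantity is $O(\sqrt\varepsilon)$ under \eqref{p1}. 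The resulting exponential estimate gives $\rho(t)\le \bar\rho\exp(C\sqrt\varepsilon)\le \tfrac32\bar\rho$ once $\varepsilon\le\varepsilon_2$ is chosen small enough (this is Lemma \ref{co1} in the introduction's list).

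The main obstacle, as flagged by the authors, is the absence of the viscosity constraint $2\mu>\lambda$ that was exploited in \cite{LT23} to obtain higher integrability of $u$. In its place I would lean on the $L^\infty_t L^3_x$ bound for $\rho$ (Lemma \ref{lem2}) combined with the temperature-equation estimate in Lemma \ref{lem3} to handle the coupling term $\nabla d\cdot\Delta d$ and the cross term $K_2(t)$; the factor $(\bar\rho^2+\bar\rho)\sup_t\|\rho\|_{L^3}\int_0^T\|\nabla\theta\|_{L^2}^2\,dt$ inserted into $\mathcal{B}$ is precisely what permits this closure without the viscosity hypothesis. Once that delicate step is executed, the rest of the bootstrap is algebraic: choose $C_1$ so that $C_1\varepsilon_0$ beats the linear-in-$S_0$ contribution, and choose $\varepsilon_2$ to beat every nonlinear $P(\varepsilon)$ term that arises.
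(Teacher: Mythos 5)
Your overall bootstrap strategy is the same as the paper's: feed \eqref{p1} into the a priori estimates of Lemmas \ref{lem1}--\ref{lem5}, use the smallness of $\varepsilon$ to absorb the nonlinear couplings, close the estimate for $\mathcal B(t)$ by choosing $C_1$ and $\varepsilon_2$, and finish with a Hoff-type trajectory argument for the density. The treatment of $\mathcal{B}(t)$ is essentially correct (the paper in fact shows $\mathcal{B}(t)\le CS_0=C\varepsilon_0$ directly after the absorptions, rather than leaving a residual $P(\varepsilon)$, but your variant with the $\tfrac12\varepsilon+\tfrac12\varepsilon$ split is compatible). One bookkeeping slip: the density bound is Lemma \ref{co2}, not \ref{co1}.

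However, there is a genuine gap in your density argument. You propose to integrate $\tfrac{d}{ds}\ln\rho=-\tfrac{1}{2\mu+\lambda}(F+P)$ along trajectories and control the resulting $\int_0^T F\,d\tau$ by bounding $\|F\|_{L^\infty_t L^\infty_x}$ (or $\|\divv u\|_{L^1_t L^\infty_x}$) via interpolation from $\|\nabla u\|_{L^\infty_t L^2_x}$, $\|\sqrt\rho\dot u\|_{L^2_t L^2_x}$, and $\|\nabla d\|_{L^\infty_t H^2}$. This does not work: an $L^\infty_t L^\infty_x$ bound on $F$ gives only $\int_0^T\|F\|_{L^\infty}\,d\tau\le T\|F\|_{L^\infty_t L^\infty_x}$, which is not uniform in $T$, and the available $L^2_t$ dissipation bounds cannot be interpolated to an $L^1_t L^\infty_x$ bound uniformly in $T$ either. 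The paper's key move, which your proposal does not contain, is the structural identity \eqref{s2},
\begin{equation*}
F \;=\; -\frac{d}{d\tau}\bigl[(-\Delta)^{-1}\operatorname{div}(\rho u)\bigr]
\;+\;[u_i,R_{ij}](\rho u_j)\;-\;(-\Delta)^{-1}\operatorname{div}\operatorname{div}M(d),
\end{equation*}
which splits $\int_0^T F\,d\tau$ into a total-derivative boundary term $N_1$ controlled by $\|(-\Delta)^{-1}\operatorname{div}(\rho u)\|_{L^\infty}$, a commutator term $N_2$ whose $L^1_t L^\infty_x$ norm is estimated through commutator bounds on $[u_i,R_{ij}](\rho u_j)$, and an Ericksen-stress term $N_3$; each piece is then bounded by $\bigl[(1+\bar\rho+\tfrac1{\bar\rho})S_0'S_0''\bigr]^{1/2}=O(\varepsilon_0^{1/2})$ uniformly in $T$. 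Without this decomposition the trajectory integral cannot be controlled and the pointwise bound $\rho\le\tfrac32\bar\rho$ does not follow, so you need to incorporate this step explicitly.
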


Throughout the rest of the paper, $C$ denotes a generic positive constant independent of the initial data, $T$, $\varepsilon_{0}$, $\varepsilon$, and $\bar{\rho}$, whose value may vary from line to line.

As a first step, we derive a basic energy estimate.
\begin{lemma}\label{lem1}
Under the hypotheses of Proposition \ref{pro},  it holds that
\begin{align}\label{x0}
& \sup _{t \in[0, T]}\big(\|\sqrt{\rho} u(t)\|_{L^{2}}^{2}+\|\nabla d(t)\|_{L^{2}}^{2}\big) +\int_{0}^{T}\big(\|\nabla u\|_{L^{2}}^{2}+ \|d_{t} \|_{L^{2}}^{2}+ \|\nabla^{2} d \|_{L^{2}}^{2}\big)\, d t \notag\\
&\leq  C\big(\|\sqrt{\rho_{0}} u_{0} \|_{L^{2}}^{2}+ \|\nabla d_{0} \|_{L^{2}}^{2}\big)+C\sup _{t \in[0, T]}\|\rho(t)\|_{L^{3}} \int_{0}^{T}\|\nabla \theta\|_{L^{2}}^{2}\, d t\sup _{t \in[0, T]}\|\rho(t)\|_{L^{3}}.
\end{align}
\end{lemma}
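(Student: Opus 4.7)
The plan is to combine the natural energy identities for the momentum and director equations so that the cross term $\int (u\cdot\nabla d)\cdot\Delta d\,dx$ cancels. First I would multiply the momentum equation in \eqref{1} by $u$ and integrate by parts, using the continuity equation, to obtain
\[
\tfrac{1}{2}\tfrac{d}{dt}\|\sqrt{\rho}\,u\|_{L^{2}}^{2}+\mu\|\nabla u\|_{L^{2}}^{2}+(\lambda+\mu)\|\operatorname{div} u\|_{L^{2}}^{2}=\int P\operatorname{div} u\,dx-\int (u\cdot\nabla d)\cdot\Delta d\,dx.
\]
Next, I would multiply the director equation by $\Delta d+|\nabla d|^{2}d$ and use the pointwise identities $d\cdot d_{t}=0$, $d\cdot(u\cdot\nabla d)=0$ and $d\cdot\Delta d=-|\nabla d|^{2}$, all forced by $|d|=1$. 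These kill the contributions from the factor $|\nabla d|^{2}d$ on the left and give
\[
\tfrac{1}{2}\tfrac{d}{dt}\|\nabla d\|_{L^{2}}^{2}+\|\Delta d+|\nabla d|^{2}d\|_{L^{2}}^{2}=\int (u\cdot\nabla d)\cdot\Delta d\,dx.
\]
Adding the two identities the coupling term disappears.

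The only remaining forcing term is then controlled via $P=\rho\theta$, H\"older, the Sobolev embedding $\|\theta\|_{L^{6}}\leq C\|\nabla\theta\|_{L^{2}}$ (legal since $\theta$ vanishes at infinity), and Young's inequality:
\[
\int P\operatorname{div} u\,dx\leq \|\rho\|_{L^{3}}\|\theta\|_{L^{6}}\|\nabla u\|_{L^{2}}\leq C\|\rho\|_{L^{3}}\|\nabla\theta\|_{L^{2}}\|\nabla u\|_{L^{2}}\leq \tfrac{\mu}{2}\|\nabla u\|_{L^{2}}^{2}+C\|\rho\|_{L^{3}}^{2}\|\nabla\theta\|_{L^{2}}^{2}.
\]
Integrating in $t$ then produces the required bound on $\sup_{t}(\|\sqrt{\rho}u\|_{L^{2}}^{2}+\|\nabla d\|_{L^{2}}^{2})$, on $\int_{0}^{T}\|\nabla u\|_{L^{2}}^{2}\,dt$ and on $\int_{0}^{T}\|\Delta d+|\nabla d|^{2}d\|_{L^{2}}^{2}\,dt$ with the right-hand side advertised in \eqref{x0}.

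The main obstacle is to promote the weaker dissipation $\|\Delta d+|\nabla d|^{2}d\|_{L^{2}}^{2}$ to the target quantities $\|\nabla^{2}d\|_{L^{2}}^{2}$ and $\|d_{t}\|_{L^{2}}^{2}$. Expanding the square and using $d\cdot\Delta d=-|\nabla d|^{2}$ gives the algebraic identity $\|\Delta d+|\nabla d|^{2}d\|_{L^{2}}^{2}=\|\Delta d\|_{L^{2}}^{2}-\|\nabla d\|_{L^{4}}^{4}$, so that the gap is precisely $\int_{0}^{T}\|\nabla d\|_{L^{4}}^{4}\,dt$. I would absorb this via the Gagliardo-Nirenberg inequality $\|\nabla d\|_{L^{4}}^{4}\leq C\|\nabla d\|_{L^{2}}\|\nabla^{2}d\|_{L^{2}}^{3}$ combined with the bootstrap bound $\mathcal{B}(t)\leq 2\varepsilon$ from \eqref{p1}: the definitions of $S_{t}'$ and $S_{t}''$ imply $(\bar{\rho}^{2}+\bar{\rho})\|\nabla d\|_{L^{2}}^{2}\|\nabla^{2}d\|_{L^{2}}^{2}\leq S_{t}'S_{t}''\leq 2\varepsilon/(1+\bar{\rho}+1/\bar{\rho})$, so that $\sup_{t}\|\nabla d\|_{L^{2}}\|\nabla^{2}d\|_{L^{2}}\leq C\sqrt{\varepsilon}$ uniformly in $\bar{\rho}$. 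Hence $\int_{0}^{T}\|\nabla d\|_{L^{4}}^{4}\,dt\leq C\sqrt{\varepsilon}\int_{0}^{T}\|\nabla^{2}d\|_{L^{2}}^{2}\,dt$, absorbable for $\varepsilon$ small. Finally, writing $d_{t}=\Delta d+|\nabla d|^{2}d-u\cdot\nabla d$ from the director equation and estimating $\|u\cdot\nabla d\|_{L^{2}}^{2}\leq \|u\|_{L^{6}}^{2}\|\nabla d\|_{L^{3}}^{2}\leq C\|\nabla u\|_{L^{2}}^{2}\|\nabla d\|_{L^{2}}\|\nabla^{2}d\|_{L^{2}}$, the bound on $\int_{0}^{T}\|d_{t}\|_{L^{2}}^{2}\,dt$ follows by the same absorption argument, completing the proof of \eqref{x0}.
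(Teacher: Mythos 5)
Your argument is correct, but it is a genuinely different route from the paper's. The paper multiplies the momentum equation by $u$, keeps the coupling term $-\int \Delta d\cdot\nabla d\cdot u\,dx$ on the right and estimates it crudely by H\"older, $C\|\nabla u\|_{L^{2}}\|\nabla d\|_{L^{3}}\|\nabla^{2}d\|_{L^{2}}$; for the director equation it squares $d_t-\Delta d = -u\cdot\nabla d+|\nabla d|^{2}d$ directly, so that $\frac{d}{dt}\|\nabla d\|_{L^{2}}^{2}+\|d_t\|_{L^{2}}^{2}+\|\nabla^{2}d\|_{L^{2}}^{2}$ emerges immediately, and the full right-hand side $C\|\nabla d\|_{L^{3}}^{2}(\|\nabla u\|_{L^{2}}^{2}+\|\nabla^{2}d\|_{L^{2}}^{2})$ is absorbed using the bootstrap smallness of $\sup_t\|\nabla d\|_{L^{2}}\|\nabla^{2}d\|_{L^{2}}$ (this is exactly the role of \eqref{smm1}). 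You instead test the director equation with $\Delta d+|\nabla d|^{2}d$, which produces the exact cancellation of the coupling term $\int(u\cdot\nabla d)\cdot\Delta d\,dx$ upon summing with the momentum energy identity -- a cleaner structural observation, and it removes the need to estimate $\int\Delta d\cdot\nabla d\cdot u\,dx$ at all. The price is that your dissipation is the weaker $\|\Delta d+|\nabla d|^{2}d\|_{L^{2}}^{2}$, and you must then recover $\|\nabla^{2}d\|_{L^{2}}^{2}$ and $\|d_t\|_{L^{2}}^{2}$ via the pointwise identity $\|\Delta d+|\nabla d|^{2}d\|_{L^{2}}^{2}=\|\Delta d\|_{L^{2}}^{2}-\|\nabla d\|_{L^{4}}^{4}$, a Gagliardo--Nirenberg bound, and the same bootstrap smallness. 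Both proofs thus hinge on the uniform (in $\bar\rho$) smallness of $\sup_t\|\nabla d\|_{L^{2}}\|\nabla^{2}d\|_{L^{2}}$, which you verify carefully from $\mathcal{B}(t)\le 2\varepsilon$; the net amount of work is comparable, but your derivation makes the Lin--Liu cancellation structure explicit, which the paper's proof bypasses.
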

\begin{proof}
Multiplying \eqref{1}$_{2}$ by $u$ and integrating the resultant over  $\mathbb{R}^{3}$, we obtain that
\begin{align*}
&\frac{1}{2} \frac{d}{d t}\|\sqrt{\rho} u\|_{L^{2}}^{2}+\mu\|\nabla u\|_{L^{2}}^{2}+(\mu+\lambda)\|\operatorname{div} u\|_{L^{2}}^{2}  \\
  &=  \int  \rho \theta  \operatorname{div}u \,  d x-\int \Delta d \cdot \nabla d \cdot u \, d x \notag\\
	&\leq  C\|\rho\|_{L^{3}}\|\nabla \theta\|_{L^{2}}\|\operatorname{div} u\|_{L^{2}}+C\|\nabla u\|_{L^{2}}\|\nabla d\|_{L^{3}} \|\nabla^{2} d \|_{L^{2}} \notag\\[2mm]
	&\leq  (\mu+\lambda)\|\operatorname{div} u\|_{L^{2}}^{2}+C\|\rho\|_{L^{3}}^{2}\|\nabla \theta\|_{L^{2}}^{2}+\frac{\mu}{2}\|\nabla u\|_{L^{2}}^{2}+C\|\nabla d\|_{L^{3}}^{2} \|\nabla^{2} d \|_{L^{2}}^{2},
\end{align*}
which yields that
\begin{align}\label{x1}
\frac{d}{d t}\|\sqrt{\rho} u\|_{L^{2}}^{2}+ \|\nabla u\|_{L^{2}}^{2}\leq C\|\rho\|_{L^{3}}^{2}\|\nabla \theta\|_{L^{2}}^{2}+C\|\nabla d\|_{L^{3}}^{2} \|\nabla^{2} d \|_{L^{2}}^{2}.
\end{align}

Using \eqref{1}$_{4}$ and H\"{o}lder's inequality, it follows that
\begin{align}
	\frac{d}{d t}\|\nabla d\|_{2}^{2}+\|d_{t} \|_{L^{2}}^{2}+ \|\nabla^{2} d \|_{L^{2}}^{2}  & =\int |d_{t}-\Delta d |^{2} \, d x=\int |u \cdot \nabla d-|\nabla d|^{2} d |^{2} \, d x \notag\\
	& \leq C\big(\|u\|_{L^{6}}^{2}\|\nabla d\|_{L^{3}}^{2}+\|\nabla d\|_{L^{4}}^{4}\big) \notag\\[2mm]
	&\leq C\|\nabla d\|_{L^{3}}^{2}\big(\|\nabla u\|_{L^{2}}^{2}+ \|\nabla^{2} d \|_{L^{2}}^{2}\big),
\end{align}
which combined with \eqref{x1} and Sobolev's inequality implies that
\begin{align*}
 &\frac{d}{d t}\big(\|\sqrt{\rho} u\|_{L^{2}}^{2}+\|\nabla d\|_{L^{2}}^{2}\big)+ \|\nabla u\|_{L^{2}}^{2}+ \|d_{t} \|_{L^{2}}^{2}+ \|\nabla^{2} d \|_{L^{2}}^{2} \\
	&\leq   C\|\rho\|_{L^{3}}^{2}\|\nabla \theta\|_{L^{2}}^{2}+C\|\nabla d\|_{L^{3}}^{2}\big(\|\nabla u\|_{L^{2}}^{2}+ \|\nabla^{2} d \|_{L^{2}}^{2}\big) \\[2mm]
	&  \leq   C\|\rho\|_{L^{3}}^{2}\|\nabla \theta\|_{L^{2}}^{2}+C\|\nabla d\|_{L^{2}}\|\nabla^{2} d\|_{L^{2}} \big(\|\nabla u\|_{L^{2}}^{2}+ \|\nabla^{2} d \|_{L^{2}}^{2}\big).
\end{align*}
Integrating the above inequality over $(0,T)$, we have
\begin{align*}
&\sup _{t \in[0, T]}\big(\|\sqrt{\rho} u(t)\|_{L^{2}}^{2}+\|\nabla d(t)\|_{L^{2}}^{2}\big)+\int_{0}^{T}\big(\|\nabla u\|_{L^{2}}^{2}+ \|d_{t} \|_{L^{2}}^{2}+ \|\nabla^{2} d \|_{L^{2}}^{2}\big)\, d t\notag\\
&\leq  C\big( \|\sqrt{\rho_{0}} u_{0} \|_{L^{2}}^{2}+ \|\nabla d_{0} \|_{L^{2}}^{2}\big)+C\sup _{t \in[0, T]}(\|\nabla d(t)\|_{L^{2}}\|\nabla^{2} d(t)\|_{L^{2}}) \int_{0}^{T}\big(\|\nabla u\|_{L^{2}}^{2}+ \|\nabla^{2} d \|_{L^{2}}^{2}\big)\,  dt\notag\\
&\quad +C\sup _{t \in[0, T]}\|\rho(t)\|_{L^{3}} \int_{0}^{T}\|\nabla \theta\|_{L^{2}}^{2}\,  d t\sup _{t \in[0, T]}\|\rho(t)\|_{L^{3}},
\end{align*}
which leads to the desired \eqref{x0} provided that $\varepsilon>0$ in \eqref{p1} is sufficiently small so that
\begin{align}\label{smm1}
C\sup _{t \in[0, T]}\big(\|\nabla d(t)\|_{L^{2}}\|\nabla^{2} d(t)\|_{L^{2}}\big)\leq \frac{1}{2}.
\end{align}
This concludes the proof.
\end{proof}

To estimate $\|\rho\|_{L_{t}^{\infty}L_{x}^{3}}$, we prove the following lemma.
\begin{lemma}\label{lem2}
Under the hypotheses of Proposition \ref{pro},  it holds that
\begin{align}\label{xx1}
	 \sup _{t \in[0, T]} \|\rho(t)\|_{L^{3}} \leq C \|\rho_{0} \|_{L^{3}}+C(\bar{\rho}^{2}+\bar{\rho}) \int_{0}^{T}\big(\|\nabla u\|_{L^{2}}^{2}+\|\nabla^{2} d\|_{L^{2}}^{2}\big) \, d t.
\end{align}
\end{lemma}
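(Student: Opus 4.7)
The plan is to derive an evolution equation for $\|\rho(t)\|_{L^{3}}^{3}$ from mass conservation and then bound the resulting coupling term by quantities on the right-hand side of \eqref{xx1}. First, multiplying $\eqref{1}_{1}$ by $3\rho^{2}$ and integrating by parts over $\mathbb{R}^{3}$ gives
\[
\frac{d}{dt}\|\rho\|_{L^{3}}^{3}+2\int\rho^{3}\operatorname{div} u\,dx=0,
\]
so the whole task reduces to estimating $\int\rho^{3}\operatorname{div} u\,dx$.

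To exploit the structure of the system, I plan to use an effective viscous flux adapted to the coupling with the director field. Using the identity $\nabla d\cdot\Delta d=\operatorname{div}(\nabla d\otimes\nabla d)-\tfrac{1}{2}\nabla|\nabla d|^{2}$, the momentum equation $\eqref{1}_{2}$ takes the divergence form with modified flux $F\triangleq(2\mu+\lambda)\operatorname{div} u-P+\tfrac{1}{2}|\nabla d|^{2}$, which gives the pointwise relation $\operatorname{div} u=(F+P-\tfrac{1}{2}|\nabla d|^{2})/(2\mu+\lambda)$. Substituting into the evolution identity produces three contributions: a positive pressure dissipation $\int\rho^{4}\theta\,dx\ge 0$, which is moved to the left-hand side and discarded; a director term $\int\rho^{3}|\nabla d|^{2}\,dx$, controlled by the bootstrap bound $\rho\le 2\bar\rho$ (from \eqref{p1}) together with Gagliardo--Nirenberg; and a flux term $\int\rho^{3}F\,dx$, bounded via H\"older with $\|\rho\|_{L^{6}}^{3}\le\bar{\rho}^{3/2}\|\rho\|_{L^{3}}^{3/2}$ and the direct $L^{2}$ estimate $\|F\|_{L^{2}}\le C(\|\nabla u\|_{L^{2}}+\|\rho\|_{L^{3}}\|\nabla\theta\|_{L^{2}}+\|\nabla d\|_{L^{4}}^{2})$ that follows straight from the definition of $F$. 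Crucially, I avoid invoking any elliptic $L^{p}$-regularity on $F$, since this would bring in $\rho\dot u$, which is only estimated later (Lemma \ref{lem3}).

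After a carefully designed Young's inequality that separates powers of $\|\rho\|_{L^{3}}$ from the remaining norms, one divides through by $\|\rho\|_{L^{3}}^{2}$ to obtain a differential inequality for $\|\rho(t)\|_{L^{3}}$ whose right-hand side is of the target form $(\bar{\rho}^{2}+\bar{\rho})(\|\nabla u\|_{L^{2}}^{2}+\|\nabla^{2}d\|_{L^{2}}^{2})$ modulo an error involving $\|\nabla\theta\|_{L^{2}}^{2}$. This error is absorbed using the bootstrap smallness of $(\bar{\rho}^{2}+\bar{\rho})\sup_{t}\|\rho(t)\|_{L^{3}}\int_{0}^{T}\|\nabla\theta\|_{L^{2}}^{2}\,dt$ embedded in $\mathcal{B}(t)\le 2\varepsilon$; integrating in time then delivers \eqref{xx1}. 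The main obstacle will be tuning the interpolation and Young exponents so that exactly the coefficient $(\bar{\rho}^{2}+\bar{\rho})$ appears (rather than some higher power of $\bar\rho$), that no uncontrolled $\|\sqrt{\rho}\dot u\|_{L^{2}}$ or $\|\nabla^{2}u\|_{L^{2}}$ survives, and that the final bound is purely \emph{linear} in the time integral rather than of Gr\"onwall exponential type.
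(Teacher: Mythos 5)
Your opening identity $\frac{d}{dt}\|\rho\|_{L^3}^3+2\int\rho^3\operatorname{div}u\,dx=0$ is the same as the paper's \eqref{xx3}, but your plan for the term $\int\rho^{3}\operatorname{div}u\,dx$ has a genuine gap. Rewriting $\operatorname{div}u=(F+P-\tfrac12|\nabla d|^{2})/(2\mu+\lambda)$ and then bounding $\|F\|_{L^{2}}$ straight from its definition is circular: you explicitly avoid elliptic regularity, so the only available bound is $\|F\|_{L^{2}}\le C(\|\nabla u\|_{L^{2}}+\|\rho\|_{L^3}\|\nabla\theta\|_{L^2}+\|\nabla d\|_{L^4}^2)$, which simply reinstates the term $C\bar\rho^{3/2}\|\rho\|_{L^3}^{3/2}\|\nabla u\|_{L^2}$ that Hölder would have given you without ever mentioning $F$. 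That term is \emph{linear} in $\|\nabla u\|_{L^2}$, not quadratic. No Young inequality will balance $\|\rho\|_{L^3}^{3/2}\|\nabla u\|_{L^2}$ against $\|\rho\|_{L^3}^2\|\nabla u\|_{L^2}^2$ (write it as $\|\rho\|_{L^3}^{2}\cdot\|\rho\|_{L^3}^{-1/2}\|\nabla u\|_{L^2}$ and you see an uncontrolled negative power of $\|\rho\|_{L^3}$), and integrating it in time requires $\int_0^T\|\nabla u\|_{L^2}\,dt$, which cannot be controlled by $\int_0^T\|\nabla u\|_{L^2}^2\,dt$ without a factor $T^{1/2}$. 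Since the whole bootstrap requires $T$-independent constants, the argument as sketched does not close.

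The missing idea, which is the actual content of the paper's proof, is \emph{not} to treat $\int\rho^3\operatorname{div}u\,dx$ as an error at all. Instead, one applies $(-\Delta)^{-1}\operatorname{div}$ to the momentum equation and multiplies the resulting scalar identity \eqref{xx2} by $\rho^{3}$; the term $(2\mu+\lambda)\rho^{3}\operatorname{div}u$ is then traded against the renormalized mass equation $\partial_{t}\rho^{3}+\operatorname{div}(u\rho^{3})+2\operatorname{div}u\,\rho^{3}=0$ to become $-\tfrac{2\mu+\lambda}{2}(\partial_t\rho^3+\operatorname{div}(u\rho^3))$, an exact time derivative plus a total divergence. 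Combined with an integration by parts in time on $\rho^{3}(-\Delta)^{-1}\operatorname{div}(\rho u)_t$, this yields
$\frac{d}{dt}\int\big(\tfrac{2\mu+\lambda}{2}-(-\Delta)^{-1}\operatorname{div}(\rho u)\big)\rho^{3}\,dx+\int\rho^{3}P\,dx$ on the left-hand side; the $L^{\infty}$ smallness of $(-\Delta)^{-1}\operatorname{div}(\rho u)$ (controlled by $\bar\rho^{3/4}\|\sqrt\rho u\|_{L^2}^{1/2}\|\nabla u\|_{L^2}^{1/2}$ via the bootstrap) makes this functional comparable to $\|\rho\|_{L^3}^3$. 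The surviving error terms — coming from $\operatorname{div}\operatorname{div}(\rho u\otimes u)$, the commutator-type pieces, and $(-\Delta)^{-1}\operatorname{div}\operatorname{div}M(d)$ — are genuinely quadratic in $\nabla u$ and $\nabla^{2}d$, which is exactly what makes the time integral of $\|\nabla u\|_{L^2}^2+\|\nabla^2 d\|_{L^2}^2$ appear with the clean $(\bar\rho^2+\bar\rho)$ prefactor. Your proposal does not reproduce this mechanism, and it is not a matter of tuning exponents: without the exact-derivative absorption of the first-order $\rho^3\operatorname{div}u$ term, the estimate cannot be made $T$-uniform.
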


\begin{proof}
Applying the operator  $(-\Delta)^{-1} \operatorname{div}$  to  \eqref{1}$_{2}$ yields that
\begin{align}\label{xx2}
 (-\Delta)^{-1} \operatorname{div}(\rho u)_{t} +(-\Delta)^{-1} \operatorname{div} \operatorname{div}(\rho u \otimes u)+(2 \mu+\lambda) \operatorname{div} u-P= -(-\Delta)^{-1} \operatorname{div} \operatorname{div}M(d),
\end{align}
due to
\begin{align*}
 \Delta d \cdot \nabla d= \operatorname{div}\left(\nabla d \odot \nabla d-\frac{1}{2}|\nabla d|^{2} \mathbb{I}_{3}\right),
\end{align*}
where
\begin{align*}
M(d) \triangleq \nabla d \odot \nabla d-\frac{1}{2}|\nabla d|^{2} \mathbb{I}_{3}, \quad  \nabla d \odot \nabla d \triangleq \sum_{k=1}^{3} \partial_{i} d^{k} \partial_{j} d^{k}, \quad \mathbb{I}_{3} \triangleq 3 \times 3 \text { unit matrix. }
\end{align*}

It follows from \eqref{1}$_{1}$ that
\begin{align}\label{xx3}
\partial_{t} \rho^{3}+\operatorname{div} (u \rho^{3} )+2 \operatorname{div} u \rho^{3}=0.
\end{align}
Multiplying \eqref{xx2} by  $\rho^{3}$  and using \eqref{xx3}, we find that
\begin{align}\label{xx4}
	&\rho^{3}(-\Delta)^{-1} \operatorname{div}(\rho u)_{t}+\rho^{3} (-\Delta)^{-1} \operatorname{div} \operatorname{div}(\rho u \otimes u)-\frac{2 \mu+\lambda}{2}\big(\partial_{t} \rho^{3}+\operatorname{div} (u \rho^{3} )\big)-\rho^{3}  P  \notag\\
	&=-\rho^{3}(-\Delta)^{-1} \operatorname{div} \operatorname{div}M(d).
\end{align}
Integration by parts together with \eqref{xx3} yields that
\begin{align*}
	&\int \rho^{3}  (-\Delta)^{-1} \operatorname{div}(\rho u)_{t}\, d x\notag\\
	&=  \frac{d}{d t} \int \rho^{3}  (-\Delta)^{-1} \operatorname{div}(\rho u)\, d x+\int\big(\operatorname{div} (\rho^{3} u )+2 \operatorname{div} u \rho^{3}\big)  (-\Delta)^{-1} \operatorname{div}(\rho u) \, d x \notag\\
	&=   \frac{d}{d t} \int \rho^{3}  (-\Delta)^{-1} \operatorname{div}(\rho u) \, d x+\int\Big[2 \operatorname{div} u \rho^{3}  (-\Delta)^{-1} \operatorname{div}(\rho u)-\rho^{3} u \cdot \nabla (-\Delta)^{-1} \operatorname{div}(\rho u)\Big]\, d x,
\end{align*}
which combined with \eqref{xx4}, H\"{o}lder's inequality, and Sobolev's inequality indicates that
\begin{align*}
	&\frac{d}{d t} \int\Big(\frac{2 \mu+\lambda}{2}-(-\Delta)^{-1} \operatorname{div}(\rho u)\Big) \rho^{3}\, d x+\int \rho^{3}  P  \, d x\notag\\
	&=\int\Big[  \rho^{3} (-\Delta)^{-1} \operatorname{div} \operatorname{div}(\rho u \otimes u)-\rho^{3}u \cdot \nabla  (-\Delta)^{-1} \operatorname{div}(\rho u)+2 \operatorname{div} u \rho^{3}  (-\Delta)^{-1}\operatorname{div}(\rho u)\Big]  \, d x\notag\\
	&\quad +\int \rho^{3} (-\Delta)^{-1} \operatorname{div} \operatorname{div} M(d)  \, d x\notag\\
	&\leq C\|\rho\|_{L^{\infty}}^{2}\|\rho\|_{L^{3}}\|\rho u^{2}\|_{L^{\frac{3}{2}}}+C\|\rho\|_{L^{\infty}}^{2}\|\rho\|_{L^{3}}\|\nabla u\|_{L^{2}}\|\rho u\|_{L^{2}} + C\|\rho\|_{L^{\infty}}\|\rho\|_{L^{3}}^{2} \|  (-\Delta)^{-1}\operatorname{div} \operatorname{div}M(d) \|_{L^{3}} \notag\\[2mm]
	&\leq C(\bar{\rho}^{2}+\bar{\rho}) \|\rho\|_{L^{3}}^{2}\big(\|\nabla u\|_{L^{2}}^{2}+\|\nabla^{2} d\|_{L^{2}}^{2}\big).
\end{align*}
Integrating the above inequality  over $(  0, T )$, one can deduce that
\begin{align}\label{xx7}
	&\sup _{t \in[0, T]} \|\rho(t)\|_{L^{3}}^{3}+\int_{0}^{T}\int\rho^{3} P \,  dx \, dt\notag\\
	&\leq   C\|\rho_{0}\|_{L^{3}}^{3}+C \bar{\rho}^{\frac{3}{4}} \sup _{t \in[0, T]}\Big(\|\sqrt{\rho} u(t)\|_{L^{2}}^{\frac{1}{2}}\|\nabla u(t)\|_{L^{2}}^{\frac{1}{2}}\Big) \|\rho(t) \|_{L^{3}}^{3}\notag\\
	& \quad +C(\bar{\rho}^{2}+\bar{\rho}) \sup _{t \in[0, T]}\|\rho(t)\|_{L^{3}}^{2}\int_{0}^{T}\big(\|\nabla u\|_{L^{2}}^{2}+\|\nabla^{2} d\|_{L^{2}}^{2}\big)\, d t,
\end{align}
where we have used
\begin{align}\label{xu0}
	\|(-\Delta)^{-1} \operatorname{div}(\rho u)\|_{L^{\infty}} & \leq C\|(-\Delta)^{-1} \operatorname{div}(\rho u)\|_{L^{6}}^{\frac{1}{2}}\|\nabla(-\Delta)^{-1} \operatorname{div}(\rho u)\|_{L^{6}}^{\frac{1}{2}} \notag\\
	& \leq C\|\rho u\|_{L^{2}}^{\frac{1}{2}}\|\rho u\|_{L^{6}}^{\frac{1}{2}} \notag\\
	& \leq C \bar{\rho}^{\frac{3}{4}}\|\sqrt{\rho} u\|_{L^{2}}^{\frac{1}{2}}\|\nabla u\|_{L^{2}}^{\frac{1}{2}}.
\end{align}

We choose $\varepsilon>0$ in \eqref{p1} small enough so that
\begin{align}\label{sm1}
C \bar{\rho}^{\frac{3}{4}}\sup _{t \in[0, T]}\Big(\|\sqrt{\rho} u(t)\|_{L^{2}}^{\frac{1}{2}}\|\nabla u(t)\|_{L^{2}}^{\frac{1}{2}}\Big)\leq \frac{1}{2}.
\end{align}
Then  \eqref{xx7} implies that
\begin{align*}
\sup _{t \in[0, T]} \|\rho(t)\|_{L^{3}}^{3} \leq C \|\rho_{0}\|_{L^{3}}^{3}+C(\bar{\rho}^{2}+\bar{\rho}) \sup _{t \in[0, T]}\|\rho(t)\|_{L^{3}}^{2} \int_{0}^{T}\big(\|\nabla u\|_{L^{2}}^{2}+\|\nabla^{2} d\|_{L^{2}}^{2}\big)\, d t,
\end{align*}
and the desired \eqref{xx1} follows by the Young inequality.
\end{proof}

As a consequence of Lemmas \ref{lem1} and \ref{lem2}, we obtain the following corollary.
\begin{corollary}\label{co3}
Under the hypotheses of Proposition \ref{pro},  it holds that
\begin{align}\label{c3}
&\sup _{t \in[0, T]} \Big[\|\rho(t)\|_{L^{3}}+( \bar{\rho}^{2}+\bar{\rho})\big( \| \sqrt{\rho}u(t)\|_{L^{2}}^{2}+\| \nabla d(t)\|_{L^{2}}^{2}\big)\Big]\notag\\
&\quad +( \bar{\rho}^{2}+\bar{\rho})\int_{0}^{T}\big(  \| \nabla u\|_{L^{2}}^{2}+\| \nabla^{2} d\|_{L^{2}}^{2}+\| d_{t}\|_{L^{2}}^{2}\big)\, d t  \leq C S_{0}^{\prime}.
\end{align}
\end{corollary}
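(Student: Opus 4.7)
The plan is to combine Lemmas \ref{lem1} and \ref{lem2} and use the bootstrap assumption \eqref{p1} (the bound $\mathcal{B}(t)\le 2\varepsilon$) to close all three pieces of the left-hand side of \eqref{c3} in terms of $S_{0}^{\prime}$ alone. The key observation is that the ``bad'' temperature term on the right-hand side of \eqref{x0} is precisely the one controlled by the second summand in the definition of $\mathcal{B}(t)$.

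First I would multiply the estimate \eqref{x0} through by the factor $(\bar{\rho}^{2}+\bar{\rho})$ so that the two quantities $(\bar{\rho}^{2}+\bar{\rho})\sup_{t}(\|\sqrt{\rho}u\|_{L^{2}}^{2}+\|\nabla d\|_{L^{2}}^{2})$ and $(\bar{\rho}^{2}+\bar{\rho})\int_{0}^{T}(\|\nabla u\|_{L^{2}}^{2}+\|d_{t}\|_{L^{2}}^{2}+\|\nabla^{2}d\|_{L^{2}}^{2})\,dt$ appear on the left. The first term on the right then becomes $C(\bar{\rho}^{2}+\bar{\rho})(\|\sqrt{\rho_{0}}u_{0}\|_{L^{2}}^{2}+\|\nabla d_{0}\|_{L^{2}}^{2})$, which is already part of $S_{0}^{\prime}$. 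For the remaining term
\[
C(\bar{\rho}^{2}+\bar{\rho})\sup_{t\in[0,T]}\|\rho(t)\|_{L^{3}}\int_{0}^{T}\|\nabla\theta\|_{L^{2}}^{2}\,dt\,\sup_{t\in[0,T]}\|\rho(t)\|_{L^{3}},
\]
I would use the bootstrap hypothesis $\mathcal{B}(t)\le 2\varepsilon$ to dominate the factor $(\bar{\rho}^{2}+\bar{\rho})\sup_{t}\|\rho(t)\|_{L^{3}}\int_{0}^{T}\|\nabla\theta\|_{L^{2}}^{2}\,dt$ by $2\varepsilon$, leaving at most $C\varepsilon\sup_{t}\|\rho(t)\|_{L^{3}}$.

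Next I would plug in \eqref{xx1} from Lemma \ref{lem2} to estimate this leftover factor of $\sup_{t}\|\rho\|_{L^{3}}$:
\[
C\varepsilon\sup_{t\in[0,T]}\|\rho(t)\|_{L^{3}}\le C\varepsilon\|\rho_{0}\|_{L^{3}}+C\varepsilon(\bar{\rho}^{2}+\bar{\rho})\int_{0}^{T}\big(\|\nabla u\|_{L^{2}}^{2}+\|\nabla^{2}d\|_{L^{2}}^{2}\big)\,dt.
\]
The first piece is absorbed into $CS_{0}^{\prime}$, and for $\varepsilon$ small (I would record this smallness as another condition contributing to $\varepsilon_{2}$) the second piece is absorbed into the dissipation on the left. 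This yields
\[
(\bar{\rho}^{2}+\bar{\rho})\sup_{t}\big(\|\sqrt{\rho}u\|_{L^{2}}^{2}+\|\nabla d\|_{L^{2}}^{2}\big)+(\bar{\rho}^{2}+\bar{\rho})\int_{0}^{T}\big(\|\nabla u\|_{L^{2}}^{2}+\|d_{t}\|_{L^{2}}^{2}+\|\nabla^{2}d\|_{L^{2}}^{2}\big)\,dt\le CS_{0}^{\prime}.
\]

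Finally, to add the $\sup_{t}\|\rho(t)\|_{L^{3}}$ piece to the left-hand side of \eqref{c3}, I would apply \eqref{xx1} one more time: its right-hand side is now bounded by $C\|\rho_{0}\|_{L^{3}}+CS_{0}^{\prime}\le CS_{0}^{\prime}$ by what we just proved, which completes \eqref{c3}. The main technical point, and the only place where care is needed, is the absorption step: one must check that the smallness condition placed on $\varepsilon$ here (of the form $C\varepsilon\le 1/2$) is independent of $\bar{\rho}$ and $T$, so that it is compatible with the universal $\varepsilon_{2}$ described in Proposition \ref{pro}; no new difficulty arises because the factor $(\bar{\rho}^{2}+\bar{\rho})$ is already present on both sides before the absorption.
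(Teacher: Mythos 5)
Your proof is correct and takes essentially the same route as the paper: multiply \eqref{x0} by $(\bar\rho^{2}+\bar\rho)$, combine with \eqref{xx1}, and use the smallness of $(\bar\rho^{2}+\bar\rho)\sup_{t}\|\rho\|_{L^{3}}\int_{0}^{T}\|\nabla\theta\|_{L^{2}}^{2}\,dt$ to absorb the temperature term; the paper records this smallness as condition \eqref{sm6} rather than appealing to the bootstrap hypothesis directly, but that is the same requirement. The only cosmetic difference is the order of substitution (the paper adds the scaled \eqref{x0} into \eqref{xx1} before absorbing, while you absorb first and then invoke \eqref{xx1} twice), which does not change the argument.
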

\begin{proof}
	Multiplying \eqref{x0} by $2C( \bar{\rho}^{2}+\bar{\rho})$ and substituting the resultant into \eqref{xx1} implies that
	\begin{align*}
		& \sup _{t \in[0, T]} \Big[\|\rho(t)\|_{L^{3}}+C( \bar{\rho}^{2}+\bar{\rho}) \big( \| \sqrt{\rho}u(t)\|_{L^{2}}^{2}+\| \nabla d(t)\|_{L^{2}}^{2} \big)\Big]\notag\\
		&\quad   +C( \bar{\rho}^{2}+\bar{\rho})\int_{0}^{T} \big(  \| \nabla u\|_{L^{2}}^{2}+\| \nabla^{2} d\|_{L^{2}}^{2}+\| d_{t}\|_{L^{2}}^{2} \big) \, d t \notag\\
		&\leq C   \big(\|\rho_{0}\|_{L^{3}}+ ( \bar{\rho}^{2}+\bar{\rho})   \big(\| \sqrt{\rho_{0}}u_{0}\|_{L^{2}}^{2}+\| \nabla d_{0}\|_{L^{2}}^{2}\big) \big) \notag\\
		&\quad +C ( \bar{\rho}^{2}+\bar{\rho}) \sup _{ t \in[0, T]}\|\rho(t)\|_{L^{3}}\int_{0}^{T}\|\nabla \theta\|_{L^{2}}^{2} \, d t\sup _{ t \in[0, T]}\|\rho(t)\|_{L^{3}},
	\end{align*}
which yields \eqref{c3} provided $\varepsilon>0$ in \eqref{p1} is sufficiently small so that
	\begin{align}\label{sm6}
		C ( \bar{\rho}^{2}+\bar{\rho})\sup _{t \in[0, T]}\|\rho(t)\|_{L^{3}} \int_{0}^{T}\|\nabla \theta\|_{L^{2}}^{2} \, d t   \leq \frac{1}{2}.
	\end{align}
This concludes the proof.	
\end{proof}

Next, we estimate the term $\|\nabla \theta\|_{L_{t}^{2}L_{x}^{2}}$.
\begin{lemma}\label{lem3}
Under the hypotheses of Proposition \ref{pro},  it holds that
\begin{align}\label{xxx1}
	 &\sup _{t \in[0, T]}\|\sqrt\rho \theta(t)\|_{L^{2}}^{2}+\int_{0}^{T}\|\nabla \theta\|_{L^{2}}^{2} \, d t \notag\\
	 &\leq C \|\sqrt{\rho_{0}}\theta_{0}\|_{L^{2}}^{2}+C \sup _{t \in[0, T]}\big(\|\nabla u(t)\|_{L^{2}}^{2}\|\rho(t)\|_{L^{3}}\big) \int_{0}^{T}\|\sqrt{\rho} \dot{u}\|_{L^{2}}^{2}\, d t\notag\\
	&\quad  +C\int_{0}^{T}\|\nabla u\|_{L^{2}}^{2}\|\nabla^{2} d\|_{L^{2}}^{4}\, dt +C \int_{0}^{T}\|\nabla^{2} d \|_{L^{2}}^{3} \|\nabla^{3} d \|_{L^{2}}\, dt+C \int_{0}^{T}\|\nabla^{2} d \|_{L^{2}}^{7}\|\nabla d\|_{L^{2}}\, dt.
\end{align}
\end{lemma}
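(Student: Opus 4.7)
My approach is a standard $L^{2}$ energy estimate for the temperature. Multiplying \eqref{1}$_{3}$ by $\theta$, integrating over $\mathbb{R}^{3}$, and using the continuity equation \eqref{1}$_{1}$ together with the far-field condition \eqref{4} to handle the convective term yields
\begin{equation*}
\tfrac{1}{2}\tfrac{d}{dt}\|\sqrt{\rho}\,\theta\|_{L^{2}}^{2} + \|\nabla\theta\|_{L^{2}}^{2} = -\int\rho\theta^{2}\operatorname{div}u\,dx + \int\mathcal{Q}(\nabla u)\,\theta\,dx + \int\big|\Delta d+|\nabla d|^{2} d\big|^{2}\theta\,dx =: J_{1}+J_{2}+J_{3}.
\end{equation*}
After integrating in time over $(0,T)$, the target \eqref{xxx1} will follow once each $J_{i}$ is controlled by $\tfrac{1}{8}\|\nabla\theta\|_{L^{2}}^{2}$ (absorbed on the left) plus remainders of the advertised shape.

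For $J_{3}$ I use the constraint $|d|=1$ to get the pointwise bound $\big|\Delta d+|\nabla d|^{2}d\big|^{2}\leq C(|\Delta d|^{2}+|\nabla d|^{4})$. A H\"older split together with the Sobolev embedding $\|\theta\|_{L^{6}}\leq C\|\nabla\theta\|_{L^{2}}$ and the Gagliardo--Nirenberg inequality $\|\Delta d\|_{L^{3}}\leq C\|\nabla^{2}d\|_{L^{2}}^{1/2}\|\nabla^{3}d\|_{L^{2}}^{1/2}$ gives $\int|\Delta d|^{2}\theta\,dx\leq C\|\nabla^{2}d\|_{L^{2}}^{3/2}\|\nabla^{3}d\|_{L^{2}}^{1/2}\|\nabla\theta\|_{L^{2}}$. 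Likewise, the interpolation $\|\nabla d\|_{L^{24/5}}^{4}\leq C\|\nabla d\|_{L^{2}}^{1/2}\|\nabla^{2}d\|_{L^{2}}^{7/2}$ combined with $\|\theta\|_{L^{6}}\leq C\|\nabla\theta\|_{L^{2}}$ yields $\int|\nabla d|^{4}\theta\,dx\leq C\|\nabla d\|_{L^{2}}^{1/2}\|\nabla^{2}d\|_{L^{2}}^{7/2}\|\nabla\theta\|_{L^{2}}$. Young's inequality then produces exactly the last two summands on the right of \eqref{xxx1}.

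For $J_{1}$ and $J_{2}$ the heart of the argument is to trade $\nabla u$ against $\sqrt{\rho}\,\dot u$ via the Lam\'e-type elliptic estimate extracted from \eqref{1}$_{2}$, namely $\|\nabla^{2}u\|_{L^{2}}\leq C(\bar{\rho}^{1/2}\|\sqrt{\rho}\,\dot u\|_{L^{2}}+\|\nabla(\rho\theta)\|_{L^{2}}+\|\nabla d\cdot\Delta d\|_{L^{2}})$. Since $|\mathcal{Q}(\nabla u)|\leq C|\nabla u|^{2}$, one H\"older split plus Gagliardo--Nirenberg gives $J_{2}\leq C\|\nabla u\|_{L^{2}}^{3/2}\|\nabla^{2}u\|_{L^{2}}^{1/2}\|\nabla\theta\|_{L^{2}}$. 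For $J_{1}$ I decompose $\rho\theta^{2}=(\sqrt{\rho}\,\theta)(\sqrt{\rho}\,\theta)$ and use $\|\sqrt{\rho}\,\theta\|_{L^{3}}\leq\|\rho\|_{L^{3}}^{1/2}\|\theta\|_{L^{6}}\leq C\|\rho\|_{L^{3}}^{1/2}\|\nabla\theta\|_{L^{2}}$ to obtain $|J_{1}|\leq C\|\rho\|_{L^{3}}^{1/2}\|\sqrt{\rho}\,\theta\|_{L^{2}}\|\nabla\theta\|_{L^{2}}\|\nabla u\|_{L^{6}}$. Young absorbs one copy of $\|\nabla\theta\|_{L^{2}}^{2}$; substituting the elliptic bound and $\|\nabla d\cdot\Delta d\|_{L^{2}}\leq C\|\nabla^{2}d\|_{L^{2}}^{3/2}\|\nabla^{3}d\|_{L^{2}}^{1/2}$ then yields, after a further Young step, contributions of the form $\|\nabla u\|_{L^{2}}^{2}\|\rho\|_{L^{3}}\|\sqrt{\rho}\,\dot u\|_{L^{2}}^{2}$ and $\|\nabla u\|_{L^{2}}^{2}\|\nabla^{2}d\|_{L^{2}}^{4}$. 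Pulling $\sup_{[0,T]}(\|\nabla u\|_{L^{2}}^{2}\|\rho\|_{L^{3}})$ out of the time integral then produces the precise form of the second summand on the right of \eqref{xxx1}.

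The principal difficulty is the $\rho$-weight bookkeeping in $J_{1}$: the output must feature $\|\rho\|_{L^{3}}$ rather than $\bar{\rho}$ for the bound to be compatible with the scaling-invariant functional $S_{t}$ and the bootstrap hypothesis \eqref{p1}. Any unbalanced $\bar{\rho}$-power here would spoil the closure step exploited later in Corollary \ref{co4}, so the H\"older--Gagliardo--Nirenberg exponents in the treatment of $J_{1}$ must be tuned precisely to the shape advertised by \eqref{xxx1}.
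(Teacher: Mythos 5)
Your decomposition into $J_1,J_2,J_3$ matches the paper's, and your treatment of $J_3$ is essentially identical. The gap is in $J_1$ and $J_2$, and it is not merely cosmetic.

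The Lam\'e-type estimate you invoke, $\|\nabla^{2}u\|_{L^{2}}\lesssim\bar{\rho}^{1/2}\|\sqrt{\rho}\dot u\|_{L^{2}}+\|\nabla(\rho\theta)\|_{L^{2}}+\|\nabla d\cdot\Delta d\|_{L^{2}}$, is not available here: $\|\nabla(\rho\theta)\|_{L^{2}}$ contains $\|\theta\nabla\rho\|_{L^{2}}$, and no control on $\nabla\rho$ is part of the bootstrap hypothesis \eqref{p1} or of anything proved up to Lemma~\ref{lem3}. The paper circumvents this precisely by passing to the effective viscous flux $F=(2\mu+\lambda)\operatorname{div}u-P$ and the vorticity: the system \eqref{equ} for $(F,\operatorname{curl}u)$ has source $\rho\dot u+\nabla d\cdot\Delta d$ with no pressure gradient, whence $\|\nabla F\|_{L^{2}}+\|\nabla\operatorname{curl}u\|_{L^{2}}\lesssim\bar{\rho}^{1/2}\|\sqrt{\rho}\dot u\|_{L^{2}}+\||\nabla d||\nabla^{2}d|\|_{L^{2}}$ (the paper's \eqref{eq1}) and then $\|\nabla u\|_{L^{6}}\lesssim\|\nabla\operatorname{curl}u\|_{L^{2}}+\|\nabla F\|_{L^{2}}+\bar{\rho}\|\nabla\theta\|_{L^{2}}$ (the paper's \eqref{eq3}). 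Without this device your $J_1,J_2$ estimates cannot be closed.

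There is a second, quantitative problem: the $\|\rho\|_{L^{3}}$ weight in the second summand of \eqref{xxx1} is not optional — it is exactly what lets Corollary~\ref{co4} absorb $\int_0^T\|\sqrt{\rho}\dot u\|_{L^{2}}^{2}\,dt$ into the left side of \eqref{xxxx0} via the smallness of $\sup(\|\nabla u\|_{L^{2}}^{2}\|\rho\|_{L^{3}})$ — and your route does not produce it. Your $J_1$ split gives a $\|\sqrt{\rho}\theta\|_{L^{2}}^{2}$ prefactor, not $\|\nabla u\|_{L^{2}}^{2}$, so it cannot be the source of that term; and your $J_2$ path $\|\nabla u\|_{L^{12/5}}^{2}\lesssim\|\nabla u\|_{L^{2}}^{3/2}\|\nabla u\|_{L^{6}}^{1/2}$, after two Young steps, yields an \emph{unweighted} $\bar{\rho}\|\sqrt{\rho}\dot u\|_{L^{2}}^{2}$, which cannot be absorbed. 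The paper instead interpolates $L^{12/5}$ between $L^{2}$ and $L^{3}$, $\|\nabla u\|_{L^{12/5}}^{2}\le\|\nabla u\|_{L^{2}}\|\nabla u\|_{L^{3}}$, and then bounds $\|\nabla u\|_{L^{3}}$ by $\|F\|_{L^{3}}+\|\operatorname{curl}u\|_{L^{3}}+\|\rho\theta\|_{L^{3}}$ using \eqref{eq2}: it is the boundedness of $(-\Delta)^{-1}\operatorname{div}$ from $L^{3/2}$ to $L^{3}$ applied to $\rho\dot u$ that generates $\|\rho\|_{L^{3}}^{1/2}\|\sqrt{\rho}\dot u\|_{L^{2}}$, and hence, after Young, the $\|\rho\|_{L^{3}}\|\nabla u\|_{L^{2}}^{2}\|\sqrt{\rho}\dot u\|_{L^{2}}^{2}$ term. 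Finally, note that the paper's $I_1$ is handled much more simply than your $J_1$: H\"older gives $I_1\le C\|\rho\|_{L^{6}}\|\nabla u\|_{L^{2}}\|\theta\|_{L^{6}}^{2}\le C\bar{\rho}^{1/2}\|\rho\|_{L^{3}}^{1/2}\|\nabla u\|_{L^{2}}\|\nabla\theta\|_{L^{2}}^{2}$, which is absorbed outright by the smallness condition \eqref{sm2} — no elliptic estimate is needed there.
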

\begin{proof}
Multiplying \eqref{1}$_{3}$ by $\theta$ and integration by parts, one gets that
\begin{align}\label{xxx2}
\frac{1}{2} \frac{d}{d t} \|\sqrt\rho \theta\|_{L^{2}}^{2}+ \|\nabla \theta\|_{L^{2}}^{2}
	=-\int \rho \theta^{2} \operatorname{div} u \, dx+\int \mathcal{Q}(\nabla u) \theta \, dx+ \int  |\Delta d+|\nabla d|^{2} d |^{2} \theta \, dx \triangleq\sum_{i=1}^{3} I_{i}.
\end{align}
By  H\"{o}lder's inequality, Sobolev's inequality, and \eqref{p1}, we have
\begin{align}\label{xxx3}
I_{1} \leq C\|\rho\|_{L^{6}}\|\nabla u\|_{L^{2}}\|\theta^{2}\|_{L^{3}} \leq C\bar{\rho}^{\frac{1}{2}}\|\rho\|_{L^{3}}^{\frac{1}{2}}\|\nabla u\|_{L^{2}}\|\nabla \theta\|_{L^{2}}^{2}.
\end{align}
Applying $\operatorname{div}$ and $\operatorname{curl}$ to \eqref{1}$_{2}$, respectively, yields the elliptic system
\begin{align}\label{equ}
	\begin{cases}
	\Delta F=\operatorname{div}(\rho \dot{u}+\nabla d \cdot \Delta d),\\
	 \mu \Delta \operatorname{curl} u=\operatorname{curl} (\rho \dot{u}+\nabla d \cdot \Delta d),
		\end{cases}
\end{align}
where $	F \triangleq(2 \mu+\lambda) \operatorname{div} u-P$. Representing $F$ and $\operatorname{curl} u$ via elliptic operators and using the Sobolev embedding, we obtain that
	\begin{align} \label{eq2}
		\|F\|_{L^{3}}+\|\operatorname{curl} u\|_{L^{3}} \leq & C\| (-\Delta)^{-1} \operatorname{div}(\rho \dot{u}+\nabla d \cdot \Delta d)\|_{L^{3}}+C\| (-\Delta)^{-1} \operatorname{curl}  (\rho \dot{u}+\nabla d \cdot \Delta d)\|_{L^{3}} \notag\\
		\leq&  C\|\rho \dot{u}+\nabla d \cdot \Delta d\|_{L^{\frac{3}{2}}} \notag\\
		\leq &  C\| \rho \|_{L^{3}}^{\frac{1}{2}}\|\sqrt{\rho} \dot{u}\|_{L^{2}}+C\|\nabla^{2} d\|_{L^{2}}^{2}.
	\end{align}
By \eqref{eq2} and H\"older's  inequality, one has that
\begin{align}\label{xxx4}
	I_{2} \leq &  C\|\nabla \theta\|_{L^{2}}\|\nabla u\|_{L^{\frac{12}{5}}}^{2} \notag\\
	\leq &  C\|\nabla \theta\|_{L^{2}}\Big(\|\operatorname{div} u\|_{L^{\frac{12}{5}}}^{2}+ \|\operatorname{curl} u\|_{L^{\frac{12}{5}}}^{2}\Big)\notag\\
	\leq &  C\|\nabla \theta\|_{L^{2}}\big(\|\operatorname{div} u\|_{L^{2}}\|\operatorname{div} u\|_{L^{3}}+ \|\operatorname{curl} u\|_{L^{2}}\|\operatorname{curl} u\|_{L^{3}}\big)\notag\\
	\leq &  C\|\nabla \theta\|_{L^{2}}
	\|\nabla u\|_{L^{2}}\big( \|F\|_{L^{3}}+\|\rho \theta\|_{L^{3}} +\|\operatorname{curl} u\|_{L^{3}}\big) \notag\\	
	\leq & C\|\rho\|_{L^{3}}^{\frac{1}{2}}\|\nabla \theta\|_{L^{2}}\|\nabla u\|_{L^{2}}\|\sqrt{\rho} \dot{u}\|_{L^{2}}+C \bar{\rho}^{\frac{1}{2}}\|\rho\|_{L^{3}}^{\frac{1}{2}}\|\nabla u\|_{L^{2}}\|\nabla \theta\|_{L^{2}}^{2} + C\|\nabla \theta\|_{L^{2}}\|\nabla u\|_{L^{2}}\|\nabla^{2} d\|_{L^{2}}^{2}.
\end{align}
Noting that $|d|=1$ implies $|\Delta d+|\nabla d|^{2} d |^{2}=|\Delta d|^{2}-|\nabla d|^{4}$,  we estimate  $I_{3}$ as follows
\begin{align}\label{xxx5}
I_{3} &\leq   C\|\theta\|_{L^{6}}\|\Delta d\|_{L^{2}}\|\Delta d\|_{L^{3}} +C\|\nabla d\|_{L^{6}}\|\nabla d\|_{L^{6}}\|\nabla d\|_{L^{6}}\|\nabla d\|_{L^{3}}\|\theta\|_{L^{6}} \notag\\
	&\leq   C\|\nabla \theta\|_{L^{2}} \|\nabla^{2} d \|_{L^{2}}^{\frac{3}{2}} \|\nabla^{3} d \|_{L^{2}}^{\frac{1}{2}}+C \|\nabla^{2} d \|_{L^{2}}^{\frac{7}{2}}\|\nabla d\|_{L^{2}}^{\frac{1}{2}}\|\nabla \theta\|_{L^{2}}.
\end{align}

Substituting  \eqref{xxx3}, \eqref{xxx4}, and \eqref{xxx5} into \eqref{xxx2} and applying Young's inequality, we deduce that
\begin{align*}
	 &\frac{d}{d t} \|\sqrt\rho \theta\|_{L^{2}}^{2}+ \|\nabla \theta\|_{L^{2}}^{2} \\
	 &\leq C\|\rho\|_{L^{3}}\|\nabla u\|_{L^{2}}^{2}\|\sqrt{\rho} \dot{u}\|_{L^{2}}^{2}+C \bar{\rho}^{\frac{1}{2}}\|\rho\|_{L^{3}}^{\frac{1}{2}}\|\nabla u\|_{L^{2}}\|\nabla \theta\|_{L^{2}}^{2}+C \|\nabla u\|_{L^{2}}^{2}\|\nabla^{2} d\|_{L^{2}}^{4}\\[2mm]
	 &\quad +C \|\nabla^{2} d \|_{L^{2}}^{3} \|\nabla^{3} d \|_{L^{2}} +C \|\nabla^{2} d \|_{L^{2}}^{7}\|\nabla d\|_{L^{2}}.
\end{align*}
Integrating the above inequality over $(0,T)$ and choosing  $\varepsilon>0$ in \eqref{p1} sufficiently small so that
\begin{align}\label{sm2}
C \bar{\rho}^{\frac{1}{2}}\sup _{t \in[0, T]}\|\rho(t)\|_{L^{3}}^{\frac{1}{2}}\|\nabla u(t)\|_{L^{2}} \leq \frac{1}{2},
\end{align}
we obtain the desired \eqref{xxx1}.
\end{proof}

\begin{lemma}\label{lem4}
Under the hypotheses of Proposition \ref{pro}, it holds that
\begin{align}\label{xxxx0}
	 \sup _{t \in[0, T]}\|\nabla u(t)\|_{L^{2}}^{2}+\int_{0}^{T} \|\sqrt{\rho} \dot{u}\|_{L^{2}}^{2}\, d t &\leq C \big(\|\nabla u_{0}\|_{L^{2}}^{2}+\bar{\rho} \|\sqrt{\rho_{0}}\theta_{0}\|_{2}^{2}+\|\nabla d_{0}\|_{L^{4}}^{4}\big)+C \bar{\rho} \|\sqrt{\rho}\theta(t)\|_{2}^{2}+C\|\nabla d(t)\|_{L^{4}}^{4}  \notag\\
	&\quad +C \sup _{t \in[0, T]} K_{1}(t)  \int_{0}^{T}\|\nabla \theta\|_{L^{2}}^{2} \, d t+C\int_{0}^{T} K_{2}(t) \, d t,
\end{align}
where \begin{align*}
K_{1}(t)\triangleq\bar{\rho}+\bar{\rho}^{2}\|\rho(t)\|_{L^{3}}\|\nabla u(t)\|_{L^{2}}^{2},
\end{align*}
and
\begin{align*}
	K_{2}(t)&\triangleq \Big(\|\rho\|_{L^{3}}\|\nabla u\|_{L^{2}}^{2}\||\nabla d||\nabla^{2} d|\|_{L^{2}}^{2} + \bar{\rho}\|\nabla \theta\|_{L^{2}}\|\nabla d\|_{L^{2}}^{\frac{1}{2}} \|\nabla^{2} d \|_{L^{2}}^{\frac{1}{2}} \|\nabla d_{t} \|_{L^{2}} +\bar{\rho}\|\nabla  d\|_{L^{2}} \|\nabla^{2} d\|_{L^{2}}\|\nabla  d_{t}\|_{L^{2}}^{2} \\
	&\quad +\bar{\rho}\|\nabla u\|_{L^{2}}^{2}\|\nabla^{2} d\|_{L^{2}}^{4} +\bar{\rho} \|\nabla^{2} d\|_{L^{2}}^{3} \|\nabla^{3} d\|_{L^{2}} +\bar{\rho} \|\nabla^{2} d\|_{L^{2}}^{7} \|\nabla  d\|_{L^{2}}\\
	&\quad+\|\nabla d\|_{L^{2}}^{\frac{1}{2}} \|\nabla^{2} d \|_{L^{2}}^{\frac{1}{2}} \|\nabla d_{t} \|_{L^{2}}\||\nabla d||\nabla^{2} d|\|_{L^{2}}+ \bar{\rho}^{\frac{1}{2}}\| \rho\|_{L^{3}}^{\frac{1}{2}}\|\nabla \theta\|_{L^{2}}\|\nabla u\|_{L^{2}}\||\nabla d||\nabla^{2}d|\|_{L^{2}}   \\
	&\quad +\|\nabla u\|_{L^{2}}\|\nabla^{2}d\|_{L^{2}}^{2}   \||\nabla d||\nabla^{2}  d|\|_{L^{2}}+\|\nabla^{2} d\|_{L^{2}}^{\frac{3}{2}}\|\nabla^{3} d\|_{L^{2}}^{\frac{1}{2}}\||\nabla d||\nabla^{2} d|\|_{L^{2}}  \\
	&\quad +  \|\nabla^{2} d\|_{L^{2}}^{\frac{7}{2}}\|\nabla d\|_{L^{2}}^{\frac{1}{2}}\||\nabla d||\nabla^{2} d|\|_{L^{2}}+\|\nabla \theta\|_{L^{2}}\||\nabla d||\nabla^{2} d|\|_{L^{2}}\Big).
\end{align*}
\end{lemma}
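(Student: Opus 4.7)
The natural approach is the classical test against $u_t$. I would multiply $\eqref{1}_2$ by $u_t$ and integrate over $\mathbb{R}^3$. Using $\dot u = u_t + u\cdot\nabla u$, integration by parts in the viscous terms produces the desired $\frac{d}{dt}\bigl(\tfrac{\mu}{2}\|\nabla u\|_{L^2}^2 + \tfrac{\mu+\lambda}{2}\|\operatorname{div} u\|_{L^2}^2\bigr)$, while the $\rho u_t\cdot u_t$ term together with the $\rho u\cdot\nabla u\cdot u_t$ term eventually yield $\|\sqrt\rho\dot u\|_{L^2}^2$ on the left, after splitting $\sqrt\rho u_t = \sqrt\rho\dot u-\sqrt\rho u\cdot\nabla u$ and absorbing $\|\sqrt\rho u\cdot\nabla u\|_{L^2}^2$ into $K_1$ via Gagliardo--Nirenberg and \eqref{c3}. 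This produces the basic identity I want to close.

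Next, I would treat the pressure contribution $-\int P\,\operatorname{div} u_t\,dx$ by moving the time derivative outside: $-\frac{d}{dt}\int P\operatorname{div} u\,dx + \int P_t\operatorname{div} u\,dx$. The first term, after using $P=\rho\theta$ and Cauchy--Schwarz in the form $\int P\operatorname{div} u\,dx \le \tfrac{\mu+\lambda}{4}\|\operatorname{div} u\|_{L^2}^2 + C\bar\rho\|\sqrt\rho\theta\|_{L^2}^2$, accounts for the boundary contribution $\bar\rho\|\sqrt\rho\theta(t)\|_{L^2}^2$ and its initial counterpart. For $P_t$ I would combine $\eqref{1}_1$ and $\eqref{1}_3$ to write $P_t = -\operatorname{div}(Pu)-P\operatorname{div} u + \Delta\theta+\mathcal Q(\nabla u) + |\Delta d+|\nabla d|^2 d|^2$; the resulting integrals are estimated with H\"older, Sobolev, the elliptic identities \eqref{equ}--\eqref{eq2} for $F$ and $\operatorname{curl} u$, and Young's inequality, yielding precisely the $K_1(t)\|\nabla\theta\|_{L^2}^2$-type terms and the genuinely director-driven contributions that land inside $K_2(t)$ (for instance the terms involving $\bar\rho^{1/2}\|\rho\|_{L^3}^{1/2}\|\nabla\theta\|_{L^2}\|\nabla u\|_{L^2}\||\nabla d||\nabla^2 d|\|_{L^2}$).

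The main obstacle is the orientation coupling $-\int (\nabla d\cdot\Delta d)\cdot u_t\,dx$. To extract a time derivative I would use the standard identity $\nabla d\cdot\Delta d = \operatorname{div}\bigl(\nabla d\odot\nabla d-\tfrac{1}{2}|\nabla d|^2 \mathbb I_3\bigr)$, integrate by parts to transfer a derivative onto $u_t$, and then transfer the $t$--derivative back onto $d$ to obtain a total time derivative of an expression homogeneous in $\nabla d$ and $\nabla u$ (whose instantaneous values give the $\|\nabla d(t)\|_{L^4}^4$ boundary term after an application of Young's inequality $|\nabla d\odot\nabla d||\nabla u|\lesssim \eta|\nabla u|^2+\eta^{-1}|\nabla d|^4$) plus remainder terms involving $\nabla d_t$. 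Each remainder is then controlled by inserting $\eqref{1}_4$, namely $d_t = \Delta d - u\cdot\nabla d + |\nabla d|^2 d$, and using Gagliardo--Nirenberg interpolations such as $\|\nabla d\|_{L^\infty}\lesssim\|\nabla^2 d\|_{L^2}^{1/2}\|\nabla^3 d\|_{L^2}^{1/2}$ and $\|\nabla d\|_{L^6}\lesssim\|\nabla^2 d\|_{L^2}$. This is the delicate bookkeeping step: the seven or eight terms composing $K_2(t)$ each correspond to one of these remainder contributions, and the factor $\bar\rho$ appearing in several of them tracks the density weights coming from bounds like $\|\rho u\|_{L^2}\le\bar\rho^{1/2}\|\sqrt\rho u\|_{L^2}$.

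Finally, I would integrate in time over $(0,T)$, absorb any $\tfrac{d}{dt}$ boundary terms using the compatibility condition \eqref{com} (which controls $\|\sqrt\rho\dot u(0)\|_{L^2}$ via $g_1$ and thereby the initial integrated quantities) and Young's inequality to absorb $\eta\|\sqrt\rho\dot u\|_{L^2}^2$ from the cubic convection term into the left-hand side. Together with the smallness provided by \eqref{p1}, which allows small factors like $\bar\rho^{1/2}\|\rho\|_{L^3}^{1/2}\|\nabla u\|_{L^2}$ to be absorbed, this yields exactly \eqref{xxxx0}.
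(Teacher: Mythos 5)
Your outline gets the two total time derivatives right (the pressure coupling and the director coupling via $M(d)$), the convection absorption into $\tfrac{1}{2}\|\sqrt\rho\dot u\|_{L^2}^2$, and the pressure evolution identity. But it misses the one step the paper cannot do without: substituting $\operatorname{div} u = (F+P)/(2\mu+\lambda)$ into $\int P_t\operatorname{div} u\,dx$ \emph{before} estimating, so that $\int P_t P\,dx = \tfrac12\tfrac{d}{dt}\|P\|_{L^2}^2$ becomes another total time derivative and the remaining pressure contribution is $\int P_t F\,dx$, not $\int P_t\operatorname{div} u\,dx$. Together with $\tfrac12\tfrac{d}{dt}(\mu\|\nabla u\|_{L^2}^2+(\mu+\lambda)\|\operatorname{div} u\|_{L^2}^2)$ this repackages the left-hand side as $\tfrac12\tfrac{d}{dt}\bigl(\mu\|\operatorname{curl} u\|_{L^2}^2+\tfrac{1}{2\mu+\lambda}\|F\|_{L^2}^2\bigr)$, which is the functional the paper actually propagates. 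This is not cosmetic. After inserting the pressure equation, the $\Delta\theta$ term in $P_t$ gives $\int \Delta\theta\,F\,dx=-\int\nabla\theta\cdot\nabla F\,dx$, and $\nabla F$ is controllable through the elliptic estimate $\|\nabla F\|_{L^2}\le C\bigl(\bar\rho^{1/2}\|\sqrt\rho\dot u\|_{L^2}+\||\nabla d||\nabla^2 d|\|_{L^2}\bigr)$, because $\Delta F=\operatorname{div}(\rho\dot u+\nabla d\cdot\Delta d)$. Similarly $\int Pu\cdot\nabla F\,dx$ is fine.

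In your version you keep $\int P_t\operatorname{div} u\,dx$ and bound the boundary term $\int P\operatorname{div} u\,dx$ by Young. Then the $\Delta\theta$ contribution produces $\int\Delta\theta\operatorname{div} u\,dx=-\int\nabla\theta\cdot\nabla\operatorname{div} u\,dx$, and $\nabla\operatorname{div} u = \frac{1}{2\mu+\lambda}(\nabla F+\nabla P)$ is \emph{not} controllable: $\nabla P = \theta\nabla\rho+\rho\nabla\theta$ contains $\nabla\rho$, for which there is no a priori bound available at this stage (nor is $\|\Delta\theta\|_{L^2}$ or $\|\rho\dot\theta\|_{L^2}$ available if you integrate by parts the other way). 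Mentioning the elliptic identities \eqref{equ}--\eqref{eq2} is not enough here, since those give $\|F\|_{L^3}$ and $\|\operatorname{curl} u\|_{L^3}$; the step you actually need is the $\nabla F$ bound \eqref{eq1}, and the \emph{only} way the computation delivers an integral with $\nabla F$ rather than $\nabla\operatorname{div} u$ is by performing the effective-viscous-flux substitution at the level of the energy identity, which your argument does not do.

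A smaller point: the initial data does not need the compatibility condition \eqref{com} at this step. You integrate a differential inequality for a functional equivalent to $\|\nabla u\|_{L^2}^2$ (plus lower-order pieces), whose value at $t=0$ is controlled by $\|\nabla u_0\|_{L^2}^2+\bar\rho\|\sqrt{\rho_0}\theta_0\|_{L^2}^2+\|\nabla d_0\|_{L^4}^4$; no control of $\|\sqrt\rho\dot u(0)\|_{L^2}$ is invoked in the paper's proof of this lemma.
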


\begin{proof}
Multiplying \eqref{1}$_{2}$ by $u_{t}$, integrating the resultant over $\mathbb{R}^3$, and using the definition of $F$, we derive
\begin{align}\label{xxxx1}
	&\frac{1}{2} \frac{d}{d t} \big(\mu\|\nabla u\|_{L^{2}}^{2}+(\mu+\lambda)\|\operatorname{div} u\|_{L^{2}}^{2}\big)+\|\sqrt{\rho} \dot{u}\|_{L^{2}}^{2} \notag\\
	&= \frac{d}{dt} \int  P \operatorname{div} u \, dx -\int P_{t}  \operatorname{div} u \, dx+\int \rho u \cdot \nabla  u \cdot \dot{u}\, dx+\int M(d)\cdot \nabla u_{t}\, d x\notag\\
	&=\frac{d}{d t} \int P \operatorname{div} u \, d x-\frac{1}{2(2 \mu+\lambda)} \frac{d}{d t}\|P\|_{L^{2}}^{2} -\frac{1}{2 \mu+\lambda} \int P_{t} F \, d x+\int \rho u \cdot \nabla  u \cdot \dot{u}\, dx+\int M(d)\cdot \nabla u_{t}\, d x\notag\\
	&=\frac{1}{2(2 \mu+\lambda)} \frac{d}{d t}\|P\|_{L^{2}}^{2}+\frac{1}{2 \mu+\lambda} \frac{d}{d t} \int P F \,  d x  -\frac{1}{2 \mu+\lambda} \int P_{t} F \, d x+\int \rho u \cdot \nabla  u \cdot \dot{u}\, dx+\int M(d)\cdot \nabla u_{t}\, d x\notag\\
	&\leq \frac{1}{2(2 \mu+\lambda)} \frac{d}{d t}\|P\|_{L^{2}}^{2}+\frac{1}{2 \mu+\lambda} \frac{d}{d t} \int P F \,  d x  -\frac{1}{2 \mu+\lambda} \int P_{t} F \, d x+\frac{1}{2}\|\sqrt{\rho} \dot{u}\|_{L^{2}}^{2}\notag\\
	&\quad +C\int  \rho|u|^{2}|\nabla u|^{2}\,  dx+\int M(d)\cdot \nabla u_{t}\, d x.
\end{align}
From  \eqref{1}$_{2}$, \eqref{1}$_{3}$, and \eqref{z1}, the pressure $P$ satisfies
\begin{align*}
	P_{t}=-\operatorname{div}(Pu)- P \operatorname{div} u+\Delta \theta+\mathcal{Q}(\nabla u)+|\Delta d+|\nabla d|^{2} d |^{2},
\end{align*}
which leads to
\begin{align}\label{xxxx2}
	\int P_{t} F  \, d x
		&=\int\big[\big(\mathcal{Q}(\nabla u)-P \operatorname{div} u+|\Delta d+|\nabla d|^{2} d |^{2}\big) F+( P u- \nabla \theta) \cdot \nabla F\big] \, d x.
\end{align}	
It follows from H\"{o}lder's inequality, \eqref{xxxx1}, and \eqref{xxxx2} that
\begin{align}\label{xxxx3}
	& \frac{1}{2}\frac{d}{d t}\bigg(\mu\|\operatorname{curl} u\|_{L^{2}}^{2}+\frac{\|F\|_{L^{2}}^{2}}{2 \mu+\lambda} \bigg)+ \frac{1}{2}\|\sqrt{\rho} \dot{u}\|_{L^{2}}^{2} \notag\\
	&\leq C \int  \rho|u|^{2}|\nabla u|^{2}\,  dx+  \int M(d): \nabla u_{t}\,  d x-\frac{1}{2 \mu+\lambda} \int ( P u- \nabla \theta) \cdot \nabla F \, d x \notag\\
	&\quad-\frac{1}{2 \mu+\lambda}\int \Big(\mathcal{Q}(\nabla u)-P \operatorname{div} u+|\Delta d+|\nabla d|^{2} d |^{2}\Big) F \, d x\triangleq \sum_{i=1}^{4} M_{i}.
\end{align}

The standard $L^{2}$-theory for the elliptic system \eqref{equ} directly gives that
\begin{align}\label{eq1}
	\|\nabla F\|_{L^{2}}+\|\nabla \operatorname{curl} u\|_{L^{2}} \leq C\Big(\bar{\rho}^{\frac{1}{2}}\|\sqrt{\rho} \dot{u}\|_{L^{2}}+\||\nabla d||\nabla^{2} d|\|_{L^{2}}\Big).
\end{align}
Moreover, one has that
\begin{align}\label{eq3}
	\|\nabla u\|_{L^{6}} & \leq C\big(\|\operatorname{curl} u\|_{L^{6}}+\|\operatorname{div} u\|_{L^{6}}\big) \notag\\
	& \leq C\big(\|\operatorname{curl} u\|_{L^{6}}+\|F\|_{L^{6}}+\|\rho \theta\|_{L^{6}} \big) \notag\\
	& \leq C\big(\|\nabla \operatorname{curl} u\|_{L^{2}}+\|\nabla F\|_{L^{2}}+\bar{\rho}\|\nabla \theta\|_{L^{2}} \big)\notag\\
	&\leq C\Big(\bar{\rho}^{\frac{1}{2}}\|\sqrt{\rho} \dot{u}\|_{L^{2}}+\bar{\rho}\|\nabla \theta\|_{L^{2}}+\||\nabla d||\nabla^{2} d|\|_{L^{2}}\Big).
\end{align}
By H\"{o}lder's  inequality,  Sobolev's inequality, \eqref{eq3}, and \eqref{eq1}, we estimate
\begin{align*}
	M_{1}  &\leq  C\|\rho\|_{L^{3}}\|u\|_{L^{6}}^{2}\|\nabla u\|_{L^{6}}^{2} \notag\\[2mm]
     &\leq  C \bar{\rho}\|\rho\|_{L^{3}}\|\nabla u\|_{L^{2}}^{2}\|\sqrt{\rho} \dot{u}\|_{L^{2}}^{2}+C \bar{\rho}^{2}\|\rho\|_{L^{3}}\|\nabla u\|_{L^{2}}^{2} \|\nabla \theta\|_{L^{2}}^{2}+C\|\rho\|_{L^{3}}\|\nabla u\|_{L^{2}}^{2}\||\nabla d||\nabla^{2} d|\|_{L^{2}}^{2},\\[2mm]
     M_{2} &= \frac{d}{d t} \int  M(d): \nabla u \, d x-\int  \mathcal{M}(d)_{t}: \nabla u \, d x \notag\\
     	& \leq \frac{d}{d t} \int  M(d): \nabla u \, d x+C\|\nabla d\|_{L^{3}} \|\nabla d_{t} \|_{L^{2}}\|\nabla u\|_{L^{6}} \notag\\
     	&  \leq \frac{d}{d t} \int  M(d): \nabla u \, d x+C\|\nabla d\|_{L^{2}}^{\frac{1}{2}} \|\nabla^{2} d \|_{L^{2}}^{\frac{1}{2}} \|\nabla d_{t} \|_{L^{2}}\Big(\bar{\rho}^{\frac{1}{2}}\|\sqrt{\rho} \dot{u}\|_{L^{2}}+\bar{\rho}\|\nabla \theta\|_{L^{2}}+\||\nabla d||\nabla^{2} d|\|_{L^{2}}\Big)\notag \\
     	&  \leq \frac{d}{d t} \int  M(d): \nabla u \, d x+C\bar{\rho}^{\frac{1}{2}}\|\sqrt{\rho} \dot{u}\|_{L^{2}}\|\nabla d\|_{L^{2}}^{\frac{1}{2}} \|\nabla^{2} d \|_{L^{2}}^{\frac{1}{2}} \|\nabla d_{t} \|_{L^{2}}+C\bar{\rho}\|\nabla \theta\|_{L^{2}}\|\nabla d\|_{L^{2}}^{\frac{1}{2}} \|\nabla^{2} d \|_{L^{2}}^{\frac{1}{2}} \|\nabla d_{t} \|_{L^{2}}\notag\\
     	&\quad +C\|\nabla d\|_{L^{2}}^{\frac{1}{2}} \|\nabla^{2} d \|_{L^{2}}^{\frac{1}{2}} \|\nabla d_{t} \|_{L^{2}}\||\nabla d||\nabla^{2} d|\|_{L^{2}},\\[2mm]
    M_{3} &\leq C\big( \|\rho\|_{L^{6}}\| u\|_{L^{6}}\|\theta\|_{L^{6}}\|\nabla F\|_{L^{2}}+  \|\nabla \theta\|_{L^{2}}\|\nabla F\|_{L^{2}}\big)\notag\\[2mm]
    & \leq C \bar{\rho}\| \rho\|_{L^{3}}^{\frac{1}{2}}\|\nabla \theta\|_{L^{2}}\|\nabla u\|_{L^{2}}\|\sqrt{\rho} \dot{u}\|_{L^{2}}+C\bar{\rho}^{\frac{1}{2}}\| \rho\|_{L^{3}}^{\frac{1}{2}}\|\nabla \theta\|_{L^{2}}\|\nabla u\|_{L^{2}}\||\nabla d||\nabla^{2}d|\|_{L^{2}}  \notag\\[2mm]	
    &\quad +C \bar{\rho}^{\frac{1}{2}}\|\sqrt{\rho} \dot{u}\|_{L^{2}}\|\nabla \theta\|_{L^{2}} +C\|\nabla \theta\|_{L^{2}}\||\nabla d||\nabla^{2} d|\|_{L^{2}}, \\[2mm]   	
	M_{4} &\leq C\big(\|\nabla F\|_{L^{2}}\|\nabla u\|_{L^{\frac{12}{5}}}^{2}+ \|\nabla u\|_{L^{2}}\|\rho \theta\|_{L^{3}}\|\nabla F\|_{L^{2}}+ \| \Delta d\|_{L^{3}}\| \Delta d\|_{L^{2}}\|\nabla F\|_{L^{2}}+ \| \nabla d\|_{L^{6}}^{3}\| \nabla d\|_{L^{3}}\|\nabla F\|_{L^{2}}\big)\notag\\
	&\leq C\|\nabla F\|_{L^{2}}\|\nabla u\|_{L^{2}}\Big( \| \rho \|_{L^{3}}^{\frac{1}{2}}\|\sqrt{\rho} \dot{u}\|_{L^{2}}+ \|\nabla^{2} d\|_{L^{2}}^{2}+ \bar{\rho}^{\frac{1}{2}} \| \rho \|_{L^{3}}^{\frac{1}{2}}\|\nabla\theta\|_{L^{2}}  \Big) \notag\\	
	&\quad+C \bar{\rho}^{\frac{1}{2}}\|\rho\|_{L^{3}}^{\frac{1}{2}}\|\nabla u\|_{L^{2}}\|\nabla F\|_{L^{2}}\|\nabla \theta\|_{L^{2}} +C\| \nabla^{2} d\|_{L^{2}}^{\frac{3}{2}}\| \nabla^{3} d\|_{L^{2}}^{\frac{1}{2}}\|\nabla F\|_{L^{2}}+ C\| \nabla^{2} d\|_{L^{2}}^{\frac{7}{2}}\| \nabla d\|_{L^{2}}^{\frac{1}{2}}\|\nabla F\|_{L^{2}}\notag\\
	& \leq C  \bar{\rho}^{\frac{1}{2}}\| \rho\|_{L^{3}}^{\frac{1}{2}}\| \nabla u\|_{L^{2}}\|\sqrt{\rho} \dot{u}\|_{L^{2}}^{2}+C \| \rho\|_{L^{3}}^{\frac{1}{2}}\| \nabla u\|_{L^{2}}\|\sqrt{\rho} \dot{u}\|_{L^{2}}\||\nabla d||\nabla^{2} d|\|_{L^{2}}\notag\\	
	&\quad +C \bar{\rho}\| \rho\|_{L^{3}}^{\frac{1}{2}}\|\nabla \theta\|_{L^{2}}\|\nabla u\|_{L^{2}}\|\sqrt{\rho} \dot{u}\|_{L^{2}}+C\bar{\rho}^{\frac{1}{2}}\| \rho\|_{L^{3}}^{\frac{1}{2}}\|\nabla \theta\|_{L^{2}}\|\nabla u\|_{L^{2}}\||\nabla d||\nabla^{2}d|\|_{L^{2}}  \notag\\	
	&\quad +C\bar{\rho}^{\frac{1}{2}}\|\nabla u\|_{L^{2}}\|\nabla^{2} d\|_{L^{2}}^{2}\|\sqrt{\rho} \dot{u}\|_{L^{2}}+C\|\nabla u\|_{L^{2}}\|\nabla^{2}d\|_{L^{2}}^{2}   \||\nabla d||\nabla^{2} d|\|_{L^{2}}   \notag\\	
	&\quad +C\bar{\rho}^{\frac{1}{2}}\|\sqrt{\rho} \dot{u}\|_{L^{2}}\|\nabla^{2} d\|_{L^{2}}^{\frac{3}{2}}\|\nabla^{3} d\|_{L^{2}}^{\frac{1}{2}}+C\|\nabla^{2} d\|_{L^{2}}^{\frac{3}{2}}\|\nabla^{3} d\|_{L^{2}}^{\frac{1}{2}}\||\nabla d||\nabla^{2} d|\|_{L^{2}}  \notag\\	
	&\quad +C \bar{\rho}^{\frac{1}{2}}\|\sqrt{\rho} \dot{u}\|_{L^{2}}\|\nabla^{2} d\|_{L^{2}}^{\frac{7}{2}}\|\nabla d\|_{L^{2}}^{\frac{1}{2}}+C\|\nabla^{2} d\|_{L^{2}}^{\frac{7}{2}}\|\nabla d\|_{L^{2}}^{\frac{1}{2}}\||\nabla d||\nabla^{2} d|\|_{L^{2}}.
\end{align*}
Substituting the above estimates on $M_i\ (i=1,2,3,4)$ into \eqref{xxxx3}  and applying Young's inequality, we deduce that
\begin{align}\label{xxxx9}
&\frac{1}{2} \frac{d}{d t}\bigg(\mu\|\operatorname{curl} u\|_{L^{2}}^{2}+\frac{\|F\|_{L^{2}}^{2}}{2 \mu+\lambda}-2\int  M(d): \nabla u \, d x \bigg)+\frac{1}{2}\|\sqrt{\rho} \dot{u}\|_{L^{2}}^{2}\notag\\
&\leq C  \bar{\rho}\|\rho\|_{L^{3}}\|\nabla u\|_{L^{2}}^{2}\|\sqrt{\rho} \dot{u}\|_{L^{2}}^{2}+C \bar{\rho}^{2}\|\rho\|_{L^{3}}\|\nabla u\|_{L^{2}}^{2} \|\nabla \theta\|_{L^{2}}^{2}+C\|\rho\|_{L^{3}}\|\nabla u\|_{L^{2}}^{2}\||\nabla d||\nabla^{2} d|\|_{L^{2}}^{2} \notag\\
&\quad +C \bar{\rho}^{\frac{1}{2}}\|\sqrt{\rho} \dot{u}\|_{L^{2}}\|\nabla d\|_{L^{2}}^{\frac{1}{2}} \|\nabla^{2} d \|_{L^{2}}^{\frac{1}{2}} \|\nabla d_{t} \|_{L^{2}}+C\bar{\rho}\|\nabla \theta\|_{L^{2}}\|\nabla d\|_{L^{2}}^{\frac{1}{2}} \|\nabla^{2} d \|_{L^{2}}^{\frac{1}{2}} \|\nabla d_{t} \|_{L^{2}} \notag\\
 &\quad +C\|\nabla d\|_{L^{2}}^{\frac{1}{2}} \|\nabla^{2} d \|_{L^{2}}^{\frac{1}{2}} \|\nabla d_{t} \|_{L^{2}}\||\nabla d||\nabla^{2} d|\|_{L^{2}}\notag\\
&\quad +C \bar{\rho}^{\frac{1}{2}}\| \rho\|_{L^{3}}^{\frac{1}{2}}\| \nabla u\|_{L^{2}}\|\sqrt{\rho} \dot{u}\|_{L^{2}}^{2}+ C\| \rho\|_{L^{3}}^{\frac{1}{2}}\| \nabla u\|_{L^{2}}\|\sqrt{\rho} \dot{u}\|_{L^{2}}\||\nabla d||\nabla^{2} d|\|_{L^{2}}\notag\\	
&\quad +C \bar{\rho}\| \rho\|_{L^{3}}^{\frac{1}{2}}\|\nabla \theta\|_{L^{2}}\|\nabla u\|_{L^{2}}\|\sqrt{\rho} \dot{u}\|_{L^{2}}+C\bar{\rho}^{\frac{1}{2}}\| \rho\|_{L^{3}}^{\frac{1}{2}}\|\nabla \theta\|_{L^{2}}\|\nabla u\|_{L^{2}}\||\nabla d||\nabla^{2}d|\|_{L^{2}}  \notag\\	
&\quad +C\bar{\rho}^{\frac{1}{2}}\|\nabla u\|_{L^{2}}\|\nabla^{2} d\|_{L^{2}}^{2}\|\sqrt{\rho} \dot{u}\|_{L^{2}}+C\|\nabla u\|_{L^{2}}\|\nabla^{2}d\|_{L^{2}}^{2}   \||\nabla d||\nabla^{2} d|\|_{L^{2}}   \notag\\	
&\quad +C\bar{\rho}^{\frac{1}{2}}\|\sqrt{\rho} \dot{u}\|_{L^{2}}\|\nabla^{2} d\|_{L^{2}}^{\frac{3}{2}}\|\nabla^{3} d\|_{L^{2}}^{\frac{1}{2}}+C\|\nabla^{2} d\|_{L^{2}}^{\frac{3}{2}}\|\nabla^{3} d\|_{L^{2}}^{\frac{1}{2}}\||\nabla d||\nabla^{2} d|\|_{L^{2}}  \notag\\	
&\quad +C \bar{\rho}^{\frac{1}{2}}\|\sqrt{\rho} \dot{u}\|_{L^{2}}\|\nabla^{2} d\|_{L^{2}}^{\frac{7}{2}}\|\nabla d\|_{L^{2}}^{\frac{1}{2}}+C\|\nabla^{2} d\|_{L^{2}}^{\frac{7}{2}}\|\nabla d\|_{L^{2}}^{\frac{1}{2}}\||\nabla d||\nabla^{2} d|\|_{L^{2}}  \notag\\	
&\quad +C \bar{\rho}^{\frac{1}{2}}\|\sqrt{\rho} \dot{u}\|_{L^{2}}\|\nabla \theta\|_{L^{2}} +C\|\nabla \theta\|_{L^{2}}\||\nabla d||\nabla^{2} d|\|_{L^{2}}\notag\\
&\leq \Big( C \bar{\rho}\|\rho\|_{L^{3}}\|\nabla u\|_{L^{2}}^{2}+ C\bar{\rho}^{\frac{1}{2}}\|\rho\|_{L^{3}}^{\frac{1}{2}} \|\nabla u\|_{L^{2}}+\frac{1}{8}\Big)\|\sqrt{\rho} \dot{u}\|_{L^{2}}^{2}+CK_{1}(t)\|\nabla \theta\|_{L^{2}}^{2}+CK_{2}(t).
\end{align}
For any $\delta>0$, Young's inequality implies that
\begin{align*}
	\left|\int M(d): \nabla u \, d x\right|
	\leq \delta \|\nabla u\|_{L^{2}}^{2} +C\|\nabla d\|_{L^{4}}^{4},
\end{align*}
and
\begin{align*}
	\|\nabla u\|_{L^{2}}^{2}
	\leq C \big(\|\operatorname{curl} u\|_{L^{2}}^{2}
	+\|F\|_{L^{2}}^{2}
	+\bar{\rho} \|\sqrt{\rho}\theta\|_{L^{2}}^{2}\big).
\end{align*}
We choose $\varepsilon>0$ in \eqref{p1} sufficiently small so that
\begin{align}\label{sm4}
	C\max\Big\{ \bar{\rho}\sup _{t \in[0, T]}(\|\rho(t)\|_{L^{3}}\|\nabla u(t)\|_{L^{2}}^{2}),~~
	\bar{\rho}^{\frac{1}{2}}\sup _{t \in[0, T]}\big(\|\rho(t)\|_{L^{3}}^{\frac{1}{2}} \|\nabla u(t)\|_{L^{2}}\big)\Big\} \leq \frac{1}{16}.
\end{align}
Then, integrating \eqref{xxxx9} over $(0,T)$ and choosing $\delta$ sufficiently small, we obtain \eqref{xxxx0}.
\end{proof}

To deal with the difficulties arising from the orientation field, we need the following lemma.
\begin{lemma}\label{lem5}
Under the hypotheses of Proposition \ref{pro},  it holds that
\begin{align}\label{xxxxx0}
		&\sup _{t \in[0, T]}\big( \|\nabla^{2} d(t) \|_{L^{2}}^{2}+\|\nabla d(t)\|_{L^{4}}^{4}\big) +\int_{0}^{T}\big( \|\nabla d_{t} \|_{L^{2}}^{2}+ \|\nabla^{3} d \|_{L^{2}}^{2}+\||\nabla d|| \nabla^{2} d| \|_{L^{2}}^{2}\big)\, d t \notag\\
		&\leq C\big( \|\nabla^{2} d_{0} \|_{L^{2}}^{2}+\|\nabla d_{0}\|_{L^{4}}^{4}\big)+ C \sup _{t \in[0, T]}(\|\nabla d(t)\|_{L^{2}}^{2}\|\nabla u(t)\|_{L^{2}}^{2}) \int_{0}^{T}\|\nabla u\|_{L^{2}}^{6}\, d t+C\int_{0}^{T} \|\nabla^{2} d \|_{L^{2}}^{6}\, d t.
\end{align}
\end{lemma}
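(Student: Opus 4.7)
The strategy is to derive two coupled energy identities from the director equation $(1)_4$ and combine them so that the dissipative quantities $\|\nabla^3 d\|_{L^2}^2$ and $\||\nabla d||\nabla^2 d|\|_{L^2}^2$ both appear with positive coefficients on the left-hand side. The first identity comes from taking the $L^2$ inner product of $(1)_4$ with $\Delta^2 d$ and integrating by parts:
\[
\tfrac{1}{2}\tfrac{d}{dt}\|\Delta d\|_{L^2}^2 + \|\nabla\Delta d\|_{L^2}^2 = \int \nabla(u\cdot\nabla d):\nabla\Delta d\,dx + \int |\nabla d|^2 d\cdot\Delta^2 d\,dx.
\]
The last integral is expanded by repeated integration by parts together with the pointwise identities $|d|=1$ and $d\cdot\Delta d=-|\nabla d|^2$, producing $\int|\nabla d|^2|\Delta d|^2$, a term $\int|\nabla|\nabla d|^2|^2$, and a cross term of the form $\int(\nabla|\nabla d|^2\cdot\nabla d)\Delta d$.

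The second identity is obtained by differentiating $\|\nabla d\|_{L^4}^4$ in $t$, substituting $\nabla d_t$ from $(1)_4$, and integrating by parts the $\nabla\Delta d$ contribution; this produces $-4\int|\nabla d|^2|\Delta d|^2$ together with an analogous cross term, while the $|\nabla d|^2 d$ contribution collapses (via $\partial_i|d|^2=0$) to $4\|\nabla d\|_{L^6}^6 \leq C\|\nabla^2 d\|_{L^2}^6$, which furnishes the $\int_0^T\|\nabla^2 d\|_{L^2}^6\,dt$ term of the right-hand side. Combining the two identities with appropriate positive weights flips the sign of $\int|\nabla d|^2|\Delta d|^2$ so that it becomes dissipation on the left; the cross terms, estimated via $|\nabla|\nabla d|^2|\leq 2|\nabla d||\nabla^2 d|$, are absorbed by Young's inequality into $\||\nabla d||\nabla^2 d|\|_{L^2}^2$.

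For the fluid coupling $\int\nabla(u\cdot\nabla d):\nabla\Delta d$ (and its counterpart arising in the $L^4$ identity), split $\nabla(u\cdot\nabla d) = \nabla u\cdot\nabla d + u\cdot\nabla^2 d$. Both pieces are bounded using $\|u\|_{L^6}\leq C\|\nabla u\|_{L^2}$ and the Gagliardo--Nirenberg interpolations
\[
\|\nabla d\|_{L^\infty}\leq C\|\nabla d\|_{L^2}^{1/4}\|\nabla^3 d\|_{L^2}^{3/4},\qquad \|\nabla^2 d\|_{L^3}\leq C\|\nabla^2 d\|_{L^2}^{1/2}\|\nabla^3 d\|_{L^2}^{1/2},\qquad \|\nabla^2 d\|_{L^2}^{1/2}\leq C\|\nabla d\|_{L^2}^{1/4}\|\nabla^3 d\|_{L^2}^{1/4}.
\]
Each piece is dominated by $C\|\nabla u\|_{L^2}\|\nabla d\|_{L^2}^{1/4}\|\nabla^3 d\|_{L^2}^{7/4}$, and Young's inequality with conjugate exponents $(8,8/7)$ yields $\epsilon\|\nabla^3 d\|_{L^2}^2 + C\|\nabla u\|_{L^2}^8\|\nabla d\|_{L^2}^2$. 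Integration over $(0,T)$ together with
\[
\int_0^T \|\nabla u\|_{L^2}^8\|\nabla d\|_{L^2}^2\,dt\leq \sup_{t\in[0,T]}\bigl(\|\nabla d(t)\|_{L^2}^2\|\nabla u(t)\|_{L^2}^2\bigr)\int_0^T\|\nabla u\|_{L^2}^6\,dt
\]
produces the stated right-hand side. Finally, $\int_0^T\|\nabla d_t\|_{L^2}^2\,dt$ is obtained directly from $\nabla d_t = -\nabla(u\cdot\nabla d) + \nabla\Delta d + \nabla(|\nabla d|^2 d)$, bounded by $\int_0^T\|\nabla^3 d\|_{L^2}^2\,dt$ plus the coupling terms already controlled.

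\textbf{Main obstacle.} The central technical difficulty is selecting Gagliardo--Nirenberg exponents sharply enough so that the fluid coupling reduces to exactly $\|\nabla u\|_{L^2}^8\|\nabla d\|_{L^2}^2$ after Young's inequality. A coarser interpolation such as $\|\nabla d\|_{L^\infty}\leq C\|\nabla^2 d\|_{L^2}^{1/2}\|\nabla^3 d\|_{L^2}^{1/2}$ would instead produce intermediate mixed terms of the form $\|\nabla u\|_{L^2}^4\|\nabla^2 d\|_{L^2}^2$, which cannot be directly represented within the prescribed right-hand side structure of the lemma.
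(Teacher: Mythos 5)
Your proposal takes a genuinely different route from the paper's. The paper applies $\nabla$ to $(1)_4$ and uses the identity $\int|\nabla d_t-\nabla\Delta d|^2\,dx=\int|\nabla(u\cdot\nabla d)-\nabla(|\nabla d|^2 d)|^2\,dx$, which simultaneously puts $\|\nabla d_t\|_{L^2}^2$ and $\|\nabla^3 d\|_{L^2}^2$ on the left while keeping the right-hand side cross-term free; moreover, in the $L^4$ estimate the paper integrates by parts the term $4\int|\nabla d|^2\nabla d:\nabla\Delta d$ by pulling off the \emph{outer} derivative of $\nabla\Delta d=\partial_j(\partial_j\nabla d)$, which yields the dissipation $4\int|\nabla d|^2|\nabla^2 d|^2+2\int|\nabla|\nabla d|^2|^2$ directly. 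You instead test with $\Delta^2 d$ and expand $\int|\nabla d|^2 d\cdot\Delta^2 d$ via $d\cdot\Delta d=-|\nabla d|^2$, and in the $L^4$ identity you pull off the derivative $\partial_i$ of $\partial_i\Delta d$ to produce $-4\int|\nabla d|^2|\Delta d|^2$.

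There is a genuine gap in the second half of your argument. After combining your two identities (say with weight $\alpha$ on the $L^4$ identity), the dissipative weighted term you obtain is $(4\alpha-1)\int|\nabla d|^2|\Delta d|^2$, not $\int|\nabla d|^2|\nabla^2 d|^2=\||\nabla d||\nabla^2 d|\|_{L^2}^2$. Since $|\Delta d|^2$ is the square of the trace of the Hessian, $|\Delta d|^2$ does not control $|\nabla^2 d|^2$ pointwise (nor after multiplying by the weight $|\nabla d|^2$ and integrating), so you cannot place $\int_0^T\||\nabla d||\nabla^2 d|\|_{L^2}^2\,dt$ on the left of \eqref{xxxxx0} with your scheme. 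Relatedly, your statement that the cross terms are ``absorbed by Young's inequality into $\||\nabla d||\nabla^2 d|\|_{L^2}^2$'' presupposes that this quantity is available as left-hand dissipation, but your construction never produces it. A second (smaller) issue: your first identity places $+\int|\nabla|\nabla d|^2|^2$ on the right (since $\int|\nabla d|^2 d\cdot\Delta^2 d=\int|\nabla|\nabla d|^2|^2+2\int(\nabla|\nabla d|^2\cdot\nabla d)\cdot\Delta d+\int|\nabla d|^2|\Delta d|^2$), and your route-(a) integration by parts in the $L^4$ identity does not regenerate a matching $\int|\nabla|\nabla d|^2|^2$; so after combining, $-\int|\nabla|\nabla d|^2|^2$ survives on the left with the wrong sign and must be controlled separately — this can be fixed (e.g.\ by $\int|\nabla|\nabla d|^2|^2\le C\|\nabla d\|_{L^2}\|\nabla^2 d\|_{L^2}\|\nabla^3 d\|_{L^2}^2$ and the smallness hypothesis), but you do not address it. The first gap, however, is structural: to obtain $\||\nabla d||\nabla^2 d|\|_{L^2}^2$ as genuine dissipation you would need to integrate by parts $\int|\nabla d|^2\nabla d:\nabla\Delta d$ the way the paper does (peeling off the outer derivative of the third derivative), in which case the Laplacian never gets contracted and you land on $|\nabla^2 d|^2$ rather than $|\Delta d|^2$. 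Your fluid-coupling estimates and Gagliardo--Nirenberg interpolations are correct and match the paper's conclusion, so once the dissipation structure is fixed the rest of your plan goes through.
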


\begin{proof}
Applying the operator $\nabla$ to \eqref{1}$_{4}$, we have
\begin{align}\label{xxxxx1}
\nabla d_{t}-\nabla \Delta d=-\nabla(u \cdot \nabla d)+ \nabla (  |\nabla d |^{2} d ).
\end{align}
A straightforward calculation shows that
\begin{align}\label{xxxxx2}
	& \frac{d}{d t} \|\nabla^{2} d \|_{L^{2}}^{2}+\|\nabla d_{t} \|_{L^{2}}^{2}+ \|\nabla^{3} d \|_{L^{2}}^{2} \notag\\
	&=  \int |\nabla d_{t}-\nabla \Delta d |^{2} \, d x \notag\\
	&=  \int |\nabla(u \cdot \nabla d)- \nabla ( |\nabla d |^{2} d ) |^{2} \, d x  \notag\\
	&\leq C  \||\nabla u| |\nabla d|\|_{L^{2}}^{2}+C\||u|  |\nabla^{2} d |\|_{L^{2}}^{2}+C\||\nabla d|  |\nabla^{2} d |\|_{L^{2}}^{2}+C\|\nabla d \|_{L^{6}}^{6} \notag\\[2mm]
	&\leq C \|\nabla u\|_{L^{2}}^{2}\|\nabla d\|_{L^{\infty}}^{2}+C \|u\|_{L^{6}}^{2} \|\nabla^{2} d \|_{L^{6}} \|\nabla^{2} d \|_{L^{2}}+C \|\nabla d\|_{L^{6}}^{2} \|\nabla^{2} d \|_{L^{3}}^{2}\notag\\[2mm]
	&\quad +C\|\nabla d\|_{L^{6}}^{2} \||\nabla d|^{2} \|_{L^{6}} \| \nabla d  \|_{L^{4}}^{2} \notag\\
	&\leq \frac{1}{4}\||\nabla d|| \nabla^{2} d| \|_{L^{2}}^{2}+C \|\nabla d\|_{L^{2}}^{\frac{1}{2}} \|\nabla^{3} d \|_{L^{2}}^{\frac{3}{2}}\|\nabla u\|_{L^{2}}^{2}\notag\\[2mm]
	&\quad  + C\|\nabla^{3} d \|_{L^{2}}\|\nabla^{2} d \|_{L^{2}}^{3} +  C\|\nabla^{2} d \|_{L^{2}}^{4}\|\nabla d\|_{L^{4}}^{4}  \notag\\
	&\leq \frac{1}{4}\big( \|\nabla^{3} d \|_{L^{2}}^{2}+  \||\nabla d||\nabla^{2} d| \|_{L^{2}}^{2}\big)+C\|\nabla d\|_{L^{2}}^{3} \|\nabla^{2} d \|_{L^{2}}^{3} \|\nabla^{3} d \|_{L^{2}}^{2} +C \|\nabla^{2} d \|_{L^{2}}^{6}\notag\\
	&\quad +C(\|\nabla d\|_{L^{2}}^{2}\|\nabla u\|_{L^{2}}^{2}) \|\nabla u\|_{L^{2}}^{6},
\end{align}
where we have used the Gagliardo--Nirenberg inequalities
\begin{align*}
\|\nabla d \|_{L^{\infty}}   \leq C  \| \nabla d \|_{L^{2}}^{\frac{1}{4}} \| \nabla^{3} d \|_{L^{2}}^{\frac{3}{4}},\quad \|\nabla^{2} d \|_{L^{2}}  \leq C  \| \nabla d \|_{L^{2}}^{\frac{1}{2}}  \| \nabla^{3} d \|_{L^{2}}^{\frac{1}{2}}.
\end{align*}
Multiplying \eqref{xxxxx1} by  $4|\nabla d|^{2} \nabla d$ and  integrating the resultant over  $\mathbb{R}^{3}$, we infer that
\begin{align}\label{xxxxx3}
	&\frac{d}{d t} \|\nabla d\|_{L^{4}}^{4} + \int\big(4|\nabla d|^{2} |\nabla^{2} d |^{2}+2 |\nabla(|\nabla d|^{2})|^{2}\big) \, d x \notag\\
	&=  4 \int|\nabla d|^{2} \nabla d \big(-\nabla(u \cdot \nabla d)+\nabla (|\nabla d|^{2} d ) \big) \, d x \notag\\
	&\leq  C \int\big(|\nabla d|^{4}|\nabla u|+|\nabla d|^{3} |\nabla^{2} d ||u|+|\nabla d|^{4} |\nabla^{2} d |+|\nabla d|^{6}\big) \, d x \notag\\
	&\leq \frac{1}{8}  \||\nabla d|  |\nabla^{2} d |\|_{L^{2}}^{2}+C \big(\||\nabla u| |\nabla d|\|_{L^{2}}^{2}+\|\nabla d \|_{L^{6}}^{6}+\||u|  |\nabla^{2} d |\|_{L^{2}}^{2}\big)\notag\\
	&\leq \frac{1}{4}\big( \|\nabla^{3} d \|_{L^{2}}^{2}+  \||\nabla d||\nabla^{2} d| \|_{L^{2}}^{2}\big)+C\|\nabla d\|_{L^{2}}^{3} \|\nabla^{2} d \|_{L^{2}}^{3} \|\nabla^{3} d \|_{L^{2}}^{2} +C(\|\nabla d\|_{L^{2}}^{2}\|\nabla u\|_{L^{2}}^{2}) \|\nabla u\|_{L^{2}}^{6}.
\end{align}

Adding \eqref{xxxxx2} to \eqref{xxxxx3}, we obtain that
\begin{align*}
	& \frac{d}{d t} \big( \|\nabla^{2} d \|_{L^{2}}^{2}+\|\nabla d\|_{L^{4}}^{4}\big)  +  \|\nabla d_{t} \|_{L^{2}}^{2}+ \|\nabla^{3} d \|_{L^{2}}^{2}+\||\nabla d| |\nabla^{2} d |\|_{L^{2}}^{2} \notag\\
	&\leq  C\|\nabla d\|_{L^{2}}^{3} \|\nabla^{2} d \|_{L^{2}}^{3} \|\nabla^{3} d \|_{L^{2}}^{2} +C \|\nabla^{2} d \|_{L^{2}}^{6}  +C(\|\nabla d\|_{L^{2}}^{2}\|\nabla u\|_{L^{2}}^{2}) \|\nabla u\|_{L^{2}}^{6}.
\end{align*}
Integrating the above inequality over $t\in(0,T)$ and choosing $\varepsilon>0$ in \eqref{p1} sufficiently small so that
\begin{align}\label{sm5}
C\sup _{t \in[0, T]}\big(\|\nabla  d(t)\|_{L^{2}}^{2}\|\nabla^{2} d(t)\|_{L^{2}}^{2}\big)^{\frac{3}{2}} \leq \frac{1}{4},
\end{align}
we derive \eqref{xxxxx0}.
\end{proof}

Based on Lemmas \ref{lem3}--\ref{lem5}, we have the following corollary.
\begin{corollary}\label{co4}
Under the hypotheses of Proposition \ref{pro},  it holds that
\begin{align}\label{c9}
	& \sup _{t \in[0, T]}\Big[ \|\nabla u(t)\|_{L^{2}}^{2}+ (\bar{\rho}+1)  \| \sqrt{\rho}\theta(t)\|_{L^{2}}^{2}+\| \nabla^{2} d(t)\|_{L^{2}}^{2}+\|  \nabla d(t)\|_{L^{4}}^{4}\Big] \notag\\
	&\quad\quad +\int_{0}^{T}\left(  \| \sqrt{\rho}\dot{u}\|_{L^{2}}^{2}+  (\bar{\rho}+1)  \| \nabla \theta\|_{L^{2}}^{2}+ \|\nabla d_{t} \|_{L^{2}}^{2}+ \|\nabla^{3} d \|_{L^{2}}^{2}+\||\nabla d|| \nabla^{2} d| \|_{L^{2}}^{2}\right) d t \notag\\[2mm]
	&  \leq C  \left(\|\nabla u_{0}\|_{L^{2}}^{2}+(\bar{\rho}+1) \| \sqrt{\rho_{0}}\theta_{0}\|_{L^{2}}^{2}+\| \nabla^{2} d_{0}\|_{L^{2}}^{2}+\| \nabla d_{0}\|_{L^{4}}^{4}\right)= C S_{0}^{\prime\prime}.
\end{align}
\end{corollary}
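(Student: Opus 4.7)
The plan is to form a weighted linear combination of \eqref{xxx1}, \eqref{xxxx0}, and \eqref{xxxxx0} and then absorb all cross terms on the right using either pointwise comparison with sup-terms already present on the left, or the bootstrap smallness \eqref{p1}. Concretely, I would multiply \eqref{xxx1} by $A(\bar\rho+1)$ for an absolute constant $A$ depending only on the physical parameters, and add \eqref{xxxx0} and \eqref{xxxxx0}. After taking $\sup_{t\in[0,T]}$ on both sides, the left-hand side reproduces exactly the quantity in \eqref{c9}, while the right-hand side splits into $CS_0''$, a few pointwise-in-$t$ terms coming from \eqref{xxxx0}, and integrals weighted by sup-norms of the solution coming from all three lemmas.

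The pointwise nuisance terms are $C\bar\rho\|\sqrt\rho\theta(t)\|_{L^2}^2$ and $C\|\nabla d(t)\|_{L^4}^4$. Choosing $A\ge 2C$ ensures $C\bar\rho\le \tfrac{A}{2}(\bar\rho+1)$, so that $C\bar\rho\sup_t\|\sqrt\rho\theta\|_{L^2}^2$ is absorbed into the $A(\bar\rho+1)\sup_t\|\sqrt\rho\theta\|_{L^2}^2$ produced by Lemma \ref{lem3}; the term $C\sup_t\|\nabla d\|_{L^4}^4$ is absorbed into $\sup_t\|\nabla d\|_{L^4}^4$ coming from Lemma \ref{lem5}. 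The same choice of $A$ takes care of the leading part $C\bar\rho\int\|\nabla\theta\|_{L^2}^2$ of the $\sup K_1(t)\int\|\nabla\theta\|_{L^2}^2$ contribution, since it is dominated by $A(\bar\rho+1)\int\|\nabla\theta\|_{L^2}^2$ on the left.

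For the remaining integral cross terms I would exploit \eqref{p1}: the definitions of $S_t,S_t',S_t''$ force $\sup_t\|\nabla u\|_{L^2}^2$, $\sup_t\|\nabla^2 d\|_{L^2}^2$, $\sup_t\|\nabla d\|_{L^4}^4$, $\sup_t\|\rho\|_{L^3}$, and $(\bar\rho^2+\bar\rho)\sup_t\|\nabla d\|_{L^2}^2$ all to be $O(\varepsilon)$. Consequently the prefactors $\sup(\|\nabla u\|^2\|\rho\|_{L^3})$, $\bar\rho^2\sup(\|\rho\|_{L^3}\|\nabla u\|^2)$, and $\sup(\|\nabla d\|_{L^2}^2\|\nabla u\|_{L^2}^2)$ appearing in \eqref{xxxx0} and \eqref{xxxxx0} are each $O(\varepsilon^{2})$, and can be absorbed respectively into $\int\|\sqrt\rho\dot u\|_{L^2}^2$, $(\bar\rho+1)\int\|\nabla\theta\|_{L^2}^2$, and $\int\|\nabla u\|_{L^2}^2$ on the left-hand side.

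The main obstacle is the complicated sum $K_2(t)$ together with the auxiliary integrals $\int\|\nabla u\|^2\|\nabla^2 d\|^4$, $\int\|\nabla^2 d\|^3\|\nabla^3 d\|$, $\int\|\nabla^2 d\|^7\|\nabla d\|$, and $\int\|\nabla^2 d\|^6$. These I would dispatch summand-by-summand by Young's inequality, splitting each into a product of a sup-norm factor (controlled by \eqref{p1}) and one of the dissipative quantities already present on the left, namely $\int\|\sqrt\rho\dot u\|^2$, $\int\|\nabla\theta\|^2$, $\int\|\nabla d_t\|^2$, $\int\|\nabla^3 d\|^2$, or $\int\||\nabla d||\nabla^2 d|\|^2$. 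For example, $\bar\rho\|\nabla^2 d\|_{L^2}^3\|\nabla^3 d\|_{L^2}\le \bar\rho^2\sup_t\|\nabla^2 d\|_{L^2}^4\cdot\|\nabla^2 d\|_{L^2}^2+\tfrac14\|\nabla^3 d\|_{L^2}^2$ and the sup-factor is $O(\varepsilon^{2})$ times $\bar\rho^2$, which still becomes small thanks to the factor $\bar\rho^2+\bar\rho$ in $S_t'$. Each resulting smallness requirement is added to the list \eqref{smm1}--\eqref{sm5} that defines $\varepsilon_2$ in Proposition \ref{pro}. Once all cross terms are absorbed, what remains on the right is precisely $CS_0''$, proving \eqref{c9}.
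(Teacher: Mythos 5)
Your overall strategy — forming a weighted linear combination of \eqref{xxx1}, \eqref{xxxx0}, and \eqref{xxxxx0}, absorbing the pointwise nuisance terms by tilting the weight on the temperature lemma by $\bar\rho+1$, and dispatching the remaining cross terms via Young's inequality and the bootstrap hypothesis \eqref{p1} — is precisely what the paper does, though the paper organizes it in two stages (combining \eqref{xxx1} with \eqref{xxxxx0} into \eqref{xxx12}, then multiplying \eqref{xxx1} by $\bar\rho$ to get \eqref{c5} and substituting both into \eqref{xxxx0}) rather than in a single shot. The choice of the weight $A(\bar\rho+1)$ on \eqref{xxx1} and the summand-by-summand Young treatment of $K_2$ and the other integral remainders reproduce the content of the smallness conditions \eqref{sm11}, \eqref{sm7}, and \eqref{sm9}. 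So you have the right route.

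There is one genuine gap in the reasoning, though. You write that \eqref{p1} "forces $\sup_t\|\nabla u\|_{L^2}^2$, $\sup_t\|\nabla^2 d\|_{L^2}^2$, $\sup_t\|\nabla d\|_{L^4}^4$, $\sup_t\|\rho\|_{L^3}$, and $(\bar\rho^2+\bar\rho)\sup_t\|\nabla d\|_{L^2}^2$ all to be $O(\varepsilon)$," and deduce from this that the prefactors such as $\sup(\|\nabla u\|^2\|\rho\|_{L^3})$ are $O(\varepsilon^2)$. This is not justified: $\mathcal{B}(t)\le 2\varepsilon$ controls only the \emph{products} $S_t=(1+\bar\rho+\tfrac1{\bar\rho})S_t'S_t''$, $(\bar\rho^2+\bar\rho)\sup\|\rho\|_{L^3}\int\|\nabla\theta\|^2$, and $\sup\bigl(\|\nabla u\|^2+(1+\bar\rho+\bar\rho^2)\|\nabla^2 d\|^2\bigr)\int(\|\nabla u\|^2+\|\nabla^2 d\|^2)$, and smallness of $S_t'S_t''$ does not imply smallness of either factor individually. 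The correct — and weaker — statement is that each of the prefactors you need is a product of the form (one factor of $S_t'$)$\times$(one factor of $S_t''$), up to $\bar\rho$ weights absorbed by the $1+\bar\rho+\tfrac1{\bar\rho}$ normalization, and is therefore $O(\varepsilon)$, not $O(\varepsilon^2)$. Concretely, $\|\nabla u\|^2\|\rho\|_{L^3}\le S_t'S_t''\le S_t\le 2\varepsilon$, $(\bar\rho^2+\bar\rho)\|\nabla d\|^2\|\nabla^2 d\|^2\le S_t'S_t''\le 2\varepsilon$, etc.; the individual factors are never needed. Since $O(\varepsilon)$ is still small enough to close every absorption you list, the conclusion survives, but the intermediate chain as written is false, and it is exactly this product structure of $\mathcal{B}$ (together with the $\bar\rho$-balancing built into $S_t$) that makes conditions \eqref{sm7}, \eqref{sm9}, \eqref{sm11} achievable for all $\bar\rho>0$. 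You should replace the claim about individual sup-norms by the direct product bounds, matching the paper's formulation.
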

\begin{proof}
Combining \eqref{xxx1} and \eqref{xxxxx0}, we have
\begin{align}\label{xxx11}
	& \sup _{t \in[0, T]}\Big[\|\sqrt\rho \theta(t)\|_{L^{2}}^{2}+\|\nabla^{2} d(t)\|_{L^{2}}^{2}+\|\nabla  d(t)\|_{L^{4}}^{4}\Big] +\int_{0}^{T}\big( \|\nabla  d_{t} \|_{L^{2}}^{2}+ \|\nabla^{3} d \|_{L^{2}}^{2}+\||\nabla  d||\nabla^{2} d| \|_{L^{2}}^{2}+\|\nabla \theta\|_{L^{2}}^{2}\big) \, d t \notag\\
	&\leq C\big( \|\sqrt{\rho_{0}} \theta_{0}\|_{L^{2}}^{2}+\|\nabla^{2}  d_{0} \|_{L^{2}}^{2}+ \|\nabla  d_{0} \|_{L^{4}}^{4}\big)+C \sup _{t \in[0, T]}\big(\|\nabla u(t)\|_{L^{2}}^{2}\|\rho(t)\|_{L^{3}}\big) \int_{0}^{T}\|\sqrt{\rho} \dot{u}\|_{L^{2}}^{2}\, d t\notag\\
	&\quad  +C\int_{0}^{T}\|\nabla u\|_{L^{2}}^{2}\|\nabla^{2} d\|_{L^{2}}^{4}\, dt +C \int_{0}^{T}\|\nabla^{2} d \|_{L^{2}}^{3} \|\nabla^{3} d \|_{L^{2}}\, dt+C \int_{0}^{T}\|\nabla^{2} d \|_{L^{2}}^{7}\|\nabla d\|_{L^{2}}\, dt\notag\\
	&\quad +C \sup _{t \in[0, T]}(\|\nabla d(t)\|_{L^{2}}^{2}\|\nabla u(t)\|_{L^{2}}^{2}) \int_{0}^{T}\|\nabla u\|_{L^{2}}^{6}\, d t+C\int_{0}^{T} \|\nabla^{2} d \|_{L^{2}}^{6}\, d t\notag\\
	&\leq C\big( \|\sqrt{\rho_{0}} \theta_{0}\|_{L^{2}}^{2}+\|\nabla^{2}  d_{0} \|_{L^{2}}^{2}+ \|\nabla  d_{0} \|_{L^{4}}^{4}\big)+C \sup _{t \in[0, T]}\big(\|\nabla u(t)\|_{L^{2}}^{2}\|\rho(t)\|_{L^{3}}\big) \int_{0}^{T}\|\sqrt{\rho} \dot{u}\|_{L^{2}}^{2} \, d t\notag\\
	&\quad +  C\sup _{t \in[0, T]}\|\nabla^{2} d(t)\|_{L^{2}}^{2}\int_{0}^{T}\|\nabla u\|_{L^{2}}^{2} \, dt\sup _{t \in[0, T]}\|\nabla^{2} d(t)\|_{L^{2}}^{2}\notag\\
	&\quad +\frac{1}{2}  \int_{0}^{T}\|\nabla^{3} d \|_{L^{2}}^{2} \, dt + C \sup _{t \in[0, T]} \|\nabla^{2} d(t) \|_{L^{2}}^{2} \int_{0}^{T}\|\nabla^{2} d \|_{L^{2}}^{2} \, dt\sup _{t \in[0, T]} \|\nabla^{2} d(t) \|_{L^{2}}^{2}\notag\\
	&\quad+C\sup _{t \in[0, T]}  (\|\nabla^{2} d(t)\|_{L^{2}} \|\nabla d(t)\|_{L^{2}}) \sup _{t \in[0, T]}\|\nabla^{2} d(t)\|_{L^{2}}^{2}\int_{0}^{T}\|\nabla^{2} d\|_{L^{2}}^{2}\, d t\sup _{t \in[0, T]}\|\nabla^{2} d(t)\|_{L^{2}}^{2}\notag\\
	&\quad+C\sup _{t \in[0, T]}(\|\nabla d(t)\|_{L^{2}}^{2}\|\nabla u(t)\|_{L^{2}}^{2}) \sup _{t \in[0, T]}\|\nabla u(t)\|_{L^{2}}^{2}\int_{0}^{T}\|\nabla u\|_{L^{2}}^{2}\, d t\sup _{t \in[0, T]}\|\nabla u(t)\|_{L^{2}}^{2},
\end{align}
where we have used
\begin{align*}
	&C\int_{0}^{T}\|\nabla^{2} d \|_{L^{2}}^{3} \|\nabla^{3} d \|_{L^{2}}\, dt\\
	&\leq C \bigg(\int_{0}^{T}\|\nabla^{2} d \|_{L^{2}}^{6} \, dt\bigg)^{\frac{1}{2}}\bigg(\int_{0}^{T}\|\nabla^{3} d \|_{L^{2}}^{2} \, dt\bigg)^{\frac{1}{2}}\\
	&\leq \frac{1}{2}  \int_{0}^{T}\|\nabla^{3} d \|_{L^{2}}^{2} \, dt + C\int_{0}^{T}\|\nabla^{2} d \|_{L^{2}}^{6} \, dt\\
	&\leq \frac{1}{2}  \int_{0}^{T}\|\nabla^{3} d \|_{L^{2}}^{2} \, dt + C \sup _{t \in[0, T]} \|\nabla^{2} d(t) \|_{L^{2}}^{2} \int_{0}^{T}\|\nabla^{2} d \|_{L^{2}}^{2} \, dt\sup _{t \in[0, T]} \|\nabla^{2} d(t) \|_{L^{2}}^{2}
\end{align*}
and
\begin{align*}	
	&C\int_{0}^{T}\|\nabla^{2} d \|_{L^{2}}^{7}\|\nabla d\|_{L^{2}}\, dt\\
	 &\leq C\sup _{t \in[0, T]}  \big[(\|\nabla^{2} d(t)\|_{L^{2}} \|\nabla d(t)\|_{L^{2}}) \|\nabla^{2} d(t)\|_{L^{2}}^{4}\big] \int_{0}^{T}\|\nabla^{2} d\|_{L^{2}}^{2}\, d t\\
	&\leq C\sup _{t \in[0, T]}  (\|\nabla^{2} d(t)\|_{L^{2}} \|\nabla d(t)\|_{L^{2}}) \sup _{t \in[0, T]}\|\nabla^{2} d(t)\|_{L^{2}}^{2}\int_{0}^{T}\|\nabla^{2} d\|_{L^{2}}^{2}\, d t\sup _{t \in[0, T]}\|\nabla^{2} d(t)\|_{L^{2}}^{2}.
\end{align*}
Choose $\varepsilon>0$ in \eqref{p1} sufficiently small so that
\begin{align}\label{sm11}
	C\max\Big\{ & \sup _{t \in[0, T]} \|\nabla^{2} d(t)\|_{L^{2}}^{2}\int_{0}^{T}\|\nabla u\|_{L^{2}}^{2} \, dt,~ \sup _{t \in[0, T]} \|\nabla^{2} d(t) \|_{L^{2}}^{2} \int_{0}^{T}\|\nabla^{2} d \|_{L^{2}}^{2} \, dt,~\sup _{t \in[0, T]} (\|\nabla  d(t)\|_{L^{2}} \|\nabla^{2}  d(t)\|_{L^{2}}), \notag\\
	& \sup _{t \in[0, T]} \|\nabla u(t)\|_{L^{2}}^{2}\int_{0}^{T}\|\nabla u\|_{L^{2}}^{2}\, d t, ~\sup _{t \in[0, T]} (\|\nabla d(t)\|_{L^{2}}^{2}\|\nabla u(t)\|_{L^{2}}^{2}) \Big\}  \leq \frac{1}{16},
\end{align}
which combined with \eqref{xxx11} implies that
\begin{align}\label{xxx12}
	& \sup _{t \in[0, T]}\Big[\|\sqrt\rho \theta(t)\|_{L^{2}}^{2}+\|\nabla^{2} d(t)\|_{L^{2}}^{2}+\|\nabla  d(t)\|_{L^{4}}^{4}\Big] +\int_{0}^{T}\big( \|\nabla  d_{t} \|_{L^{2}}^{2}+ \|\nabla^{3} d \|_{L^{2}}^{2}+\||\nabla  d||\nabla^{2} d| \|_{L^{2}}^{2}+\|\nabla \theta\|_{L^{2}}^{2}\big) \, d t\notag\\
	&\leq  C\big( \|\sqrt{\rho_{0}} \theta_{0}\|_{L^{2}}^{2}+\|\nabla^{2}  d_{0} \|_{L^{2}}^{2}+ \|\nabla  d_{0} \|_{L^{4}}^{4}\big)+C \sup _{t \in[0, T]}\big(\|\nabla u(t)\|_{L^{2}}^{2}\|\rho(t)\|_{L^{3}}\big) \int_{0}^{T}\|\sqrt{\rho} \dot{u}\|_{L^{2}}^{2} \, d t.
\end{align}
Multiplying \eqref{xxx1} by $\bar{\rho}$ gives that
\begin{align}\label{c5}
	&\bar{\rho}\sup _{t \in[0, T]}\|\sqrt\rho \theta(t)\|_{L^{2}}^{2}+\bar{\rho}\int_{0}^{T}\|\nabla \theta\|_{L^{2}}^{2} \, d t \notag\\
	&\leq C \bar{\rho}\|\sqrt{\rho_{0}}\theta_{0}\|_{L^{2}}^{2}+C\bar{\rho} \sup _{t \in[0, T]}\big(\|\nabla u(t)\|_{L^{2}}^{2}\|\rho(t)\|_{L^{3}}\big) \int_{0}^{T}\|\sqrt{\rho} \dot{u}\|_{L^{2}}^{2}\, d t\notag\\
	&\quad  +C\bar{\rho}\int_{0}^{T}\|\nabla u\|_{L^{2}}^{2}\|\nabla^{2} d\|_{L^{2}}^{4}\, dt +C\bar{\rho} \int_{0}^{T}\|\nabla^{2} d \|_{L^{2}}^{3} \|\nabla^{3} d \|_{L^{2}}\, dt+C\bar{\rho} \int_{0}^{T}\|\nabla^{2} d \|_{L^{2}}^{7}\|\nabla d\|_{L^{2}}\, dt.
\end{align}
Substituting \eqref{xxx12} and \eqref{c5} into \eqref{xxxx0}, we deduce that
\begin{align}\label{c6}
	& \sup _{t \in[0, T]}\Big[\|\nabla u(t)\|_{L^{2}}^{2}+ (\bar{\rho}+1)  \| \sqrt{\rho}\theta(t)\|_{L^{2}}^{2}+\| \nabla^{2} d(t)\|_{L^{2}}^{2}+\|  \nabla d(t)\|_{L^{4}}^{4}\Big] \notag\\
	& \quad +\int_{0}^{T}\left(  \| \sqrt{\rho}\dot{u}\|_{L^{2}}^{2}+  (\bar{\rho}+1)  \| \nabla \theta\|_{L^{2}}^{2}+ \|\nabla d_{t} \|_{L^{2}}^{2}+ \|\nabla^{3} d \|_{L^{2}}^{2}+\||\nabla d|| \nabla^{2} d| \|_{L^{2}}^{2}\right) d t \notag\\[2mm]
	&\leq C \left(\|\nabla u_{0}\|_{L^{2}}^{2}+ (\bar{\rho}+1)  \| \sqrt{\rho_{0}}\theta_{0}\|_{L^{2}}^{2}+\| \nabla^{2} d_{0}\|_{L^{2}}^{2}+\| \nabla d_{0}\|_{L^{4}}^{4}\right)\notag\\[2mm]
	& \quad+C (\bar{\rho}+1)  \sup _{t \in[0, T]}\big(\|\nabla u(t)\|_{L^{2}}^{2}\|\rho(t)\|_{L^{3}}\big) \int_{0}^{T}\|\sqrt{\rho} \dot{u}\|_{L^{2}}^{2} d t\notag\\
	& \quad  +C\bar{\rho}\int_{0}^{T}\|\nabla u\|_{L^{2}}^{2}\|\nabla^{2} d\|_{L^{2}}^{4}\, dt +C\bar{\rho} \int_{0}^{T}\|\nabla^{2} d \|_{L^{2}}^{3} \|\nabla^{3} d \|_{L^{2}}\, dt+C\bar{\rho} \int_{0}^{T}\|\nabla^{2} d \|_{L^{2}}^{7}\|\nabla d\|_{L^{2}}\, dt\notag\\
	&\quad +C \sup _{t \in[0, T]} K_{1}(t)  \int_{0}^{T}\|\nabla \theta\|_{L^{2}}^{2} \, d t+C\int_{0}^{T} K_{2}(t) \, d t\notag\\[2mm]
	&\leq C  \left(\|\nabla u_{0}\|_{L^{2}}^{2}+(\bar{\rho}+1) \| \sqrt{\rho_{0}}\theta_{0}\|_{L^{2}}^{2}+\| \nabla^{2} d_{0}\|_{L^{2}}^{2}+\| \nabla d_{0}\|_{L^{4}}^{4}\right)\notag\\[2mm]
	& \quad +\frac{1}{2}  \int_{0}^{T}\|\nabla^{3} d \|_{L^{2}}^{2}\, dt +C\int_{0}^{T} K_{2}(t) \, d t,
\end{align}
where $\varepsilon>0$ in \eqref{p1} is sufficiently small such that
\begin{align}\label{sm7}
	C\max\Big\{ &(\bar{\rho}+1)  \sup _{t \in[0, T]}(\|\nabla u(t)\|_{L^{2}}^{2}\|\rho(t) \|_{L^{3}}),~~\sup _{t \in[0, T]}(\|\nabla  d(t)\|_{L^{2}} \|\nabla^{2}  d(t)\|_{L^{2}}),\notag\\
	&(\bar{\rho}^{2} +\bar{\rho})\sup _{t \in[0, T]} \|\nabla^{2} d(t) \|_{L^{2}}^{2} \int_{0}^{T}\|\nabla^{2} d \|_{L^{2}}^{2} \, dt\Big\}  \leq \frac{1}{16}.
\end{align}
In addition, noting that
\begin{align*}
	&C\int_{0}^{T}\|\nabla^{2} d\|_{L^{2}}^{\frac{3}{2}}\|\nabla^{3} d\|_{L^{2}}^{\frac{1}{2}}\||\nabla d||\nabla^{2} d|\|_{L^{2}} dt\\
	&\leq C\int_{0}^{T}\|\nabla^{2} d\|_{L^{2}}^{2}\|\nabla^{3} d\|_{L^{2}}^{\frac{3}{2}}\|\nabla  d\|_{L^{2}}^{\frac{1}{2}} \, dt\\
	&\leq \frac{1}{16}\int_{0}^{T}\|\nabla^{3} d\|_{L^{2}}^{2} \, dt+C \int_{0}^{T}\|\nabla d\|_{L^{2}}^{2}\|\nabla^{2} d\|_{L^{2}}^{8}  \, dt\\
	&\leq \frac{1}{16}\int_{0}^{T}\|\nabla^{3} d\|_{L^{2}}^{2} \, dt+C \sup _{t \in[0, T]} (\|\nabla d(t)\|_{L^{2}} \|\nabla^{2} d(t)\|_{L^{2}} )   \int_{0}^{T}\|\nabla d\|_{L^{2}} \|\nabla^{2} d\|_{L^{2}}^{7}  \, dt,
\end{align*}
\begin{align*}
	&C\int_{0}^{T} \|\nabla \theta\|_{L^{2}}\||\nabla d||\nabla^{2} d|\|_{L^{2}}\, dt\\
	&\leq C\int_{0}^{T}\|\nabla \theta\|_{L^{2}}\|\nabla^{2} d\|_{L^{6}} \|\nabla  d\|_{L^{3}} \, dt\\
	&\leq C\int_{0}^{T}\|\nabla \theta\|_{L^{2}}\|\nabla  d\|_{L^{2}}^{\frac{1}{2}}\|\nabla^{2} d\|_{L^{2}}^{\frac{1}{2}} \|\nabla^{3}  d\|_{L^{2}} \, dt\\
	&\leq \frac{1}{16}\int_{0}^{T}\|\nabla \theta\|_{L^{2}}^{2} \, dt+C \sup _{t \in[0, T]} (\|\nabla d(t)\|_{L^{2}} \|\nabla^{2} d(t)\|_{L^{2}}) \int_{0}^{T}  \|\nabla^{3} d\|_{L^{2}}^{2}  \, dt,
\end{align*}
and
\begin{align*}
	&\int_{0}^{T} K_{2}(t)\, dt\\
	&=\int_{0}^{T}\Big(\|\rho\|_{L^{3}}\|\nabla u\|_{L^{2}}^{2}\||\nabla d||\nabla^{2} d|\|_{L^{2}}^{2} + \bar{\rho}\|\nabla \theta\|_{L^{2}}\|\nabla d\|_{L^{2}}^{\frac{1}{2}} \|\nabla^{2} d \|_{L^{2}}^{\frac{1}{2}} \|\nabla d_{t} \|_{L^{2}} +\bar{\rho}\|\nabla  d\|_{L^{2}} \|\nabla^{2} d\|_{L^{2}}\|\nabla  d_{t}\|_{L^{2}}^{2} \\
	&\quad +\bar{\rho}\|\nabla u\|_{L^{2}}^{2}\|\nabla^{2} d\|_{L^{2}}^{4} +\bar{\rho} \|\nabla^{2} d\|_{L^{2}}^{3} \|\nabla^{3} d\|_{L^{2}} +\bar{\rho} \|\nabla^{2} d\|_{L^{2}}^{7} \|\nabla  d\|_{L^{2}}\\
	&\quad+\|\nabla d\|_{L^{2}}^{\frac{1}{2}} \|\nabla^{2} d \|_{L^{2}}^{\frac{1}{2}} \|\nabla d_{t} \|_{L^{2}}\||\nabla d||\nabla^{2} d|\|_{L^{2}}+ \bar{\rho}^{\frac{1}{2}}\| \rho\|_{L^{3}}^{\frac{1}{2}}\|\nabla \theta\|_{L^{2}}\|\nabla u\|_{L^{2}}\||\nabla d||\nabla^{2}d|\|_{L^{2}}   \\
	&\quad +\|\nabla u\|_{L^{2}}\|\nabla^{2}d\|_{L^{2}}^{2}   \||\nabla d||\nabla^{2}  d|\|_{L^{2}}+\|\nabla^{2} d\|_{L^{2}}^{\frac{3}{2}}\|\nabla^{3} d\|_{L^{2}}^{\frac{1}{2}}\||\nabla d||\nabla^{2} d|\|_{L^{2}}  \\
	&\quad +  \|\nabla^{2} d\|_{L^{2}}^{\frac{7}{2}}\|\nabla d\|_{L^{2}}^{\frac{1}{2}}\||\nabla d||\nabla^{2} d|\|_{L^{2}}+\|\nabla \theta\|_{L^{2}}\||\nabla d||\nabla^{2} d|\|_{L^{2}}\Big)\, dt,
\end{align*}
can be absorbed by the left-hand side of \eqref{c6}, provided that $\varepsilon>0$ in \eqref{p1} is taken sufficiently small so that \eqref{sm11}, \eqref{sm7}, and
\begin{align}\label{sm9}
	C\max\Big\{ (\bar{\rho}+1)  \sup_{t\in[0,T]}(\|\nabla^{2} d (t)\|_{L^{2}}\|\nabla d(t)\|_{L^{2}}),~~\sup_{t\in[0,T]}(\|\rho(t) \|_{L^{3}}\|\nabla u(t)\|_{L^{2}}^{2})\Big\}  \leq \frac{1}{16},
\end{align}
are all satisfied. Consequently, \eqref{c9} follows.
\end{proof}

We now present the proof of Proposition \ref{pro}.
\begin{lemma}\label{co1}
Under the hypotheses of Proposition \ref{pro}, it holds that, for any $t \in[0, T]$,
\begin{align}\label{zui}
\mathcal{B}(t) \leq \frac{3}{2} \varepsilon.
\end{align}
\end{lemma}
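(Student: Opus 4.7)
The plan is to combine the two main \emph{a priori} bundles already in hand, namely the $L^{3}$-based estimate of Corollary \ref{co3} (controlling $S_{t}^{\prime}$ and the dissipation weighted by $\bar{\rho}^{2}+\bar{\rho}$) and the higher-order estimate of Corollary \ref{co4} (controlling $S_{t}^{\prime\prime}$, $\int\|\nabla\theta\|_{L^{2}}^{2}$, and $\int\|\nabla^{3}d\|_{L^{2}}^{2}$), so that the whole functional $\mathcal{B}(t)$ is majorized by a universal multiple of $S_{0}$. The key observation is that every factor of $\bar{\rho}^{2}+\bar{\rho}$ (respectively $\bar{\rho}+1$) sitting in front of a time integral cancels against the corresponding weights coming from the supremum terms, leaving in total only the combination $(1+\bar{\rho}+\tfrac{1}{\bar{\rho}})S_{0}^{\prime}S_{0}^{\prime\prime}=S_{0}$.

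Step by step, I would proceed as follows. First, from Corollary \ref{co3} read off $\sup_{t}S_{t}^{\prime}\le CS_{0}^{\prime}$ and
\begin{align*}
\int_{0}^{T}\bigl(\|\nabla u\|_{L^{2}}^{2}+\|\nabla^{2}d\|_{L^{2}}^{2}\bigr)\,dt\le \frac{CS_{0}^{\prime}}{\bar{\rho}^{2}+\bar{\rho}},\qquad
\sup_{t}\|\rho(t)\|_{L^{3}}\le CS_{0}^{\prime}.
\end{align*}
From Corollary \ref{co4} read off $\sup_{t}S_{t}^{\prime\prime}\le CS_{0}^{\prime\prime}$, together with $\sup_{t}\|\nabla u(t)\|_{L^{2}}^{2}\le CS_{0}^{\prime\prime}$, $\sup_{t}\|\nabla^{2}d(t)\|_{L^{2}}^{2}\le CS_{0}^{\prime\prime}$, and
\begin{align*}
\int_{0}^{T}\|\nabla\theta\|_{L^{2}}^{2}\,dt\le \frac{CS_{0}^{\prime\prime}}{\bar{\rho}+1}.
\end{align*}
Multiplying the bounds for $S_{t}^{\prime}$ and $S_{t}^{\prime\prime}$ and including the factor $1+\bar{\rho}+\tfrac{1}{\bar{\rho}}$ yields $\sup_{t}S_{t}\le CS_{0}$.

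Next, insert the above estimates into the two remaining pieces of $\mathcal{B}(t)$. For the temperature-density cross term,
\begin{align*}
(\bar{\rho}^{2}+\bar{\rho})\int_{0}^{T}\|\nabla\theta\|_{L^{2}}^{2}\,dt\,\sup_{t}\|\rho(t)\|_{L^{3}}
\le C\,\frac{\bar{\rho}^{2}+\bar{\rho}}{\bar{\rho}+1}\,S_{0}^{\prime}S_{0}^{\prime\prime}
= C\bar{\rho}\,S_{0}^{\prime}S_{0}^{\prime\prime}
\le C\bigl(1+\bar{\rho}+\tfrac{1}{\bar{\rho}}\bigr)S_{0}^{\prime}S_{0}^{\prime\prime}=CS_{0}.
\end{align*}
For the velocity-director cross term, the same algebra gives
\begin{align*}
\sup_{t}\bigl[\|\nabla u\|_{L^{2}}^{2}+(1+\bar{\rho}+\bar{\rho}^{2})\|\nabla^{2}d\|_{L^{2}}^{2}\bigr]\int_{0}^{T}\bigl(\|\nabla u\|_{L^{2}}^{2}+\|\nabla^{2}d\|_{L^{2}}^{2}\bigr)\,dt
\le C\,\frac{1+\bar{\rho}+\bar{\rho}^{2}}{\bar{\rho}^{2}+\bar{\rho}}\,S_{0}^{\prime}S_{0}^{\prime\prime}\le CS_{0},
\end{align*}
because $\tfrac{1+\bar{\rho}+\bar{\rho}^{2}}{\bar{\rho}(\bar{\rho}+1)}\le \tfrac{1+\bar{\rho}+\bar{\rho}^{2}}{\bar{\rho}}=1+\bar{\rho}+\tfrac{1}{\bar{\rho}}$. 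Adding the three contributions gives $\mathcal{B}(t)\le CS_{0}\le C\varepsilon_{0}$ for a universal constant $C$.

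Finally, choose $C_{1}$ at least as large as $\tfrac{2C}{3}$, so that the bootstrap assumption $C_{1}\varepsilon_{0}<\varepsilon$ forces $\mathcal{B}(t)\le C\varepsilon_{0}\le \tfrac{3}{2}C_{1}\varepsilon_{0}<\tfrac{3}{2}\varepsilon$, yielding \eqref{zui}. There is no serious obstacle in this step; the only subtle point—and the reason the factor $1+\bar{\rho}+\tfrac{1}{\bar{\rho}}$ was inserted into $S_{0}$ in the first place—is the bookkeeping of $\bar{\rho}$-weights, which must be done carefully so that the ratios $(\bar{\rho}^{2}+\bar{\rho})/(\bar{\rho}+1)$ and $(1+\bar{\rho}+\bar{\rho}^{2})/(\bar{\rho}^{2}+\bar{\rho})$ are both absorbed by $1+\bar{\rho}+\tfrac{1}{\bar{\rho}}$ and hence do not spoil the scaling-invariant smallness.
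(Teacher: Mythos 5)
Your proposal is correct and takes essentially the same route as the paper: both combine Corollary \ref{co3} and Corollary \ref{co4}, using the $(1+\bar{\rho}+\tfrac{1}{\bar{\rho}})$ prefactor to absorb the mismatched $\bar{\rho}$-weights, then set $C_{1}$ so that $C\varepsilon_{0}\le\tfrac{3}{2}\varepsilon$ closes the bootstrap. You merely spell out the weight-cancellation arithmetic that the paper leaves implicit; the argument is the same.
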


\begin{proof}
Multiplying \eqref{c9} by $\big(1+\bar{\rho}+\frac{1}{\bar{\rho}}\big)$ and combining the resultant with \eqref{c3}, we obtain that
\begin{align*}
\mathcal{B}(t)=&\bigg(1+\bar{\rho}+\frac{1}{\bar{\rho}}\bigg)S_{t}^{\prime} S_{t}^{\prime\prime}+( \bar{\rho}^{2}+\bar{\rho}) \int_{0}^{T}\|\nabla \theta\|_{L^{2}}^{2} d t \sup _{t \in[0, T]}\|\rho(t)\|_{L^{3}}\notag\\
	&\quad + \sup_{t\in[0,T]}\big[\|\nabla u(t)\|_{L^{2}}^{2}+\big(1+\bar{\rho}+\bar{\rho}^{2}\big)\|\nabla^{2} d(t) \|_{L^{2}}^{2}   \big]  \int_{0}^{T}\big(\|\nabla u\|_{L^{2}}^{2} +\|\nabla^{2} d \|_{L^{2}}^{2}\big)\, d t\notag\\
 \leq& C \bigg(1+\bar{\rho}+\frac{1}{\bar{\rho}}\bigg) S_{0}^{\prime} S_{0}^{\prime\prime}= C \varepsilon_{0}\leq \frac{3}{2} \varepsilon,
\end{align*}
provided that $\varepsilon \geq \tfrac{2}{3}C \varepsilon_{0}\triangleq C_{1}\varepsilon_{0}$, as the desired \eqref{zui}.
\end{proof}

\begin{lemma}\label{co2}
Under the hypotheses of Proposition \ref{pro}, it holds that, for any  $(x, t) \in \mathbb{R}^{3} \times[0, T]$,
\begin{align*}
\rho(x, t) \leq \frac{3}{2} \bar{\rho}.
\end{align*}
\end{lemma}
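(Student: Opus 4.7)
The plan is to propagate the density along particle trajectories and convert the desired pointwise bound on $\rho$ into a time-integral bound on $\|F\|_{L^{\infty}}$, where $F \triangleq (2\mu+\lambda)\operatorname{div} u - P$ is the effective viscous flux. Let $X(t,y)$ denote the flow map of $u$, well-defined on $[0,T]$ by the regularity of the strong solution. Along $X(\cdot,y)$, equation \eqref{1}$_{1}$ gives
\begin{equation*}
\frac{d}{dt}\ln \rho(X(t,y),t)=-\operatorname{div} u(X(t,y),t)=-\frac{1}{2\mu+\lambda}\bigl(F+\rho\theta\bigr)(X(t,y),t),
\end{equation*}
and since $\rho\theta\ge 0$, integration yields
\begin{equation*}
\rho(X(t,y),t)\le \bar{\rho}\,\exp\!\left(\frac{1}{2\mu+\lambda}\int_{0}^{T}\|F(\cdot,s)\|_{L^{\infty}}\,ds\right).
\end{equation*}
Because $X(t,\cdot)$ is a homeomorphism of $\mathbb{R}^{3}$, it suffices to prove $\int_{0}^{T}\|F(\cdot,s)\|_{L^{\infty}}\,ds\le (2\mu+\lambda)\ln(3/2)$, possibly after shrinking $\varepsilon_{2}$.

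For the second and main step, I would interpolate $\|F\|_{L^{\infty}}$ by Gagliardo--Nirenberg, for instance $\|F\|_{L^{\infty}}\le C\|F\|_{L^{6}}^{1/2}\|\nabla F\|_{L^{6}}^{1/2}$. The factor $\|F\|_{L^{6}}$ is controlled by Sobolev embedding and \eqref{eq1}, giving $\|F\|_{L^{6}}\le C\|\nabla F\|_{L^{2}}\le C\bigl(\bar{\rho}^{1/2}\|\sqrt{\rho}\dot{u}\|_{L^{2}}+\||\nabla d||\nabla^{2}d|\|_{L^{2}}\bigr)$, which is square-integrable in time by Corollary \ref{co4}. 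The factor $\|\nabla F\|_{L^{6}}$ is obtained by applying elliptic regularity to $\Delta F=\operatorname{div}(\rho\dot{u}+\nabla d\cdot\Delta d)$. Combining these bounds with Hölder in time, the a priori estimates from Lemmas \ref{lem1}--\ref{lem5} and Corollaries \ref{co3}--\ref{co4}, and the bootstrap hypothesis $\mathcal{B}(t)\le 2\varepsilon$, one obtains a bound of the form
\begin{equation*}
\int_{0}^{T}\|F(\cdot,s)\|_{L^{\infty}}\,ds\le C\bigl(1+\bar{\rho}+\tfrac{1}{\bar{\rho}}\bigr)^{\beta}\,\mathcal{B}(T)^{\alpha}
\end{equation*}
for some $\alpha>0$, $\beta\ge 0$; the scaling-invariance of $\mathcal{B}$ is what prevents uncontrolled powers of $\bar{\rho}$ from entering. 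Requiring the right-hand side to be at most $(2\mu+\lambda)\ln(3/2)$ gives the smallness condition \eqref{s4} appearing in Proposition \ref{pro}.

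The main technical obstacle is the control of $\|\nabla F\|_{L^{6}}$: the direct elliptic bound produces $\|\rho\dot{u}\|_{L^{6}}$, which in turn requires a bound on $\|\nabla\dot{u}\|_{L^{2}}$ that has not been established in the preceding lemmas. I would resolve this either by (i) replacing the Gagliardo--Nirenberg splitting above by a finer interpolation among $\|F\|_{L^{6}}$, $\|\nabla F\|_{L^{2}}$, and an intermediate Sobolev norm of $F$ that still falls within the available estimates (using \eqref{eq1}, \eqref{eq2} and Hölder in time), or (ii) employing a Zlotnik-type logarithmic inequality along characteristics in which only weaker norms of $F$ appear. Either route ultimately closes via the smallness of $\mathcal{B}(T)\le 2\varepsilon\le 2\varepsilon_{2}$, yielding $\rho(x,t)\le \tfrac{3}{2}\bar{\rho}$ on $\mathbb{R}^{3}\times[0,T]$ and completing the bootstrap in Proposition \ref{pro}.
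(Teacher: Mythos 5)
Your first step is sound and agrees with the paper (modulo a minor vacuum technicality: the paper perturbs $\rho$ to $\rho^\delta>0$ before taking logarithms, then lets $\delta\to0^{+}$). The real content is the second step, and here there is a genuine gap. You correctly diagnose the obstacle — bounding $\int_{0}^{T}\|F\|_{L^{\infty}}\,ds$ via Gagliardo--Nirenberg pushes you towards $\|\nabla F\|_{L^{p}}$ for some $p>3$, hence towards $\|\rho\dot u\|_{L^{p}}$ and ultimately towards a bound on $\nabla\dot u$ that the preceding lemmas do not provide — but neither of your proposed remedies closes this. Option (i) cannot work as stated: in $\mathbb{R}^{3}$ one has $H^{1}\not\hookrightarrow L^{\infty}$, and any interpolation route to $\|F\|_{L^{\infty}}$ must pass through a $W^{1,p}$-type norm with $p>3$, which is exactly the unavailable quantity. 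Option (ii), a Zlotnik-type logarithmic inequality, likewise does not rescue you here because it still feeds on a bound for $b(t_2)-b(t_1)$ that you cannot obtain from $\int_{0}^{T}\|F\|_{L^{\infty}}\,ds$ alone.

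The paper's key idea, which you miss, is that one should \emph{never} try to bound $\int_{0}^{T}\|F\|_{L^{\infty}}\,ds$. Instead, applying $(-\Delta)^{-1}\operatorname{div}$ to the momentum equation and adding/subtracting $u\cdot\nabla(-\Delta)^{-1}\operatorname{div}(\rho u)$ yields (see \eqref{s2})
\begin{equation*}
F=-\frac{d}{d\tau}\big[(-\Delta)^{-1}\operatorname{div}(\rho u)\big]+[u_{i},R_{ij}](\rho u_{j})-(-\Delta)^{-1}\operatorname{div}\operatorname{div}M(d),
\end{equation*}
where $\frac{d}{d\tau}=\partial_{\tau}+u\cdot\nabla$ is the material derivative and $R_{ij}$ is a Riesz transform. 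Along a characteristic, the first term is an exact time derivative, so integrating $b'(s)=-\frac{1}{2\mu+\lambda}F(X(s),s)$ over $[0,T]$ produces only the boundary contribution $N_{1}\lesssim \sup_{s}\|(-\Delta)^{-1}\operatorname{div}(\rho u)(s)\|_{L^{\infty}}$, which is controlled via the $L^{2}$--$L^{6}$ interpolation \eqref{xu0} and requires no derivative of $\dot u$. The remaining contributions $N_{2}$ and $N_{3}$ involve the commutator $[u_{i},R_{ij}](\rho u_{j})$ — which gains regularity by compensated-compactness/commutator estimates and is bounded through $\|\nabla u\|_{L^{2}}\|\nabla u\|_{L^{6}}$ and then \eqref{eq3} — and the director term, controlled by $\|\nabla^{2}d\|_{L^{2}}\|\nabla^{3}d\|_{L^{2}}$. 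All three pieces are absorbed by the scaling-invariant quantity $\big(1+\bar{\rho}+\tfrac{1}{\bar{\rho}}\big)S_{0}'S_{0}''$, closing the bound $b(T)-b(0)\le\ln\tfrac{3}{2}$ without ever invoking $\|F\|_{L^{\infty}}$ or $\nabla\dot u$. Your proposal, absent this decomposition, would stall precisely at the point you flag.
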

\begin{proof}
Fix $(x, t) \in \mathbb{R}^{3} \times[0, T]$ and a constant $\delta>0$. Define
\begin{align*}
\rho^{\delta}(y, s)=\rho(y, s)+\delta \exp \left\{-\int_{0}^{s} \operatorname{div} u(X(\tau ; x, t), \tau) d \tau\right\}>0,
\end{align*}
where  $X(s ; x, t)$  is the flow map determined by
\begin{align*}
	\begin{cases}
	\frac{d}{d s} X(s ; x, t)=u(X(s ; x, t), s), ~~0 \leq s<t, \\
	X(t ; x, t)=x.
\end{cases}
\end{align*}
It follows from \eqref{1}$_{1}$ that
\begin{align*}
\frac{d}{d s} \ln \rho^{\delta}(X(s ; x, t), s)=-\operatorname{div} u(X(s ; x, t), s).
\end{align*}
Consequently,
\begin{align}\label{xu1}
Y^{\prime}(s)=g(s)+b^{\prime}(s),
\end{align}
where
\begin{align}\label{s1}
	Y(s)=\ln \rho^{\delta}(X(s ; x, t), s), ~~ g(s)=-\frac{P(X(s ; x, t), s)}{2 \mu+\lambda},~~
		b(s)=-\frac{1}{2 \mu+\lambda} \int_{0}^{s} F(X(\tau ; x, t), \tau)  d \tau.
\end{align}

A direct computation shows that
\begin{align}\label{s2}
F(X(\tau ; x, t), \tau)= & -(-\Delta)^{-1} \operatorname{div}\left[(\rho u)_{\tau}+\operatorname{div}(\rho u \otimes u)\right]-(-\Delta)^{-1} \operatorname{div} \operatorname{div} M(d) \notag\\
= & -\left[(-\Delta)^{-1} \operatorname{div}(\rho u)\right]_{\tau}-u \cdot \nabla(-\Delta)^{-1} \operatorname{div}(\rho u) \notag\\
& +u \cdot \nabla(-\Delta)^{-1} \operatorname{div}(\rho u)-(-\Delta)^{-1} \operatorname{div} \operatorname{div}(\rho u \otimes u) -(-\Delta)^{-1} \operatorname{div} \operatorname{div} M(d)\notag\\
=& -\frac{d}{d \tau}\big[(-\Delta)^{-1} \operatorname{div}(\rho u)\big]+ [u_{i}, R_{i j} ] (\rho u_{j} )-(-\Delta)^{-1} \operatorname{div} \operatorname{div}M(d),
\end{align}
where $[u_i,R_{ij}]=u_iR_{ij}-R_{ij}u_i$ and $R_{ij}=\partial_i(-\Delta)^{-1}\partial_j$ denotes the Riesz transform on $\mathbb{R}^3$.
Therefore, \eqref{s1} and \eqref{s2} imply that
\begin{align}\label{s3}
	b(T)-b(0) \leq  & C\sup _{s \in[0, T]}  \|(-\Delta)^{-1} \operatorname{div}(\rho u)(s) \|_{L^{\infty}} +C\int_{0}^{T} \| [u_{i}, R_{i j} ] (\rho u_{j} )\|_{L^{\infty}} \, d \tau\notag\\
	& +C \int_{0}^{T} \|(-\Delta)^{-1} \operatorname{div} \operatorname{div} M(d)  \|_{L^{\infty}} \, d \tau \triangleq\sum_{i=1}^{3} N_{i}.
\end{align}

Using the Gagliardo--Nirenberg and H\"older's inequalities, the commutator estimates,  \eqref{xu0}, \eqref{c9}, and \eqref{c3}, we derive that
\begin{align}
	N_{1}& \leq C \bar{\rho}^{\frac{3}{4}}\sup_{s\in[0,T]}\bigg(\|\sqrt{\rho} u(s)\|_{L^{2}}^{\frac{1}{2}}\|\nabla u(s)\|_{L^{2}}^{\frac{1}{2}}\bigg),\label{xu3}\\
N_{2}&\leq C\int_{0}^{T} \| [u^{i}, R_{i j} ] (\rho u^{j} ) \|_{L^{\infty}} d \tau\notag\\
	 &\leq C\int_{0}^{T}\|[u, R_{i j}](\rho u)\|_{L^{3}}^{\frac{1}{5}}\|\nabla[u, R_{i j}](\rho u)\|_{L^{4}}^{\frac{4}{5}}d \tau \notag\\
	 &\leq C \int_{0}^{T} \bar{\rho}\|\nabla u\|_{L^{2}}\|\nabla u\|_{L^{6}} d \tau \notag\\
	 &\leq  C \int_{0}^{T} \bar{\rho}\|\nabla u\|_{L^{2}}\bigg(\bar{\rho}^{\frac{1}{2}}\|\sqrt{\rho} \dot{u}\|_{L^{2}}+\bar{\rho}\|\nabla \theta\|_{L^{2}}+\||\nabla d||\nabla^{2} d|\|_{L^{2}}\bigg) d \tau \notag\\
	 &\leq   C \bigg(\bar{\rho}^{2}\int_{0}^{T}\|\nabla u\|_{L^{2}}^{2} d \tau\bigg)^{\frac{1}{2}}\bigg(\int_{0}^{T} \bar{\rho}\|\sqrt{\rho} \dot{u}\|_{L^{2}}^{2} d \tau\bigg)^{\frac{1}{2}} \notag\\
	 &\quad+ C \bigg(\bar{\rho}^{2}\int_{0}^{T}\|\nabla u\|_{L^{2}}^{2} d \tau\bigg)^{\frac{1}{2}}\bigg(\bar{\rho}^{2}\int_{0}^{T}\|\nabla \theta\|_{L^{2}}^{2} d \tau\bigg)^{\frac{1}{2}}+ C  \bigg(\bar{\rho}^{2}\int_{0}^{T}\|\nabla u\|_{L^{2}}^{2} d \tau\bigg)^{\frac{1}{2}}\bigg(\int_{0}^{T}\||\nabla d|| \nabla^{2} d |\|_{L^{2}}^{2} d \tau\bigg)^{\frac{1}{2}}\notag\\
	 &\leq C \bigg[\bigg(1+\bar{\rho}+\frac{1}{\bar{\rho}}\bigg) S_{0}^{\prime} S_{0}^{\prime\prime}\bigg]^{\frac{1}{2}}\label{xu4},\\
	N_{3}&\leq C \int_{0}^{T}  \|(-\Delta)^{-1} \operatorname{div} \operatorname{div} M(d)  \|_{L^{\infty}}  d \tau     \notag\\
	&  \leq C \int_{0}^{T} \|(-\Delta)^{-1} \operatorname{div} \operatorname{div} M(d)  \|_{L^{6}}^{\frac{1}{2}}  \|\nabla (-\Delta)^{-1} \operatorname{div} \operatorname{div} M(d)  \|_{L^{6}}^{\frac{1}{2}}   d \tau\notag\\
	&\leq C \int_{0}^{T}\||\nabla d||\nabla^{2} d|\|_{L^{2}}^{\frac{1}{2}}\||\nabla d||\nabla^{2} d|\|_{L^{6}}^{\frac{1}{2}}  d \tau\notag\\
	&\leq C \int_{0}^{T}\|\nabla d\|_{L^{\infty}}\|\nabla^{2} d\|_{L^{2}}^{\frac{1}{2}}\|\nabla^{3} d\|_{L^{2}}^{\frac{1}{2}}  d \tau\notag\\
	&\leq C \int_{0}^{T} \|\nabla^{2} d\|_{L^{2}} \|\nabla^{3} d\|_{L^{2}}   d \tau\notag\\
	 &\leq C \int_{0}^{T} \bar{\rho}^{\frac{1}{2}} \|\nabla^{2} d\|_{L^{2}}  \bar{\rho}^{-\frac{1}{2}}\|\nabla^{3} d \|_{L^{2}} d \tau\notag\\
	&\leq  C  \bigg(\bar{\rho} \int_{0}^{T}\|\nabla^{2} d\|_{L^{2}}^{2} d \tau\bigg)^{\frac{1}{2}} \bigg(\int_{0}^{T}\bar{\rho}^{-1}\|\nabla^{3} d\|_{L^{2}}^{2} d \tau\bigg)^{\frac{1}{2}}\notag\\
	&\leq C \bigg[\bigg(1+\bar{\rho}+\frac{1}{\bar{\rho}}\bigg) S_{0}^{\prime} S_{0}^{\prime\prime}\bigg]^{\frac{1}{2}}\label{xu5}.
\end{align}
Integrating \eqref{xu1} over  $[0, T]$ and using \eqref{s3}--\eqref{xu5}, we obtain that
\begin{align}\label{s4}
	\ln \rho^{\delta}(x, s) \leq & \ln (\bar{\rho}+\delta)+b(T)-b(0) \notag\\
	\leq & \ln (\bar{\rho}+\delta)+ C \bar{\rho}^{\frac{3}{4}}\bigg(\|\sqrt{\rho} u(s)\|_{L^{2}}^{\frac{1}{2}}\|\nabla u(s)\|_{L^{2}}^{\frac{1}{2}}\bigg)+C \bigg[\bigg(1+\bar{\rho}+\frac{1}{\bar{\rho}}\bigg) S_{0}^{\prime} S_{0}^{\prime\prime}\bigg]^{\frac{1}{2}}\notag\\
	\leq& \ln (\bar{\rho}+\delta) +\ln \frac{3}{2},
\end{align}
provided that  $\varepsilon$ in \eqref{p1} is chosen sufficiently small. Letting $\delta\to0^{+}$, we deduce that
\begin{align*}
\rho \leq \frac{3}{2} \bar{\rho}.
\end{align*}
This completes the proofs of Lemma \ref{co2}.
\end{proof}

Now we are ready to prove Proposition \ref{pro}.
\begin{proof}[Proof of Proposition \ref{pro}]
Proposition \ref{pro} follows from Lemmas \ref{co1} and \ref{co2}.
\end{proof}

\section{Proof of Theorem \ref{thm1}}\label{sec3}
Combining Proposition \ref{pro} with the blow-up criterion \eqref{up} yields the global existence and uniqueness of the strong solution.
Indeed, a uniform upper bound for $\rho$ has been obtained in Lemma \ref{co2}.
Moreover, taking $s_{1}=4$, $r_{1}=6$ and $s_{2}=4$, $r_{2}=6$ in \eqref{up} and using \eqref{c3} and \eqref{c9}, there exists a constant $\bar{C}>0$ independent of $T_{\max}$ such that
\begin{align}\label{zuihou}
	&\sup_{t \in[0, T_{\max}]} \|\rho(t)\|_{L^{\infty}}
	+\int_{0}^{T_{\max}}\|u\|_{L^{6}}^{4}\,dt
	+\int_{0}^{T_{\max}}\|\nabla d\|_{L^{6}}^{4}\,dt \notag\\
	&\le
	\sup_{t \in[0, T_{\max}]}\|\rho(t)\|_{L^{\infty}}
	+C\sup_{t \in[0, T_{\max}]}\|\nabla u(t)\|_{L^{2}}^{2}
	\int_{0}^{T_{\max}}\|\nabla u\|_{L^{2}}^{2}\,dt \notag\\
	&\quad
	+C\sup_{t \in[0, T_{\max}]}\|\nabla^{2} d(t)\|_{L^{2}}^{2}
	\int_{0}^{T_{\max}}\|\nabla^{2} d\|_{L^{2}}^{2}\,dt \notag\\[2mm]
	&\le \bar{C},
\end{align}
which combined with Lemma \ref{local} implies that $T_{\max}=\infty$.

\section*{Conflict of interest}
The authors have no conflicts to disclose.

\section*{Data availability}
No data was used for the research described in the article.


\begin{thebibliography}{99}
\bibitem{C11}
M. S. Cramer, Numerical estimates for the bulk viscosity of ideal gases, {\it Phys. Fluids}, {\bf 24} (2012), Paper No. 066102.

\bibitem{DL}
F. De Anna and C. Liu, Non-isothermal general Ericksen--Leslie system: derivation, analysis and thermodynamic consistency, {\it Arch. Ration. Mech. Anal.}, {\bf231} (2019), 637--717.

\bibitem{E62}
J. Ericksen, Hydrostatic theory of liquid crystals, {\it Arch. Ration. Mech. Anal.}, {\bf9} (1962),  371--378.

\bibitem{FL}
J. Fan and F. Li, Low Mach number limit of a compressible non-isothermal nematic liquid crystals model, {\it Acta Math. Sci. Ser. B (Engl. Ed.)}, {\bf 39} (2019), 449--460.

\bibitem{FLN}
J. Fan, F. Li, and G. Nakamura, Local well-posedness for a compressible non-isothermal model for nematic liquid crystals, {\it J. Math. Phys.}, {\bf 59} (2018), Paper No.  031503.

\bibitem{FFRS}
E. Feireisl, M. Fr{\'e}mond, E. Rocca, and G. Schimperna, A new approach to non-isothermal models for nematic liquid crystals, {\it Arch. Ration. Mech. Anal.}, {\bf205} (2012), 651--672.

\bibitem{FRS}
E. Feireisl, E. Rocca, and G. Schimperna, On a non-isothermal model for nematic liquid crystals, {\it Nonlinearity}, {\bf 24} (2011), 243--257.

\bibitem{FRSZ14}
E. Feireisl, E. Rocca, G. Schimperna, and A. Zarnescu, Evolution of non-isothermal Landau-de
Gennes nematic liquid crystals flows with singular potential, {\it Commun. Math. Sci.}, {\bf 12} (2014), 317--343.

\bibitem{F}
F. Frank, On the theory of liquid crystals, {\it Discuss. Faraday Soc.}, {\bf 25} (1958), 19--28.

\bibitem{GA}
R. Graves and B. Argrow, Bulk viscosity: past to present, {\it J. Thermophys. Heat Transf.}, {\bf 13}  (1999), 337--342.

\bibitem{GXX17}
B. Guo, X. Xi, and B. Xie, Global well-posedness and decay of smooth solutions to the non-isothermal model for compressible nematic liquid crystals, {\it J. Differential Equations},  {\bf 262} (2017), 1413--1460.

\bibitem{HP16}
M. Hieber and J. Pr{\"u}ss, Thermodynamical consistent modeling and analysis of nematic liquid crystal flows,  in {\it Mathematical fluid dynamics, present and future}, 433--459, Springer, Tokyo, 2016.

\bibitem{HP18}
M. Hieber and J. Pr{\"u}ss, Modeling and analysis of the Ericksen-Leslie equations for nematic liquid crystal flows, in {\it Handbook of mathematical analysis in mechanics of viscous fluids}, 1075--1134, Springer, Cham, 2018.

\bibitem{HP19}
M. Hieber and J. Pr{\"u}ss, Dynamics of the Ericksen--Leslie equations with general Leslie stress II: The compressible isotropic case, {\it Arch. Ration. Mech. Anal.}, {\bf233} (2019), 1441--1468.

\bibitem{L68}
F. Leslie, Some constitutive equations for liquid crystals, {\it Arch. Ration. Mech. Anal.}, {\bf 28} (1968), 265--283.

\bibitem{L20}
J. Li, Global small solutions of heat conductive compressible Navier--Stokes equations with vacuum: smallness on scaling invariant quantity, {\it Arch. Ration. Mech. Anal.}, {\bf 237} (2020), 899--919.

\bibitem{LT23}
J. Li and Q. Tao, Global small solutions to heat conductive compressible nematic liquid crystal system: smallness on a scaling invariant quantity, {\it Commun. Math. Sci.}, {\bf 21} (2023), 1455--1486.

\bibitem{L89}
F. Lin, Nonlinear theory of defects in nematic liquid crystals: phase transition and flow phenomena, {\it Comm. Pure Appl. Math.},  {\bf42} (1989),  789--814.

\bibitem{L95}
F. Lin and C. Liu, Nonparabolic dissipative systems modeling the flow of liquid crystals, {\it Comm. Pure Appl. Math.},  {\bf 48} (1995), 501--537.

\bibitem{LWZ}
Y. Liu, F. Wang, and X. Zhong, Global classical solutions to the compressible non-isothermal biaxial nematic liquid crystal flows with large oscillations and vacuum in 3D exterior domains, {\it J. Nonlinear Sci.}, {\bf 35} (2025), Paper No. 40.

\bibitem{LZ21}
Y. Liu and X. Zhong, Global well-posedness to the 3D Cauchy problem of compressible non-isothermal nematic liquid crystal flows with vacuum, {\it Nonlinear Anal. Real World Appl.}, {\bf58} (2021), Paper No. 103219.

\bibitem{LZ26}
Y. Liu and X. Zhong, Global well-posedness and large time behavior for compressible non-isothermal nematic liquid crystal flows with vacuum at infinity, accepted by {\it Commun. Math. Sci.}, 2025.

\bibitem{O}
C. Oseen, The theory of liquid crystals, {\it Trans. Faraday Soc.}, {\bf 29} (1933), 883--899.

\bibitem{SZ25}
X. Shou and X. Zhong, A Serrin criterion for three-dimensional compressible non-isothermal nematic liquid crystal flows, {\it Acta Math. Sci. Ser. A (Chinese Ed.)}, {\bf 45} (2025), 1058--1076.

\bibitem{SZsub}
X. Shou and X. Zhong, Global strong solutions for the compressible non-isothermal nematic liquid crystal flows, submitted for publication, 2024.

\bibitem{T06}
T. Tao, {\it Nonlinear dispersive equations. Local and global analysis}, American Mathematical Society, Providence, RI, 2006.

\bibitem{W25}
H. Wen, Global wellposedness of compressible Navier--Stokes equations with vacuum and smallness on scaling invariant quantity in $\mathbb R^3$, {\it Adv. Math.}, {\bf 482} (2025), Paper No. 110628.

\bibitem{Z19}
X. Zhong, Singularity formation to the two-dimensional compressible non-isothermal nematic liquid crystal flows in a bounded domain, {\it J. Differential Equations},  {\bf 267} (2019), 3797--3826.

\bibitem{Z22}
X. Zhong, $L^{\infty}$ continuation principle to the compressible non-isothermal nematic liquid crystal flows with zero heat conduction and vacuum, {\it Calc. Var. Partial Differential Equations},  {\bf 61} (2022), Paper No. 174.

\end{thebibliography}
\end{document}